\documentclass[reqno]{amsart}
\usepackage{DocumentPreamble}

\AddTitle{On the satisfaction frequency of spectral characterization conditions}
\AddAuthor{Nikita Lvov\textsuperscript{$\diamond$}}
\address{$\diamond$ McGill University, Canada}
\email{\href{mailto:nikita.lvov@mail.mcgill.ca}{nikita.lvov@mail.mcgill.ca}}
\AddAuthor{Alexander Van Werde\textsuperscript{$\ast$}}
\address{$\ast$ University of Münster, Germany}
\email{\href{mailto:a.van.werde@uni-muenster.de}{a.van.werde@uni-muenster.de}}

\AddKeyword{cospectral}
\AddKeyword{cokernel}
\AddKeyword{walk matrix}
\AddKeyword{discriminant}
\AddKeyword{symmetric integer matrix} 
\AddKeyword{profinite model}

\AddMSC{05C50}
\AddMSC{05C80}
\AddMSC{60B20}
\AddMSC{15B52}

\begin{document}
\begin{abstract}    
    We give the first specific conjectures on how frequently graphs satisfy sufficient conditions for being uniquely characterized by spectral information. 
    These conjectures arise from a theoretical framework that we developed based on abstract-algebraic random matrix statistics.  
    Specifically, we rephrase conditions from the literature in terms of $\bbZ[x]$-modules associated to the adjacency matrix, and study the distribution of those modules in analytically tractable profinite random matrix ensembles. 
    We applied this new method to two distinct conditions. 
    The first requires square-freeness of the determinant of the walk matrix, and the second uses the discriminant of the characteristic polynomial. 
\end{abstract}
\maketitle

\section{Introduction}
It has been known since the 1950s that there exist non-isomorphic graphs whose adjacency matrices have the same characteristic polynomial, called \emph{cospectral mates} \cite{von1957spektren}. 
A fundamental conjecture by Haemers however proposes that such examples should be atypical \cite{haemers2004enumeration,van2003graphs}. 
That is, if one picks a graph on $n$ nodes uniformly at random, then it is believed that the probability of cospectrality tends to zero as $n\to \infty$.
A closely related \emph{fingerprint conjecture} for matrices with entries $\pm 1$ was more recently also proposed by Vu \cite{vu2021recent,VU2014combinatorial}.

Haemers' conjecture remains open. 
The current best lower bound on the number of graphs characterized by spectrum is by Koval and Kwan \cite{koval2024exponentially} who established that at least $\exp(cn)$ graphs on $n$ nodes are characterized by adjacency spectrum for some $c>0$. 
This was a significant improvement on previous bounds of order $\exp(c\sqrt{n})$, but remains a vanishing fraction of all $(1-o(1))2^{n(n-1)/2}/n!\geq \exp(cn^2)$ graphs.
Moreover, it seems likely that an exponential bound is the limit of the constructive methods in \cite{koval2024exponentially} and that even bounds of order $\exp(n^{1+\varepsilon})$ would require  different methods: ``...It is hard to imagine a natural argument that could reconstruct so many different graphs... Instead, it seems that non-constructive methods may be necessary...'' \cite[\S 1.1]{koval2024exponentially}.  

There indeed exist sufficient conditions that can certify that a graph is determined by spectral information without giving an algorithm to reconstruct the graph from its spectrum. 
Such conditions were first discovered by Wang and Xu \cite{wang2006sufficient} for a generalized notion where one is also given the spectrum of the complement graph, and considerable effort has since been devoted to refinements of these sufficient conditions  \cite{wang2017simple,qiu2023smith,wang2021generalized,wang2025haemers,guo2025primary,wang2023improved,wang2016square,chao2025new,yang2024improved,van2025Sufficient}.
(We give examples in \Cref{sec: IntroPredictions}.)

Numerical investigations suggest that such sufficient conditions are applicable with non-vanishing frequency. 
This would be significant considering the expected limits of constructive methods.  
Unfortunately, the current theoretical understanding of the probabilistic behavior of the conditions is also limited.
For instance, while the satisfaction frequencies of the conditions can be roughly estimated numerically, it is not known for any of them what the exact limiting values might be. 

\pagebreak[4]
The present paper takes an initial step towards a probabilistic understanding of spectral characterization condition by developing a connection to the theory of abstract-algebraic random matrix statistics.  
Our approach has two main steps. 
The first is to rephrase the considered condition in terms of an abstract-algebraic object associated to the graph's adjacency matrix, such as certain groups or a modules.
The second step is to study the distribution of those algebraic objects using an analytically tractable profinite random matrix ensemble. 
The ensemble used in this second step incorporates the symmetry constraint on the adjacency matrix that comes from the the graph being undirected, while achieving analytical tractability by taking the entries uniform on a larger space than $\{0,1 \}$.  
In particular, the results achieved in the latter model give theoretical insight on the limiting satisfaction frequency of various conditions; see  \Cref{thm: HaarMainWalk,thm: MainHaarDisc}.    

Our main results require some technical background to motivate and state, which we build up over the span of the paper. 
A notable and easy-to-state consequence is that we find the first specific predictions on the satisfaction frequency of spectral characterization conditions.  
The resulting conjectures are showcased in \Cref{sec: IntroPredictions}. 
We explain the framework that we developed in \Cref{sec: IntroTwoStep} and comment on universality in \Cref{sec: IntroUniversality}.  
The remainder of the paper is outlined in \Cref{sec: IntroOutline}. 

\subsection{Specific predictions for satisfaction frequencies}\label{sec: IntroPredictions}
To demonstrate the flexibility of our approach, we consider two different conditions. 
The first is based on the square-freeness of the determinant of the walk matrix and is the subject of \Cref{conj: WalkMatrixSquareFree}. 
The second uses the discriminant of the characteristic polynomial and is considered in \Cref{conj: Discriminant}. 

\subsubsection{Conditions based on the walk matrix}\label{sec: IntroWalkMatrix}
The following definitions are due to Farrugia \cite{farrugia2019overgraphs} and Qiu, Ji, Mao, and Wang \cite{qiu2023generalized} in a special case; see also \cite{van2025Sufficient,wang2016square} for the general setting used here.

\begin{definition}\label{def: IsomorphicSignedSpectrum}
    Consider an integer vector $\zeta\in \bbZ^n$ and a matrix $\bM\in \bbZ^{n\times n}$ that is symmetric $\bM = \bM^{\T}$.
    Two such pairs $(\bM,\zeta)$ and $(\bN,\eta)$ are \emph{isomorphic up to signed permutation} if there exists a signed permutation $\bS$ such that $\bN = \bS \bM \bS^{\T}$ and $\bS \zeta = \zeta$.
\end{definition}
Here, a \emph{signed permutation} is a matrix of the form $\bS = \bP \bD$ with $\bP$ a permutation and $\bD$ a diagonal matrix with entries $\pm 1$. 
The signs serve to avoid trivial obstructions to spectral characterization for general integer matrices, but are irrelevant if one restricts to the adjacency matrices of a graphs as two nonnegative matrices are related by a signed permutation if and only if they are related by an unsigned permutation. 
Consequently, since permutations corresponds to graph isomorphism, the following notion yields a generalization of adjacency spectral characterization, with some additional information related to the vector $\zeta$:

\begin{definition}\label{def: CharacterizedPhi}
    The \emph{bivariate characteristic polynomial} of the pair $(\bM,\zeta)$ is defined as 
    \begin{align}
        \Phi_{\bM,\zeta}(\lambda, t) \de \det\bigl(\lambda \bI - \bM - t \zeta \zeta^{\T}\bigr),\label{eq:BraveActor}
    \end{align}
    where $\bI$ is the identity.
    The pair $(\bM,\zeta)$ is said to be \emph{characterized by $\Phi$-spectrum} if every $(\bN,\eta)$ with $\Phi_{\bM,\zeta} = \Phi_{\bN, \eta}$ is isomorphic up to signed permutation. 
\end{definition}
The case where $\zeta = \bb1_S$ is the indicator vector of some set $S \subseteq \{1,\ldots,n\}$ and $\bM$ is the adjacency matrix of a graph admits a direct graph-theoretic interpretation.
Then, the differentiating information in $\Phi_{\bM,\zeta}$ can be shown to be equivalent to the pair consisting of the spectrum of $\bM$ and the spectrum of the graph on $n+1$ nodes found by connecting the vertices in $S$ to a new node; see Farrugia  \cite{farrugia2019overgraphs}. 
The special case where $\zeta = (1,\ldots,1)^{\T}$ is the all-ones vector is also known to be equivalent to considering the spectra of the graph and its complement, due to Johnson and Newman \cite{johnson1980note}.

We consider a sufficient condition in \cite[Theorem 2.12]{van2025Sufficient} that is a variant on one by Wang \cite{wang2017simple}.
The \emph{walk matrix} of $(\bM,\zeta)$ is the square matrix $\bW \in \bbZ^{n\times n}$ with columns $\zeta, \bM \zeta, \ldots , \bM^{n-1}\zeta$.
The terminology refers to the fact that if $\zeta = \bb1_S$ is an indicator vector and $\bM$ is the adjacency matrix of a graph, then the $ij$th entry $\bW_{i,j}$ counts walks of length $j-1$ that start in vertex $i$ and end in $S$.
Recall that an integer $d$ is said to be \emph{square-free} if $p^2\nmid d$ for every prime $p$.
\begin{theorem}[{Van Werde \cite{van2025Sufficient}}]\label{thm: WalkSufficientConditionWithLoops}
    Consider $\zeta\in \bbZ^n$ and $\bM\in \bbZ^{n\times n}$ with $\bM = \bM^{\T}$ and suppose that $\det(\bW)$ is square-free. 
    Then, $(\bM,\zeta)$ is characterized by $\Phi$-spectrum up to signed permutation. 
\end{theorem}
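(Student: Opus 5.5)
The plan follows the Wang–Xu scheme, adapted to the bivariate polynomial and to signed permutations.

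\textbf{Step 1: reduce to a rational orthogonal matrix.} Suppose $\Phi_{\bM,\zeta} = \Phi_{\bN,\eta}$. Since $\det(\bW)$ is square-free it is nonzero, so $\bW$ is invertible over $\bbQ$. Expand
\[
\det(\lambda\bI - \bM - t\zeta\zeta^{\T}) = \chi_{\bM}(\lambda)\bigl(1 - t\,\zeta^{\T}(\lambda\bI-\bM)^{-1}\zeta\bigr).
\]
Equality of $\Phi$ then gives equal characteristic polynomials of $\bM$ and $\bN$, and $\zeta^{\T}\bM^k\zeta = \eta^{\T}\bN^k\eta$ for all $k$. Hence $\bW_{\bM,\zeta}^{\T}\bW_{\bM,\zeta} = \bW_{\bN,\eta}^{\T}\bW_{\bN,\eta}$, so $\bW_{\bN,\eta}$ is invertible as well. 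Set $\bQ = \bW_{\bN,\eta}\bW_{\bM,\zeta}^{-1}$. This $\bQ$ is rational and orthogonal, and a standard computation (using Cayley–Hamilton with the common characteristic polynomial) gives $\bQ\bM\bQ^{\T} = \bN$ and $\bQ\zeta = \eta$. It therefore suffices to show that $\bQ$ is a signed permutation.

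\textbf{Step 2: the level of $\bQ$ is one.} Let $\ell$ be the least positive integer with $\ell\bQ$ integral. Then $\ell \mid \det \bW$, because $\bQ = \bW_{\bN,\eta}\,\mathrm{adj}(\bW)/\det\bW$. Suppose some prime $p$ divides $\ell$; since $\det\bW$ is square-free, $p \,\|\, \det\bW$. Write $\bQ = \bar{\bQ}/\ell$ with $\bar{\bQ}$ integral and nonzero mod $p$. Orthogonality gives $\bar{\bQ}^{\T}\bar{\bQ} = \ell^2\bI \equiv 0 \pmod{p^2}$. From $\bQ\bW_{\bM,\zeta} = \bW_{\bN,\eta}$ we get $\bar{\bQ}\bW \equiv 0 \pmod p$, so every row $u$ of $\bar{\bQ}$ satisfies $u^{\T}\bW \equiv 0 \pmod p$ and $u^{\T}u \equiv 0 \pmod{p^2}$. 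Since $p\,\|\,\det\bW$, the left kernel of $\bW$ mod $p$ is one-dimensional, say spanned by $z$. Every row of $\bar{\bQ}$ is then a multiple of $z$ mod $p$, and at least one is nonzero mod $p$.

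The left-kernel condition also gives $z^{\T}\bM^k\zeta \equiv 0$ for $k<n$. One then shows that $z^{\T}\bM$ is again in the left kernel of $\bW$, which forces $z^{\T}\bM \equiv \mu z^{\T}$, so $z$ is an eigenvector mod $p$ with $z^{\T}\zeta \equiv 0$ and $z^{\T}z \equiv 0$. Deriving a contradiction from this is the \textbf{main obstacle}. I would lift $z$ to an integer vector and consider the integral vector $\bW^{\T}z/p$. Using $u^{\T}u \equiv 0 \pmod{p^2}$ together with the square-freeness, i.e.\ the Smith normal form of $\bW$ having last invariant exactly $p$, I would show that $z^{\T}\bW$ not being $0 \bmod p^2$ contradicts the orthogonality relation $\bar{\bQ}^{\T}\bar{\bQ} \equiv 0 \pmod{p^2}$. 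This is where Wang's original argument handles $p=2$ separately using graph-specific parity. Here I would instead exploit the extra freedom of signed permutations, and the fact that $\zeta$ is arbitrary, by working directly with the quadratic form mod $p^2$, possibly passing to $\bbZ_p$ and using that $\bW^{\T}\bW$ has $p$-adic valuation $2$ of its determinant.

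\textbf{Step 3: conclude.} With $\ell = 1$, $\bQ$ is an integral orthogonal matrix. Each of its rows is an integer vector of norm $1$, so it has a single entry $\pm1$. Hence $\bQ$ is a signed permutation. It satisfies $\bN = \bQ\bM\bQ^{\T}$ and $\bQ\zeta = \eta$, so $(\bM,\zeta)$ and $(\bN,\eta)$ are isomorphic up to signed permutation, which proves the theorem.
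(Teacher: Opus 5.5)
Your Steps 1 and 3 are correct. Step 2, however, is the whole content of the theorem, and you have not carried it out. After reducing to the configuration
\begin{itemize}
\item $z$ spans the left kernel of $\bW \bmod p$,
\item $z^{\T}\bM\equiv \mu z^{\T}$ and $z^{\T}\zeta\equiv z^{\T}z\equiv 0$,
\item every row $u$ of $\bar{\bQ}$ satisfies $u^{\T}u\equiv 0 \pmod{p^2}$,
\end{itemize}
you only announce that a contradiction should follow.

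In fact no contradiction can be extracted from the facts you list. None of them uses that $\bN=\bQ\bM\bQ^{\T}$ is an \emph{integer} matrix; you only use that $\bQ\zeta$ and $\bQ\bW_{\bM,\zeta}=\bW_{\bN,\eta}$ are integral. For example, take
\[
\bM=\begin{pmatrix}1&2\\2&-1\end{pmatrix},\quad \zeta=\begin{pmatrix}3\\1\end{pmatrix},\quad \bW=\begin{pmatrix}3&5\\1&5\end{pmatrix},\quad \bQ=\frac15\begin{pmatrix}4&3\\3&-4\end{pmatrix}.
\]
Then $\det\bW=10$ is square-free; take $p=5$. The matrix $\bQ$ is rational orthogonal of level $5$ with $\bQ\zeta=\zeta$, and $\bQ\bW=\begin{pmatrix}3&7\\1&-1\end{pmatrix}$ is integral. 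The rows of $\bar{\bQ}=5\bQ$ have norm $25$ and are multiples of $z=(1,2)^{\T}$ modulo $5$. Moreover $z^{\T}\bM\equiv 0$, $z^{\T}\zeta=z^{\T}z=5$, and $5\parallel\det\bW$. So every ingredient of your Step 2, including the Smith normal form data, is present, yet nothing is contradictory. The only thing that fails is integrality of $\bQ\bM\bQ^{\T}=\frac15\begin{pmatrix}11&2\\2&-11\end{pmatrix}$. In particular, the configuration you call the ``main obstacle'' is not contradictory on its own. Your appeal to signed permutations for $p=2$ is also not an argument, since the signs only enter in Step 3.

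The missing idea is $\bM$-invariance of a second lattice. Set $L\de\bQ^{\T}\bbZ^n$. This is a unimodular lattice with $\zeta\in L$ and $\bM L\subseteq L$; the latter is exactly where $\bN\in\bbZ^{n\times n}$ is used. Let $\Lambda\de\operatorname{Im}(\bW)=\bbZ[\bM]\zeta$ with dual lattice $\Lambda^{\#}$. Both $L$ and $\bbZ^n$ lie between $\Lambda$ and $\Lambda^{\#}$ with index $\lvert\det\bW\rvert$ over $\Lambda$. They therefore give $\bM$-stable subgroups of $\Lambda^{\#}/\Lambda$ that are isotropic for the discriminant form.

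Now $\Lambda\cong\bbZ[x]/(\varphi_{\bM})$, with form $(f,g)\mapsto \zeta^{\T}(fg)(\bM)\zeta$. Since $\operatorname{Hom}_{\bbZ}(\bbZ[x]/(\varphi_{\bM}),\bbZ)$ is free of rank one over $\bbZ[x]/(\varphi_{\bM})$, the group $\Lambda^{\#}/\Lambda$ is a \emph{cyclic} $\bbZ[x]$-module. For $p\parallel\det\bW$ its $p$-part has order $p^2$, and the two lattices give subgroups of order $p$:
\begin{itemize}
\item If the $p$-part is $\bbZ/p^2\bbZ$, it has a unique subgroup of order $p$, so the two subgroups coincide.
\item If it is $\bbF_p^2$, cyclicity prevents $x$ from acting as a scalar. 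So two distinct invariant lines would have distinct eigenvalues, hence be orthogonal by self-adjointness. Two orthogonal isotropic lines would kill the nondegenerate discriminant form, which is impossible.
\end{itemize}
Either way $L\otimes\bbZ_p=\bbZ_p^n$ for every $p$, uniformly including $p=2$. Hence $\bQ$ is integral, and your Step 3 finishes the proof.

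In your rows-of-$\bar{\bQ}$ language, the extra input you need is $u^{\T}f(\bM)u\equiv 0\pmod{p^2}$ for all $f\in\bbZ[x]$. This fails in the example above: $u=(4,3)^{\T}$ gives $u^{\T}\bM u=55$.
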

We would like to understand how often this condition is applicable.
For instance, one natural setting would be to take $\bM$ as the adjacency matrix of a random graph and $\zeta = \bb1_{S}$ as the indicator of some randomly chosen set of vertices. 
Results by O'Rourke and Touri show that $\det(\bW)\neq 0$ with high probability in such settings \cite[Theorems 1.5 \& 3.7]{o2016conjecture}; see also \cite{meehan2019eigenvectors,luh2021eigenvectors} for extensions. 

Unfortunately, \cite{o2016conjecture,meehan2019eigenvectors,luh2021eigenvectors} do not enable predictions for square-freeness.    
The results achieved in \Cref{sec: ConditionWalk} in different random matrix ensembles lead us to the following prediction for the satisfaction frequency of the condition in \Cref{thm: WalkSufficientConditionWithLoops} for graphs with loops:

\begin{conjecture}\label{conj: WalkMatrixSquareFree}
    Suppose that $\bM$ is chosen uniformly at random from all symmetric $n\times n$ matrices with $\{0,1 \}$-valued entries and consider an independent random vector $\zeta$ that is uniform on $\{0,1 \}^n$.   
    Then, for every prime $p$,  
    \begin{align}
        \lim_{n\to \infty}\bbP\bigl(p^2 \nmid \det(\bW)\bigr)  = \Bigl(1 - \frac{1}{p^2} - \frac{1}{p^3} + \frac{1}{p^4}\Bigr) \prod_{k=1}^\infty \Bigl(1 - \frac{1}{p^{2k}}\Bigr) \label{eq: DetWalkPsquare}
    \end{align} 
    Further, the limiting probability of square-freeness factorizes over the constituent primes:
    \begin{align}
        \lim_{n\to \infty}\bbP\bigl(\det(\bW)\textnormal{ is square-free}\bigr) &= \prod_{p\textnormal{ prime}} \Bigl(1 - \frac{1}{p^2} - \frac{1}{p^3} + \frac{1}{p^4}\Bigr) \prod_{k=1}^\infty \Bigl(1 - \frac{1}{p^{2k}}\Bigr) \label{eq: DetWalkSquarefree} \\ 
        &= 0.2943\ldots\nonumber
    \end{align}
\end{conjecture}
One can compare \eqref{eq: DetWalkPsquare} with empirical probabilities in \Cref{tab: WalkRandVec} and observe a good match up to at least the third significant digit.
Let us note that such a good match does not apply for the simpler heuristic probability $1-1/p^2$ that would arise if one considers the probability that $p^2\nmid D$ for a random integer $D$ uniform on $\{1,\ldots, m \}$ for $m$ large.
In fact, even if one only disregards the term $1/p^4$ in \eqref{eq: DetWalkPsquare} then this would already show a discrepancy on the second significant digit since $(1-1/p^2 - 1/p^3)\prod_{k\geq 1} (1-1/p^{2k})$ is approximately $0.430$ and $0.747$ for $p=2$ and $p=3$, respectively.

\begin{table}[h!]
    \begin{center}
    \begin{tabular}{ |c|c|c|c|c|c|c|c|c| }
        \hline
        $p^2\nmid \det(\bW)$&$n=8$&$n =10$&$n=12$&$n=15$&$n = 25$&$n=50$&$n=100$& \Cref{conj: WalkMatrixSquareFree}\\
        \hline
        $p=2$&0.451&0.465&0.470&0.472&0.473&0.473&0.473&0.47336955677\ldots\\
        $p=3$&0.616&0.690&0.731&0.752&0.758&0.757&0.757&0.75752129361\ldots\\
        $p=5$&0.689&0.801&0.867&0.905&0.914&0.914&0.914&0.91393033780\ldots \\
        $p=7$&0.708&0.830&0.904&0.946&0.957&0.956&0.957&0.95674525798\ldots\\
        $p=11$&0.719&0.848&0.926&0.972&0.983&0.983&0.983&0.98279431682\ldots\\ 
        \hline
      \end{tabular}
    \end{center}
    \caption{Estimated probability that $p^2\nmid \det(\bW)$ in the setting of \Cref{conj: WalkMatrixSquareFree}. 
    These estimates used $10^6$ samples and hence have an uncertainty of $\pm 0.0005$ in the sense of standard deviation.}
    \label{tab: WalkRandVec}
\end{table}

To our knowledge, \eqref{eq: DetWalkSquarefree} is the first well-motivated conjecture for the limiting satisfaction frequency for any sufficient condition for spectral characterization.   
A conceptual forerunner is our previous work that studied the walk matrix in a setting without symmetry constraint; see Van Werde \cite{van2025cokernel}.
That setting is substantially simpler because removing the symmetry constraint allows the entries of $\bM$ to all be independent which enables different methods, such as direct analysis of the stochastic process $\zeta, \bM \zeta, \ldots, \bM^{n-1}\zeta$ in the columns of $\bW$. 
However, the predictions in that setting do not match observations in symmetric settings \cite[Section 4]{van2025cokernel} and there are no sufficient conditions for non-symmetric matrices, so that there were no implications for spectral characterization.
A key technical innovation in the current work is that we can incorporate a symmetry constraint.

Moreover, the results in \cite{van2025cokernel} were specific to the walk matrix. 
The perspective developed in the current paper is more flexible. 
For instance, we next illustrate in \Cref{sec: IntroDiscriminant} that we also gain insight on spectral characterization conditions that are initially unrelated to the walk matrix.

\subsubsection{Conditions based on the discriminant}\label{sec: IntroDiscriminant}
Note that symmetric matrices $\bM,\bN$ are cospectral if and only if there exists an orthogonal matrix $\bQ$ such that $\bN = \bQ \bM \bQ^{\T}$. 
In many examples of cospectral graphs, although certainly not all, it further holds that $\bQ$ can be taken to have rational entries.   
For instance, this holds for examples produced by \emph{Godsil--McKay switching} \cite{godsil1982constructing} which exhaustive enumeration has shown to be responsible for a large fraction of cospectral graphs on $n\leq 12$ vertices \cite{haemers2004enumeration}.  
This may motivate the following relaxation of cospectrality:

\begin{definition}\label{def: CospectralThroughRational}
    Two symmetric matrices $\bM,\bN$ are said to be \emph{cospectral through a rational matrix} if there exists a orthogonal matrix $\bQ$ with entries in $\bbQ$ such that $\bN = \bQ \bM \bQ^{\T}$.  
\end{definition}

This notion is closely related to \Cref{def: CharacterizedPhi}. 
Indeed, given a pair $(\bM,\zeta)$ with nonsingular walk matrix $\det(\bW) \neq 0$ it can be shown that $\Phi$-cospectrality to some other pair $(\bN,\eta)$ is equivalent to the existence of a rational orthogonal matrix $\bQ$ with $\bN = \bQ \bM\bQ^{\T}$ and $\eta = \bQ \zeta$; see \cite[Lemma 2.1]{van2025Sufficient}.  
Here, recall that O'Rourke and Touri proved that the walk matrix is indeed nonsingular with high probability  \cite{o2016conjecture}.  
\Cref{def: CospectralThroughRational} hence provides at least as good a proxy for Haemers' conjecture as $\Phi$-cospectrality since the constraint that $\bQ$ has to be compatible with the vectors is now removed.
In particular, a bound on the fraction of graphs that are cospectral through a rational matrix would imply a bound on the fraction that are $\Phi$-cospectral.

We consider a condition from a work of Wang and Yu \cite{wang2016square}. 
The \emph{discriminant} of a monic polynomial $\phi \in \bbZ[x]$ is the integer $\Delta_\phi \in \bbZ$ given by the resultant of $\phi$ and its derivative $\phi'$; see \Cref{def: Discriminant} in \Cref{sec: PreliminariesDiscriminant}. 
If $\lambda_1,\ldots,\lambda_n\in \bbC$ are the roots of $\phi$, then one has the following equivalent expression:
\begin{equation}
    \Delta_\phi = \prod_{i=1}^n  \prod_{j\neq i} (\lambda_i - \lambda_j). \label{eq: DiscriminantRootFormula}
\end{equation}
Let $\Delta_{\bM} \de \Delta_{\varphi_{\bM}}$ denote the discriminant of the characteristic polynomial $\varphi_{\bM}(\lambda) \de \det(\lambda \bI -\bM)$.

\begin{theorem}[Wang and Yu {\cite{wang2016square}}]\label{thm: DiscriminantCondition}
    Consider symmetric integer matrix $\bM$ and suppose that the discriminant $\Delta_{\bM}$ is odd and square free. 
    Then, a symmetric integer $\bN$ is cospectral to $\bM$ through a rational matrix if and only if there exists a signed permutation $\bS$ with $\bN = \bS \bM \bS^{\T}$.  
\end{theorem}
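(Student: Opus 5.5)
The "if" direction is immediate, since a signed permutation is a rational orthogonal matrix. For the converse, suppose $\bN = \bQ\bM\bQ^{\T}$ with $\bQ\in \bbQ^{n\times n}$ orthogonal. Let $\ell$ be the \emph{level} of $\bQ$, i.e.\ the least positive integer with $\ell\bQ$ integral. The plan is to show $\ell = 1$. An integral orthogonal matrix is a signed permutation, because each row is an integer vector of norm $1$, so $\ell=1$ finishes the proof. Arguing by contradiction, we fix a prime $p\mid \ell$ and work with the integer matrix $\bA \de \ell\bQ$. It satisfies $\bA^{\T}\bA = \ell^2\bI$ and $\bA\bM = \bN\bA$.

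\emph{Step 1 (odd $p$).} Reducing modulo $p$ gives $\bar\bA^{\T}\bar\bA = 0$, with $\bar\bA\neq 0$ by minimality of $\ell$. Choose a nonzero column-type vector $u\in \bbF_p^n$ in the column span of $\bar\bA^{\T}$, so that $u^{\T}u=0$. The relation $\bM\bA^{\T} = \bA^{\T}\bN$ shows that the column span $V$ of $\bar\bA^{\T}$ is $\bar\bM$-invariant, and $V$ is totally isotropic. Since $\Delta_\bM\not\equiv 0 \pmod p$, the polynomial $\varphi_\bM$ is squarefree mod $p$, so $\bar\bM$ is semisimple with distinct eigenvalues over $\overline{\bbF}_p$. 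Hence $V$ is a sum of eigenspaces, and in particular it contains a vector $v$, possibly over an extension field, with $\bar\bM v = \mu v$. Because $\bar\bM$ is symmetric, distinct eigenspaces are orthogonal. A one-dimensional eigenspace that is isotropic is therefore orthogonal to every eigenvector, so it lies in the radical of the nondegenerate standard form, which is impossible.

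This does not yet use squarefreeness, so the argument above is too quick. The actual obstruction must come from lifting to $p^2$. I would therefore refine the argument by lifting $v$ to a $p$-adic eigenvector. Hensel's lemma applies since the root $\mu$ is simple mod $p$, giving a $\bbZ_p$ (or unramified extension) eigenvector $\tilde v$ of $\bM$ with $\tilde v^{\T}\tilde v\equiv 0 \pmod p$. The identity $\Delta_\bM = \pm\prod_i \varphi_\bM'(\lambda_i)$, combined with the standard formula $\tilde v^{\T}\tilde v \cdot(\text{unit}) = \varphi'_\bM(\lambda)\cdot(\text{adjugate-related factor})$, shows that an eigenvector of norm $\equiv 0 \pmod p$ forces $p \mid \varphi'_\bM(\lambda)$. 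Pushing the isotropy to order $p^2$, which we obtain from $\bA^{\T}\bA = \ell^2\bI$ (note $p^2\mid \ell^2$), should force $p^2\mid\Delta_\bM$. This contradicts squarefreeness.

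\emph{Step 2 ($p=2$).} Here the argument uses that $\Delta_\bM$ is odd. Then $\varphi_\bM$ is separable mod $2$, and the same eigenvector argument applies. Over characteristic $2$, however, $u^{\T}u = (\sum u_i)^2$, so isotropy means $\bb1^{\T}u=0$. The Hensel/discriminant argument then needs only $2\mid\Delta_\bM$ to reach a contradiction, which explains why oddness rather than mere squarefreeness is required at $p=2$.

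\emph{Main obstacle.} The hard step is the precise link between $p$-adic isotropy of the invariant subspace coming from $\bA$ and the $p$-adic valuation of $\Delta_\bM$. One has to get the factor $p^2$, not just $p$, from $\bA^{\T}\bA=\ell^2\bI$. I would attempt this by working in the étale algebra $\bbZ_p[x]/(\varphi_\bM)$. The key facts are that the trace form there has discriminant $\Delta_\bM$, that $\bbZ_p^n$ becomes a rank-one module over it since $\bar\bM$ is cyclic, and that the standard inner product corresponds to a twisted trace form. A self-dual integral lattice different from $\bbZ_p^n$ but stable under $\bM$, namely the lattice $\bQ^{\T}\bbZ_p^n$, would then require the different ideal to have valuation at least $2$, i.e.\ $p^2\mid\Delta_\bM$.
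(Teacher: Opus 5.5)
\textbf{There is a genuine gap.} The paper only quotes this theorem from Wang and Yu, so your argument has to stand on its own. The gap is in the one nontrivial case: an odd prime $p$ with $p\mid\ell$ and $p\parallel\Delta_{\bM}$.

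\emph{What your argument does prove.} Your first mod-$p$ argument is not ``too quick''. It is a complete proof that $p\mid\ell$ forces $p\mid\Delta_{\bM}$, and since $\Delta_{\bM}$ is odd it already settles $p=2$ with no Hensel step.

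\emph{Why the refinement fails.} For $p\parallel\Delta_{\bM}$, \Cref{prop: OddSquaredDividesDiscriminant} gives $\varphi_{\bM}\equiv(x-a)^2\xi\pmod p$ with $\xi(a)\not\equiv0$ and $p^2\nmid\varphi_{\bM}(a)$. The primary components of $\overline{\bM}$ are mutually orthogonal and nondegenerate, and those belonging to the simple factors are irreducible. Hence $V$ must lie in the two-dimensional $(x-a)$-component. There $\overline{\bM}-a\bI$ is a nonzero map with square zero, and its image is an invariant line that is automatically isotropic. So nothing is contradictory modulo $p$, and the relevant eigenvalue is the double root, not a simple one. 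Hensel's lemma does not apply. In fact the $\bbZ_p$-factor of $\varphi_{\bM}$ lifting $(x-a)^2$ is Eisenstein in $x-a$, so there is no eigenvector over $\bbZ_p$ or any unramified extension.

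Order-$p^2$ isotropy for the standard form alone cannot give the contradiction either. Take a hyperbolic plane with $e_1^{\T}e_1=e_2^{\T}e_2=0$ and $e_1^{\T}e_2=1$. The self-adjoint map $e_1\mapsto pe_2$, $e_2\mapsto e_1$ has determinant $-p$, yet $e_1$ is exactly isotropic and lies in its kernel modulo $p$. Your last paragraph likewise only asserts the needed implication (``would require the different to have valuation at least $2$'').

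\textbf{The missing input} is a second-order congruence for $\bM-a\bI$. From $f(\bM)\mathbf{A}^{\T}=\mathbf{A}^{\T}f(\bN)$ you get $\mathbf{A}f(\bM)\mathbf{A}^{\T}=\ell^2f(\bN)$. So a column $z$ of $\mathbf{A}^{\T}$ that is nonzero mod $p$ satisfies $z^{\T}z\equiv z^{\T}(\bM-a\bI)z\equiv0\pmod{p^2}$.

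\emph{Elementary completion.}
\begin{enumerate}
\item Let $L$ be the rank-two, $\bM$-stable, unimodular orthogonal summand of $\bbZ_p^n$ cut out by that Hensel factor.
\item By the component analysis, $z=z_L+z'$ with $z'\equiv0\pmod p$, $z_L$ primitive in $L$, and $(\bM-a\bI)z_L\equiv0\pmod p$. Both congruences pass to $z_L$.
\item Extend $z_L$ to a basis $z_L,e_2$ of $L$. Unimodularity and $z_L^{\T}z_L\equiv0$ make $z_L^{\T}e_2$ a unit.
\item Write $(\bM-a\bI)z_L=p\alpha z_L+p\beta e_2$. The congruence $z_L^{\T}(\bM-a\bI)z_L\equiv p\beta\,z_L^{\T}e_2\equiv0\pmod{p^2}$ gives $p\mid\beta$.
\item The trace of $(\bM-a\bI)|_L$ is divisible by $p$. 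Together with $p\mid\beta$ this gives $p^2\mid\det\bigl((\bM-a\bI)|_L\bigr)$, hence $p^2\mid\varphi_{\bM}(a)$, a contradiction.
\end{enumerate}

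\emph{Lattice completion.} Your lattice idea can also be made rigorous without differents. Put $E=\bbQ_p[x]/(\varphi_{\bM})$. Up to units, $\Delta_{\bM}=[\mathcal{O}_E:\bbZ_p[x]/(\varphi_{\bM})]^2\operatorname{disc}(\mathcal{O}_E)$, so $p^2\nmid\Delta_{\bM}$ forces $\bbZ_p[x]/(\varphi_{\bM})=\mathcal{O}_E$, a product of DVRs. Then every $\bM$-stable lattice is $\mathfrak{a}\bbZ_p^n$ for a fractional ideal $\mathfrak{a}$, and its dual for the standard form is $\mathfrak{a}^{-1}\bbZ_p^n$. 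Hence $\bbZ_p^n$ is the only $\bM$-stable unimodular lattice. This forces $\bQ^{\T}\bbZ_p^n=\bbZ_p^n$, i.e.\ $p\nmid\ell$, uniformly in $p$.
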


Related conditions for pairs of matrices are given in work of Bhargava, Gross, and Wang \cite[Proposition 35]{bhargava2017positive}, and extensions to algebraic number fields will appear in our forthcoming \cite{vanwerde2026prep}.  

Our results achieved using abstract-algebraic random matrix statistics in \Cref{sec: ResultsDiscriminant} yield the following prediction for the satisfaction frequency of the condition in \Cref{thm: DiscriminantCondition}: 
\begin{conjecture}\label{conj: Discriminant}
    Suppose that $\bM$ is chosen uniformly at random from all symmetric $n\times n$ matrices with $\{0,1 \}$-valued entries. 
    Then, for every prime $p$, 
    \begin{equation}
        \lim_{n\to \infty}\bbP\bigl(p \nmid \Delta_{\bM}\bigr) = \Bigl( 1 - \frac{1}{p}\Bigr) \prod_{k=1}^\infty\Bigl(1 - \frac{1}{p^{2k}}\Bigr). \label{eq: Discriminant}
    \end{equation}
    Further, for every odd prime $p$, 
    \begin{equation}
        \lim_{n\to \infty}\bbP\bigl(p^2 \nmid \Delta_{\bM}\bigr) = \Bigl(1 - \frac{1}{p^2} \frac{3p-1}{p+1}\Bigr)\prod_{k=1}^\infty\Bigl(1 - \frac{1}{p^{2k}}\Bigr) \label{eq:BreezyGem}
    \end{equation}
    Moreover, the limiting probability for being odd and square-free factorizes over the constituent primes:
    \begin{align}
        \lim_{n\to \infty} \bbP\bigl(\Delta_{\bM} \textnormal{ is odd and square-free}\bigr) &= \frac{6}{7} \prod_{p \textnormal{ prime}}\Bigl(1 - \frac{1}{p^2} \frac{3p-1}{p+1}\Bigr)\prod_{k=1}^\infty\Bigl(1 - \frac{1}{p^{2k}}\Bigr) \label{eq:AncientGym}\\ 
        &= 0.1686\ldots \nonumber 
    \end{align}
\end{conjecture}
\Cref{conj: Discriminant} matches empirical evidence presented in \Cref{tab: Disc}. 
For comparison, let us note that a random monic polynomial $\phi \in \bbZ[x]$ with independent and uniform random coefficients is known to satisfy that $p^2 \nmid \Delta_{\phi}$ for odd $p$ with probability $ 1 - p^{-1} + (p-1)^2p^{-2}(p+1)^{-1} $ as $n\to \infty$; see \eg \cite[Section 6]{ash2007equality}. 
That heuristic would not match the evidence in \Cref{tab: Disc}. 
For instance, it would predict that $\bbP(3^2 \nmid \Delta_{\phi}) \approx 0.7777\ldots$ and $\bbP(5^2 \nmid \Delta_{\phi}) \approx 0.9066\ldots$. 

\begin{table}[h!]
    \begin{center}
    \begin{tabular}{ |c|c|c|c|c|c|c|c|c| }
        \hline
        $q\nmid \Delta_{\bM}$&$n=8$&$n =10$&$n=12$&$n=15$&$n = 25$&$n=50$&$n=100$& \Cref{conj: Discriminant}\\
        \hline
        $q = 2$&0.346&0.345&0.345&0.344&0.344&0.345&0.344&0.34426876856... \\
        $q=3^2$&0.628&0.662&0.672&0.679&0.682&0.682&0.682&0.68176916425...\\
        $q=5^2$&0.780&0.821&0.850&0.865&0.868&0.869&0.869&0.86894942632...\\
        $q=7^2$&0.843&0.885&0.909&0.925&0.929&0.929&0.929&0.92921742127...\\
        $q=11^2$&0.885&0.929&0.952&0.966&0.970&0.970&0.970&0.96981231057...\\ 
        \hline
      \end{tabular}
    \end{center}
    \caption{Estimated probability that $q\nmid  \Delta_{\bM}$ in the setting of \Cref{conj: Discriminant} for those values of $q$ that are relevant for the discriminant being odd and square-free. 
    These estimates used $10^6$ samples.}
    \label{tab: Disc}
\end{table}

\subsection{Two-step approach based on abstract-algebraic random matrix statistics}\label{sec: IntroTwoStep}
Let us now explain the theoretical framework that we developed and that resulted in the predicted values in Conjectures \ref{conj: WalkMatrixSquareFree} to \ref{conj: Discriminant}. 
The basic idea is given by the following two-step approach:
\begin{enumerate}[itemsep=0.5em]
    \item 
    A-priori it may appear that the walk matrix and the discriminant are totally different objects whose study would require different methods, but it turns out that they can both be expressed in terms of the structure of the same abstract-algebraic object.  
    Specifically, an integer matrix $\bM \in \bbZ^{n\times n}$  induces a $\bbZ[x]$-module structure on the $n$-dimensional lattice $\bbZ^n$ specified by $xv = \bM v$ for every $v\in \bbZ^n$. 
    As it turns out, both spectral characterization conditions can be rephrased in terms of the structure of this $\bbZ[x]$-module. 
    This is done in \Cref{sec: Rephrasing}. 
    \item
    The question becomes to understand the distribution of the aforementioned module. 
    To this end, we introduce an analytically tractable random matrix ensemble that incorporates key structure from $\bM$ and also induces a $\bbZ[x]$-module. 
    Our model here is the Haar distribution on symmetric matrices with entries in the profinite completion of $\bbZ[x]$. 
    \Cref{sec: ComputationAnalytical} defines this ensemble in detail and gives rigorous results on frequency with which the associated modules satisfy the constraints arising from spectral characterization conditions.

    The reason why this profinite ensemble is analytically tractable is that it enjoys a strong invariance property: if $\bM$ is a matrix from that ensemble and $\bG$ is a fixed invertible matrix over the profinite completion of $\bbZ[x]$, then $\bG\bM\bG^{\T}$ is again symmetric with the same distribution as $\bM$.  
    We crucially exploit this invariance the proof of \Cref{prop: Reduction} to establish a reduction from module-theoretic events associated to the $n\times n$ matrix $\bM$ to events stated using matrices of bounded size. 
    Given that reduction, all relevant probabilities can be deduced from a finite computation. 
\end{enumerate}

We believe that the two-step method itself has significant potential beyond the specific results that we establish in the present paper. 
There are many different sufficient conditions in the literature whose satisfaction frequency remains to be theoretically understood \cite{wang2017simple,qiu2023smith,wang2021generalized,wang2025haemers,guo2025primary,wang2023improved,wang2016square,chao2025new,yang2024improved,van2025Sufficient}.
Simplified analytically tractable models are attractive in this context, as they should enable theoretical insight for many different conditions at a relatively low cost.  
Of course, some adjustments may be necessary.
The appropriate category of abstract-algebraic objects considered in the first step may depend on the condition under consideration, and the selection of analytical tractable model  in the second step also requires some care. 
One has to incorporate enough structure of the original random matrix ensemble that one hopes to gain insight on, but the computation will become more complicated as more structure is retained.

The first step in our approach to rephrase using abstract-algebraic objects arguably has roots dating back all the way to the proofs of the original sufficient conditions of Wang and Xu \cite{wang2006sufficient}. 
Those made use of the Smith normal form of the walk matrix in \cite[Section 3]{wang2006sufficient}, a perspective which has recently also been emphasized in work on refinements of the conditions by Qiu, Wang, and Zhang \cite{qiu2023generalized}. 
The Smith normal form of an integer matrix is equivalent to the Abelian group structure of its cokernel. 
However, \cite{qiu2023generalized,wang2006sufficient} mostly seem to have viewed the Smith normal form as a sequence of integers that provides more detailed information than the determinant. 
That the viewpoint of abstract-algebraic objects and $\bbZ[x]$-module structure is useful in probabilistic results stems from our previous work \cite{van2025cokernel} with walk matrices for directed graphs; recall the discussion after \Cref{conj: WalkMatrixSquareFree}.

The second step of the approach is similar in spirit to the role of Gaussian ensembles in classical random matrix theory that are analytically tractable and enable detailed asymptotics for joint eigenvalue statistics, which can often be rigorously extended to other ensembles like random graphs by a universality argument; see \eg \cite{erdHos2013spectral,anderson2010introduction}.
We are however concerned with arithmetical statistics such as square-freeness and random abstract-algebraic objects. 
Profinite ensembles with Haar distributed entries are the natural analogue for Gaussian ensembles in this arithmetical setting. 

A related story can be found in the literature on \emph{sandpile groups} of random graphs, which are the cokernels of Laplacian matrices. 
Motivated by experimental findings by Clancy, Leake, and Payne \cite{clancy2015note}, those authors together with Kaplan and Wood \cite{clancy2015cohen} established rigorous results in analytically tractable $p$-adic random matrix models. 
The predictions following from those models were later shown to be correct for random graphs by universality arguments; see \eg Wood \cite{wood2017distribution}, Nguyen and Wood \cite{nguyen2025local}, and Hodges \cite{hodges2024distribution}. 
In particular, the results in \cite{clancy2015cohen,wood2017distribution} showed that the symmetry constraint on the Laplacian that comes from the graph being undirected has an important effect on the distribution.  
As we discuss in \Cref{sec: Background}, the profinite completion of $\bbZ$ is closely related to the rings of $p$-adic integers. 
Hence, \cite{clancy2015cohen} can also be interpreted as a profinite symmetric model.

For random $\bbZ[x]$-modules associated to random matrices, all previous works that we are aware of have focused on settings without symmetry constraint. 
In particular, the module structure of the cokernel of $P(\bM)$ when $\bM$ is a random matrix with independent entries and $P\in \bbZ[x]$ is a polynomial has been considered by various authors; see \eg \cite{cheong2023distribution,lee2023joint,cheong2022generalizations,cheong2025cokernel,cheong2023polynomial,shen2026eigenvalues,van2025cokernel}.
A \emph{linearization trick} is often used in that setting to reduce the study of the cokernel of the integer matrix $P(\bM) \in \bbZ^{n\times n}$ to that of the polynomial matrix $\bM - x\bI \in \bbZ[x]^{n\times n}$. 
We also use that trick in our our arguments; see  \Cref{sec: PrelimCokernel} as well as the discussion surrounding \eqref{eq:DampQuote} and \Cref{rem: CokernelDisc}.

The idea of our reduction approach exploiting invariance was sparked by proofs of Evans concerning Markovian structure in the Smith normal form of a random $p$-adic matrix without symmetries \cite{evans2002elementary}.  
After using the method in the current paper, we have also found application of such invariance in cospectrality problems that are not directly related to sufficient conditions, such as the the probability that $\bQ^{\T} \bM \bQ\in \bbZ^{n\times n}$ for a fixed rational matrix $\bQ$ when $\bM$ is a random symmetric integer matrix with sufficiently uniform entries; see Van Werde \cite{van2026exact}.

\subsection{On universality}\label{sec: IntroUniversality}
We believe that the considered statistics should not be too sensitive to the specific distribution of entries, so long as the key structure of the matrix is retained. 
Indeed, this belief is implicit in our approach, as it is what justifies passing to a different random matrix model.

Specifically, we expect that the limiting probabilities \eqref{eq: DetWalkPsquare}, \eqref{eq: Discriminant}, and \eqref{eq:BreezyGem} will be universal so long as the entries of the random matrix $\bM$ are not too concentrated on any residue class, in the sense of  balanced distributions \cite[Definition 1]{wood2019random}. 
Such universality is known to hold in related models; see Van Werde \cite[Theorem 1.3]{van2025cokernel} for cokernels of walk matrices and Cheong and Yu for $\bbZ[x]$-module statistics \cite[Theorem 1.3]{cheong2023distribution}, both without symmetry constraint, and see Wood \cite{wood2017distribution} for sandpile groups in a setting with symmetry. 
A sufficiently strong universality statement would make our conjectures into theorems, as our results for profinite models then identify the limiting law.

We intend to pursue such universality results in future work, but note that significant technical challenges remain to be resolved before a full proof of \eqref{eq: DetWalkSquarefree} and \eqref{eq:AncientGym} could be expected.
For instance, square-freeness depends on infinitely many primes simultaneously, which will likely necessitate separate methods to rule out large primes as certain universality techniques perform best when one only considers a finitely many primes simultaneously; see \eg \cite[Section 1.5]{nguyen2025local} for discussion of such issues in the context of sandpile groups.  
In fact, even \eqref{eq: DetWalkPsquare}, \eqref{eq: Discriminant}, and \eqref{eq:BreezyGem} which only depend on a single integer prime $p$ are deduced in our proofs by a decomposition in terms of the infinitely many maximal ideals of $\bbZ[x]$ that contain $p$, and similar issues may be expected to arise.    
These challenges also remain open in the simpler setting of walk matrices of directed graphs; see \cite[Conjecture 1.4]{van2025cokernel}. 

Let us emphasize that the preceding discussion concerns universality with respect to the distribution of the entries. 
Structural changes to the random matrix, on the other hand, will often change the universality class. 
As was mentioned earlier, the prediction in \eqref{eq: DetWalkPsquare} explicitly differs from our prior results in \cite[Theorem 1.2]{van2025cokernel} for walk matrices in a setting without symmetry constraint. 
That Conjectures \ref{conj: Discriminant} and \ref{conj: WalkMatrixSquareFree} are stated for random graphs with self-loops is also no coincidence: the considered statistics are sensitive to the presence of a diagonal in the matrix. 
For instance, for graphs without self loops, it is known that the discriminant is always even \cite[Section 5]{wang2016square}.

In the context of \Cref{conj: WalkMatrixSquareFree}, one may also wonder what happens if the vector $\zeta$ is taken to be deterministic instead of random. 
The all-ones vector $\zeta=(1,\ldots,1)^{\T}$ is particularly relevant, as this special case of $\Phi$-cospectrality has special historical interest. 
This case dates back to the 1980 work of Johnson and Newman \cite{johnson1980note}, who viewed it as a natural generalization of adjacency cospectrality where the entries $0$ and $1$ in the adjacency matrix are replaced by formal symbols $x,y\in \bbR$.    
It is precisely this notion that was considered in the original conditions by Wang and Xu \cite{wang2006sufficient}.
Remarkably, numerical evidence presented in \Cref{sec: Conclusion} seems to suggest that the prediction in \eqref{eq: DetWalkPsquare} for the probability that $p^2$ divides the determinant is universal in terms of the vector $\zeta$ for every deterministic $\zeta$ whose reduction modulo $p$ is nonzero, \emph{except} for the all-ones vector when $p=2$.

While the aforementioned exceptional phenomena surrounding the prime $2$ go beyond the specific models studied here, they are not fundamentally beyond our two-step framework. 
We intend to pursue the relevant modifications of our framework in a future work.   

\subsection{Outline}\label{sec: IntroOutline}
\Cref{sec: Background} provides background, such as that surrounding profinite completions. 
\Cref{sec: Rephrasing} establishes equivalent phrasings of the conditions from \Cref{thm: WalkSufficientConditionWithLoops,thm: DiscriminantCondition} in terms of $\bbZ[x]$-module structure.
We subsequently study the relevant $\bbZ[x]$-modules in a profinite random matrix ensemble in \Cref{sec: ComputationAnalytical}.
Our main results are \Cref{thm: HaarMainWalk,thm: MainHaarDisc}.
We conclude in \Cref{sec: Conclusion}. 

The subsections concerning the walk matrix and discriminant in \Cref{sec: Rephrasing,sec: ComputationAnalytical} can be read separately. 
While these parts are conceptually related, their logical treatment is mostly self-contained.

\section{Background and notation}\label{sec: Background}
The most natural way to state our subsequent rigorous results relies on concepts from arithmetic statistics and commutative algebra that are not so common in the areas of graph theory where the spectral characterization topic comes from.
We here provide the required background.  

\subsection{Definition and interpretation of cokernels}\label{sec: PrelimCokernel}
In general, if $R$ is a commutative ring and we are given a matrix $\bX \in R^{n\times m}$ then the \emph{cokernel} is the quotient $R$-module that measures to what extent $\bX$ fails to be surjective as an $R$-module morphism onto $R^n$: 
\begin{align}
    \coker(\bX) \de R^n / \operatorname{Im}(\bX) \ \ \text{ where }\ \ \operatorname{Im}(\bX) \de \bigl\{ \bX v: v\in R^m  \bigr\}. \label{eq: Def_Coker} 
\end{align}
Given a matrix $\bZ \in R^{n\times k}$ we denote $\coker(\bX,\, \bZ)$ for the cokernel of the $n\times (m+k)$ rectangular matrix found by concatenating $\bX$ and $\bZ$.
If $z \in R^{n}$ is a vector then we similarly write $\coker(\bX,\, z)$. 

Now introduce a formal symbol $x$ and let $R = \bbZ[x]$.
Given an integer matrix $\bM\in \bbZ^{n\times n}$ it then holds that $\bM - x\bI \in \bbZ[x]^{n\times n}$ where $\bI\in \bbZ^{n\times n}$ is the identity matrix.  
In particular, we can consider the $\bbZ[x]$-module $\coker(\bM -x\bI)$.
To interpret this object, it is instructive to compare with another natural module associated to $\bM$. 
Consider the $\bbZ[x]$-module structure on $\bbZ^n$ induced by the action of the matrix. 
That is, let $Q(x) v \de Q(\bM) v$ for every $Q(x) \in \bbZ[x]$ and $v\in \bbZ^n$.
One then has a natural map $\bbZ^n \to \operatorname{coker}(\bM -x\bI)$
found by composing the embedding $\bbZ^n \to \bbZ[x]^n$ with the quotient map: 
\[\begin{tikzcd}
    {\bbZ^n} && {\bbZ[x]^n} && {\operatorname{coker}(\mathbf{M}-x\mathbf{I})} 
    \arrow[from=1-1, to=1-3]
    \arrow[bend right=20, from=1-1, to=1-5]
    \arrow[from=1-3, to=1-5]
\end{tikzcd}\]
One can check that this composition defines a bijection and is compatible with the $\bbZ[x]$-module structures on $\bbZ^n$ and $\operatorname{coker}(\bM-x\bI)$.  
In other words, the $\bbZ[x]$-modules are isomorphic.

\subsection{Profinite completion and its Haar distribution}\label{sec: ProfiniteCompletion}
Recall that we will take matrix entries from the uniform probability distribution on some larger space than $\{0,1\}$ to achieve analytical tractability. 
One natural idea would be to take $\bbZ$ or $\bbZ[x]$ as this space, but these are infinite countable sets and hence do not admit a uniform probability distribution.
Finite quotients of these rings, on the other hand, do admit a uniform probability distribution and the distributions on different quotients are compatible in the sense that, for example, the uniform law on $\bbZ/20\bbZ$ pushes forward to the uniform law on $\bbZ/10\bbZ$ under the quotient map.
This leads one to the \emph{profinite completion}, which is an object that gathers all finite quotients and comes with a natural probability distribution.

Consider a commutative ring $R$ and let $\cF \de \{I \trianglelefteq R: \# R/I < \infty\}$ be the ideals of finite index. 
For any $I,J \in \cF$ with $I \subseteq J$ let $\varphi_{I,J}: R/I \to R/J$ be the quotient morphism. 
The \emph{profinite completion of $R$} is the inverse limit of this system, which can be concretely realized defined as the subring of $\prod_{I \in \cF} R/I$ consisting of only those sequences that are compatible with the quotient morphisms $\varphi_{I,J}$: 
\begin{align}
    \widehat{R} \de \bigl\{(r_I)_{I\in \cF} \in \textstyle\prod_{I \in \cF} R/I: \varphi_{I,J}(r_I) = r_J ,\ \forall I \subseteq J    \bigr\}.\label{eq: Def_ProfiniteCompletion}
\end{align}
We equip $\widehat{R}$ with the subspace topology from $\prod_{I\in \cF}R/I$ where the latter carries the product topology of the discrete sets $R/I$.   
This makes $(\widehat{R},+)$ into a compact Hausdorff topological group. 
In particular, and this is the crucial fact for our purposes, there exists a unique Haar probability measure on $\widehat{R}$. 
The translation invariance of the latter implies that for every $I\in \cF$ and $S\subseteq  R/I$,
\begin{align}
        \operatorname{Haar}( \{(r_J)_{J\in \cF}: r_I \in S\}) = \frac{\#S}{\#R/I}.\label{eq: Def_HaarProperty} 
\end{align}
In other words, the Haar measure pushes forward to the uniform law on the finite quotients of $R/I$ under the projection. 
This recovers the aforementioned intuition that the profinite completion is the natural object that gathers all the finite quotients and their uniform laws.
We refer to \cite{ribes2017profinite,wilkes2024profinite} for additional background on profinite completions.

\begin{remark}
    Another way to study the properties of a random elements of $\bbZ$ is to consider the limiting behavior of the uniform probability measures $\mu_n$ on $[-n,n]\cap \bbZ$. 
    This perspectives is equivalent to considering the Haar measure on $\widehat{\bbZ}$ for properties that only depends on the residue of the random integer modulo some fixed $k$, due to \eqref{eq: Def_HaarProperty}.  
    What makes the profinite completion particularly convenient, however, is that its Haar distribution remains tractable for properties depending on infinitely many different residues. 
    This is more delicate for $\mu_n$ since the law is far from uniform on $\bbZ/k_n\bbZ$ for large $k_n\gg n$.  
    Analogous comments apply to $\bbZ[x]$ when one considers random polynomials with sufficiently uniform coefficients and diverging degree. 
\end{remark}

\subsubsection{On the special cases \texorpdfstring{$\bbZ$}{Z} and \texorpdfstring{$\bbZ[x]$}{Z[x]}}\label{sec: SpecialCaseProfiniteZ_Z[x]}

Our subsequent results will concern the profinite completions of specific rings, particularly $\bbZ[x]$, and more concrete descriptions can be given.
If one prefers, one could alternatively adopt the following equivalent formulations as the definitions and simply remember from the above why these are natural settings. 

For $R = \bbZ$, it follows from the Chinese remainder theorem that $\widehat{\bbZ} \cong \prod_{\textnormal{primes } p} \bbZ_p$ where $\bbZ_p$ is the ring of \emph{p-adic integers} associated to the prime $p$.
The $p$-adic integers can be identified with formal power series in the variable $p$ with coefficients in $\{0,\ldots,p-1 \}$: 
\begin{equation}
    \bbZ_p \de \bigl\{ \textstyle\sum_{i=0}^\infty c_i p^i : c_i\in\{0,1,\ldots,p-1 \} \bigr\}.\label{eq: Def_Zp} 
\end{equation}
Addition and multiplication is done with overflow on coefficients being carried to higher powers of $p$.
The $p$-adic integers admit a unique Haar probability measure that corresponds to sampling independent uniform random coefficients: it holds for $H_p \sim \operatorname{Haar}(\bbZ_p) $ that
\begin{align}
    H_p =\textstyle \sum_{i=0}^\infty C_i p^i \quad \text{where}\quad C_i \sim \operatorname{Unif}\{0,1,..,p-1\}.  
\end{align}
Under the isomorphism $\widehat{\bbZ} \cong \prod_p\bbZ_p$, a Haar random element of $\widehat{\bbZ}$ corresponds to sampling independent elements from the Haar measures on the different factors:
\begin{align}
    \operatorname{Haar}\bigl(\widehat{\bbZ} \bigr) = \otimes_{\textnormal{primes }p} \operatorname{Haar}(\bbZ_p).\label{eq: HaarProfiniteZ}   
\end{align}
The reader is referred to the books \cite{gouvea2020padicintro,katok2007p,robert2000course} for additional background.

In greater generality, the role of the $p$-adic integers may be replaced by the $\mathfrak{m}$-adic completions\footnote{
Here, the $\mathfrak{m}$-adic completion of a ring is defined similar to \eqref{eq: Def_ProfiniteCompletion} except that we now consider the inverse limit of the system $R/\mathfrak{m}^n$.
We refer to \cite[Chapter 10]{atiyah2018introduction} and \cite[Chapter 7]{eisenbud2013commutative} for additional background.
} at maximal ideals $\mathfrak{m}\subseteq R$. 
(See  \eg \cite[\S 2.1]{jaikin2023finite} for such a statement for arbitrary finitely generated rings.)      
The maximal ideals of $\bbZ[x]$ are $\mathfrak{m} = p\bbZ[x] + \beta(x)\bbZ[x]$ with $p\in \bbZ$ a prime and $\beta\in \bbZ[x]$ a monic polynomial of degree $\geq 1$ whose reduction modulo $p$ is irreducible in $\bbF_p[x]$; see \cite[p.22]{reid1995undergraduate}. 
Thus, one has the following explicit description in the case of $R = \bbZ[x]$:
\begin{align}
    \widehat{R} \cong \textstyle \prod_{p}\prod_{ \beta(x)} R_{p,\beta}\quad \textnormal{ with }\quad   R_{p,\beta} \de  \bigl\{ \sum_{i=0}^\infty c_i \beta^i: c_i\in \bbZ_p[x] \textnormal{ with } \deg(c_i)<\deg(\beta) \bigr\},\label{eq: ProfiniteZpbeta}  
\end{align}
where the products correspond to the distinct maximal ideals, meaning that the second product has as many factors as there are irreducible monic polynomial in $\bbF_p[x]$. 
Multiplication in $R_{p,\beta}$ is done with overflow on the degree of the polynomial coefficients being carried to higher powers of $\beta$. 
(In other words, $R_{p,\beta}$ is isomorphic as a ring to the quotient of $\bbZ_p[x][\![y]\!]$ by the ideal generated by $y-\beta(x)$.)   
A random element $H_{p,\beta} \sim\operatorname{Haar}(R_{p,\beta})$  can be generated using independent coefficients: 
\begin{align}
    H_{p,\beta} = \textstyle \sum_{i=0}^\infty  c_i  \beta^i \  \textnormal{ where }\  c_i = \sum_{j=0}^{\deg(\beta)-1} C_{i,j} x^j \ \textnormal{ with }\  C_{i,j} \sim \operatorname{Haar}(\bbZ_p).  \label{eq: HaarElementZpbeta}
\end{align}
A Haar random element under the isomorphism $\widehat{R} \cong \prod_{p, \beta(x)} R_{p,\beta}$ corresponds to sampling independent elements from the Haar measures on the different factors, similar to \eqref{eq: HaarProfiniteZ}.

\subsection{Pieces of modules and relevant properties}\label{sec: PrelimOnModules}
Given an Abelian group $G$ and a $\bbZ[x]$-module $\sM$ we introduce the following notation: 
\begin{align}
    G_p \de G\otimes_{\bbZ} \bbZ_p \quad \textnormal{ and }\quad \sM_{p,\beta} \de \sM \otimes_{\bbZ[x]} R_{p,\beta}.\label{eq: Def_NotationGpMpbeta}   
\end{align}
Here, $\oplus_{\bbZ}$ and $\otimes_{\bbZ[x]}$ refer to tensor products of Abelian groups or $\bbZ[x]$-modules, respectively. 
The subscript $\bbZ$ reflects that Abelian groups are exactly the same object as $\bbZ$-modules.  

Concrete calculations for the group operation can be done using that every finitely generated Abelian group is a direct sum of cyclic groups by the classification theorem together with the following standard fact:  
\begin{lemma}\label{lem: GroupGp}
     Suppose that $G \cong \oplus_{i=1}^m (\bbZ/k_i\bbZ)$ for $k_1,\ldots,k_m \geq 0$. 
     Then, $G_p \cong  \oplus_{i=1}^m  (\bbZ/p^{e_i}\bbZ)$ with $e_i \de \sup\{j \geq 0 : p^j\mid k_i  \}$. 
     Here, it is to be understood that $\bbZ/p^\infty\bbZ \de \bbZ_p$ when $k_i = 0$. 
\end{lemma}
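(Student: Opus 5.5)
Since the tensor product commutes with finite direct sums, the proof comes down to computing $(\bbZ/k\bbZ)\otimes_{\bbZ}\bbZ_p$ for a single integer $k\geq 0$. Indeed, $G\cong\oplus_{i=1}^m(\bbZ/k_i\bbZ)$ gives
\[
    G_p \cong \textstyle\bigoplus_{i=1}^m \bigl((\bbZ/k_i\bbZ)\otimes_{\bbZ}\bbZ_p\bigr),
\]
so it suffices to show that $(\bbZ/k\bbZ)\otimes_{\bbZ}\bbZ_p\cong \bbZ/p^{e}\bbZ$ with $e=\sup\{j\geq 0: p^j\mid k\}$, where we read this as $\bbZ_p$ when $k=0$.

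First I would use right-exactness of $-\otimes_{\bbZ}\bbZ_p$. Tensoring the exact sequence $\bbZ\xrightarrow{\cdot k}\bbZ\to\bbZ/k\bbZ\to 0$ with $\bbZ_p$ yields $(\bbZ/k\bbZ)\otimes_{\bbZ}\bbZ_p\cong \bbZ_p/k\bbZ_p$. The case $k=0$ is now immediate: the quotient is $\bbZ_p$ itself, which agrees with $e=\infty$ and the stated convention.

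For $k\neq 0$, write $k=p^e u$ with $p\nmid u$. The key step is to show that $u$ is a unit in $\bbZ_p$. Once this holds, $k\bbZ_p=p^e\bbZ_p$. The unit property follows because $u$ is invertible modulo $p^j$ for every $j$, and these compatible inverses define an element of $\bbZ_p=\varprojlim \bbZ/p^j\bbZ$. In terms of \eqref{eq: Def_Zp}, one can instead say that an element with nonzero constant coefficient $c_0$ is invertible, by solving for the coefficients of the inverse recursively.

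It remains to identify $\bbZ_p/p^e\bbZ_p$ with $\bbZ/p^e\bbZ$. The natural map $\bbZ\to\bbZ_p/p^e\bbZ_p$ is surjective: modulo $p^e$, a series $\sum_i c_ip^i$ is congruent to its truncation $\sum_{i<e}c_ip^i\in\bbZ$. Its kernel is $\bbZ\cap p^e\bbZ_p=p^e\bbZ$, since the first $e$ digits of an integer in $p^e\bbZ_p$ must vanish. This gives the claim for each summand, and summing over $i$ proves the lemma.

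There is no serious obstacle here. The only point requiring care is that $u$ is invertible in $\bbZ_p$, and this is standard.
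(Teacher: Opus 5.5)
Your proof is correct and essentially matches the paper's: both reduce to a single cyclic summand because tensor products distribute over direct sums, and both rest on integers prime to $p$ being units in $\bbZ_p$. The difference is cosmetic. The paper splits $\bbZ/k\bbZ$ by the Chinese remainder theorem into $\bbZ/p^{e}\bbZ$ and a prime-to-$p$ part that vanishes after tensoring. You instead use right-exactness to get $\bbZ_p/k\bbZ_p=\bbZ_p/p^{e}\bbZ_p$ directly, and you also treat the case $k=0$ explicitly, which the paper leaves implicit.
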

\begin{proof}
    Note that  $(\bbZ/p^{e_i} \bbZ)\otimes_{\bbZ} \bbZ_p \cong \bbZ/p^{e_i}\bbZ$. 
    Further, any integer $k'\geq 1$ that is coprime with $p$ is invertible in $\bbZ_p$ \cite[\S 1.5]{robert2000course}, implying that $(\bbZ/k'\bbZ)\otimes_{\bbZ} \bbZ_p = 0$ if $k' \not\equiv 0\bmod p$.
    The claim now follows from the Chinese remainder theorem since tensor products distribute over direct sums.      
\end{proof}
In particular, \Cref{lem: GroupGp} implies that every finite Abelian group satisfies $G\cong \oplus_{\textnormal{primes }p} G_p$. 
A related decomposition result for modules that are finitely generated as groups (but not necessarily finite) is given by the following \Cref{lem: ModuleMpMpbeta}: 
\begin{lemma}\label{lem: ModuleMpMpbeta}
    Consider a $\bbZ[x]$-module $\sM$ which is finitely generated as an Abelian group. 
    Then, it holds for every prime $p$ that $\sM_p \cong \oplus_{\beta} \sM_{p,\beta}$ as $\bbZ[x]$-modules, where the direct sum runs over representatives in $\bbZ[x]$ of the irreducible monic polynomials in $\bbF_p[x]$.   
\end{lemma}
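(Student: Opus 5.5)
The plan is to reduce to a finitely generated $\bbZ_p[x]$-module and then split it along the factorization of a characteristic polynomial modulo $p$, using Hensel lifting.

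\emph{Reduction and choice of polynomial.} First, $\sM_p = \sM\otimes_{\bbZ}\bbZ_p$ is a $\bbZ_p[x]$-module that is finitely generated over $\bbZ_p$. Since $\bbZ_p$ is a principal ideal domain, $\sM_p \cong T\oplus \bbZ_p^r$ as a $\bbZ_p$-module, where $T$ is a finite $p$-group. Multiplication by $x$ is a $\bbZ_p$-linear endomorphism, so by Cayley--Hamilton applied to a presentation (or to a lift of the endomorphism to $\bbZ_p^m$ for a surjection $\bbZ_p^m\to\sM_p$), there is a monic $f\in\bbZ_p[x]$ with $f(x)\sM_p=0$. Factor $\bar f=\prod_{j}\bar\beta_j^{a_j}$ in $\bbF_p[x]$ into distinct monic irreducibles. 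By Hensel's lemma (the factors $\bar\beta_j^{a_j}$ are pairwise coprime), we get $f=\prod_j f_j$ in $\bbZ_p[x]$ with $f_j$ monic, pairwise coprime, and $\bar f_j=\bar\beta_j^{a_j}$. Coprimality modulo $p$ lifts to comaximality $(f_i)+(f_j)=\bbZ_p[x]$, because $\bbZ_p[x]/(f_i,f_j)$ is finite over $\bbZ_p$ and vanishes mod $p$, hence vanishes by Nakayama. The Chinese remainder theorem then gives $\bbZ_p[x]/(f)\cong\prod_j\bbZ_p[x]/(f_j)$, and hence $\sM_p\cong\oplus_j f$-idempotent pieces $\sM^{(j)}$.

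\emph{Identifying the pieces.} I will show that $\sM^{(j)}\cong\sM_{p,\beta_j}$ and that $\sM_{p,\beta}=0$ for every $\beta$ not among the $\beta_j$. Since $\sM$ is finitely generated over $\bbZ$, we have $\sM_{p,\beta}=\sM\otimes_{\bbZ[x]}R_{p,\beta}=\sM_p\otimes_{\bbZ_p[x]}R_{p,\beta}$, using that $R_{p,\beta}$ is a $\bbZ_p[x]$-algebra. Moreover $R_{p,\beta}$ is the $(p,\beta)$-adic completion of $\bbZ_p[x]$. Because $\sM_p$ is finite over $\bbZ_p$, hence finitely generated and already $p$-adically complete, tensoring with $R_{p,\beta}$ computes the $(p,\beta)$-adic completion of $\sM_p$. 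It therefore equals the completion of $\bbZ_p[x]/(f)$-module pieces.

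For each $j$, the ring $A_j=\bbZ_p[x]/(f_j)$ is finite over $\bbZ_p$ and has a unique maximal ideal $(p,\beta_j)$, since $A_j/p=\bbF_p[x]/(\bar\beta_j^{a_j})$. It is therefore local and $p$-adically complete, hence $(p,\beta_j)$-adically complete. So $A_j\otimes R_{p,\beta_j}=A_j$, while $A_j\otimes R_{p,\beta}=0$ for $\beta\neq\beta_j$, because $f_j$ becomes a unit in $R_{p,\beta}$: its reduction is coprime to $\bar\beta$, so $f_j\notin(p,\beta)$. Tensoring the decomposition with $R_{p,\beta}$ gives the claim, and the isomorphisms are visibly $\bbZ[x]$-linear.

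\emph{Main obstacle.} The delicate step is justifying $\sM_p\otimes_{\bbZ_p[x]}R_{p,\beta}\cong$ completion, i.e., the flatness and completion behavior, without heavy machinery. I would sidestep it by working directly with $A_j$: show $A_j\otimes_{\bbZ_p[x]}R_{p,\beta}$ is computed from the presentation $R_{p,\beta}/f_jR_{p,\beta}$. If $\beta\neq\beta_j$, this is zero because $f_j$ is a unit. If $\beta=\beta_j$, this is isomorphic to $A_j$ via division with remainder by the monic $f_j$ in the power-series description \eqref{eq: ProfiniteZpbeta}, since $f_j^N\in(p,\beta_j)^{N}$-type containments let the series converge. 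Right-exactness of the tensor product then transfers everything from $\bbZ_p[x]/(f)$ to $\sM_p$.
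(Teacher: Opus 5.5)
Your proposal is correct and follows essentially the same route as the paper's proof in \Cref{apx: proofModuleMpMpbeta}. Both use Cayley--Hamilton, Hensel lifting of the factorization modulo $p$, comaximality via Nakayama over $\bbZ_p$, and the Chinese remainder theorem. Both then identify $(\bbZ_p[x]/f_j\bbZ_p[x])\otimes_{\bbZ_p[x]}R_{p,\beta}$ through the fact that base change to $R_{p,\beta}$ computes the $(p,\beta)$-adic completion of finitely generated modules. Your step ``$p$-adically complete, hence $(p,\beta_j)$-adically complete'' is exactly the paper's comparison of filtrations, which rests on $\beta_j^{a_j}\in pA_j$.

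The differences are minor. You kill the pieces with $\beta\neq\beta_j$ by observing that $f_j$ is a unit in the local ring $R_{p,\beta}$, which also explicitly covers the $\beta$ not dividing $f\bmod p$, a case the paper leaves implicit. Your sketched division-with-remainder ``sidestep'' is not needed, since the paper likewise simply cites \cite[Proposition 10.13]{atiyah2018introduction}.
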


A proof for \Cref{lem: ModuleMpMpbeta} can be found in \Cref{apx: proofModuleMpMpbeta}. 
\section{Rephrasing conditions using the \texorpdfstring{$\bbZ[x]$}{Z[x]}-module structure of \texorpdfstring{$\coker(\bM-x\bI)$}{coker(M-xI)}}\label{sec: Rephrasing}
We now consider the first step of the approach outlined in \Cref{sec: IntroTwoStep} and rephrase the sufficient conditions in terms of module-theoretic data. 
The condition from \Cref{thm: WalkSufficientConditionWithLoops} is rephrased in \Cref{prop: ModuleRephrasingWalkDet} and the condition from \Cref{thm: DiscriminantCondition} is considered in \Cref{prop: ModuleRephrasingDiscriminantCondition,prop: ModuleRephrasingDiscriminantConditionTwo}.

Throughout this section we let $\zeta\in \bbZ^n$ be an integer vector and we consider an integer matrix $\bM\in \bbZ^{n\times n}$.
The results of the current section do not require $\bM$ to be symmetric.
The symbol $p$ will always refer to a prime number. 
\subsection{Regarding the walk matrix}  
Note that the walk matrix $\bW = [\zeta, \bM\zeta, \ldots, \bM^{n-1}\zeta]$ has integer entries.
In particular, since $\bbZ$-modules are just Abelian groups, we may consider $\coker(\bW)$ as a group. 
A group-theoretic rephrasing of the sufficient conditions similar to the following one was also considered in \cite{van2025cokernel}. 
Readers consulting that previous work are warned, however, that our notation $G_p$ from \eqref{eq: Def_NotationGpMpbeta} differs from the notation used in \cite{van2025cokernel}.  

\begin{lemma}\label{lem: psquared_to_cokergroup}
    It holds that $p^{2} \nmid \det(\bW)$ if and only if $\coker(\bW)_p$ is the trivial group or $\bbZ/p\bbZ$.      
\end{lemma}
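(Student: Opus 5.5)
We argue via the Smith normal form of $\bW$ together with \Cref{lem: GroupGp}. Choose unimodular matrices $\bU,\bV\in \bbZ^{n\times n}$ such that $\bU\bW\bV = \operatorname{diag}(k_1,\ldots,k_n)$ with $k_i\geq 0$. Since $\bU$ and $\bV$ are invertible over $\bbZ$, this yields a group isomorphism
\[
\coker(\bW)\cong \bigoplus_{i=1}^n \bbZ/k_i\bbZ ,
\]
and $\det(\bW) = \pm \prod_{i=1}^n k_i$. By \Cref{lem: GroupGp}, $\coker(\bW)_p \cong \bigoplus_{i=1}^n \bbZ/p^{e_i}\bbZ$, where $e_i = \sup\{j\geq 0: p^j\mid k_i\}$ and $\bbZ/p^\infty\bbZ = \bbZ_p$ when $k_i=0$.

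For the forward direction, suppose $p^2\nmid \det(\bW)$. Then $\det(\bW)\neq 0$, since $p^2$ divides $0$. Hence every $k_i\neq 0$ and every $e_i$ is finite. By uniqueness of $p$-adic valuations of nonzero integers, $\sum_i e_i$ equals the exponent of $p$ in $\det(\bW)$, which is at most $1$. So either all $e_i=0$, in which case $\coker(\bW)_p$ is trivial, or exactly one $e_i=1$ and the rest vanish, in which case $\coker(\bW)_p\cong \bbZ/p\bbZ$.

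For the converse, suppose $\coker(\bW)_p$ is trivial or isomorphic to $\bbZ/p\bbZ$. Then no $e_i$ is infinite, since otherwise $\bbZ_p$ would be a direct summand of a finite group. Thus all $k_i\neq 0$ and $\det(\bW)\neq 0$. Comparing orders, $\#\coker(\bW)_p=p^{\sum_i e_i}\in\{1,p\}$, so $\sum_i e_i\leq 1$. This is exactly the statement that $p^2\nmid \prod_i k_i = \pm\det(\bW)$.

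The only point needing care is the degenerate case $\det(\bW)=0$. It is handled by the convention $\bbZ/p^\infty\bbZ=\bbZ_p$ in \Cref{lem: GroupGp}, together with the observation that $\bbZ_p$ is neither trivial nor isomorphic to $\bbZ/p\bbZ$ (it is infinite). Everything else is bookkeeping with valuations.
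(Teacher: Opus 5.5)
Your proposal is correct and follows essentially the same route as the paper: Smith normal form, the isomorphism $\coker(\bW)\cong\bigoplus_i \bbZ/k_i\bbZ$, and \Cref{lem: GroupGp}. The only difference is bookkeeping. You sum the $p$-adic valuations instead of using the divisibility chain $d_i\mid d_{i+1}$ to reduce to ``$p\nmid d_{n-1}$ and $p^2\nmid d_n$'', and you treat the case $\det(\bW)=0$ explicitly, which the paper leaves implicit.
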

\begin{proof}
    Let $\bW = \bU \bD \bV$ be the \emph{Smith normal form}.
    This means that $\bU, \bV\in \bbZ^{n\times n}$ are integer matrices with $\det(\bU),\det(\bV) \in \{-1,+1 \}$ and $\bD = \operatorname{diag}(d_1,\ldots,d_n)$ is a diagonal matrix with integer entries $d_i\in \bbZ$ satisfying $d_{i}\mid d_{i+1}$ for every $i\geq 1$.
    Note that $\det(\bW) = \pm \prod_{i=1}^n d_i$. 
    Consequently, it holds that $p^2 \nmid \det(\bW)$ if and only if $p\nmid d_{n-1}$ and $p^2 \nmid d_n$. 
    
    The constraint on the determinant of $\bU$ and $\bV$ means that these matrices are invertible over the integers.  
    In particular, $\operatorname{Im}(\bW) = \operatorname{Im}(\bU \bD)$. 
    Now, recalling the definition of the cokernel from \eqref{eq: Def_Coker},
    \begin{align}
        \textstyle\coker(\bW) = \bbZ^n / \operatorname{Im}(\bU \bD) \cong   \coker(\bD) \cong \oplus_{i=1}^n \bbZ/d_i \bbZ,  \label{eq:ShyQuote}
    \end{align}
    where the isomorphism $\bbZ^n / \operatorname{Im}(\bU \bD) \cong   \coker(\bD)$ uses that the map $z\mapsto \bU^{-1}z$ on $\bbZ^n$ sends $\operatorname{Im}(\bU \bD)$ to $\operatorname{Im}(\bD)$.
    Use \Cref{lem: GroupGp} to conclude that $p^{2}\nmid \det(\bW)$ if and only if $\coker(\bW)_p \in \{0,\bbZ/p\bbZ \}$. 
\end{proof}

Note that if $\bbZ^n$ is equipped with the $\bbZ[x]$-module structure induced by the action of $\bM$ as in \Cref{sec: PrelimCokernel} then the image of $\bW$ is exactly the $\bbZ[x]$-submodule generated by the vector $\zeta$: 
\begin{align}
    \textstyle \operatorname{Im}\bigl(\bW\bigr)  = \bigl\{\bW v: v\in \bbZ^n \bigr\} = \bigl\{ \sum_{i=0}^{n-1} v_i\bM^i  \zeta: v_{i}\in \bbZ\bigr\} = \bigl\{ Q(\bM)\zeta : Q\in \bbZ[x]   \bigr\}, 
\end{align}
where the final equality uses that $\bM^n = \sum_{i=0}^{n-1}c_i \bM^i$ for certain $c_i\in \bbZ$ due to the Cayley--Hamilton theorem. 
It follows that the quotient $\coker(\bW) = \bbZ^n / \operatorname{Im}(\bW)$ is not only an Abelian group but also canonically equipped with the structure of a $\bbZ[x]$-module. 
Moreover, as $\bbZ[x]$-modules,  
\begin{align}
    \coker\bigl(\bW\bigr) = \frac{\bbZ^n}{\operatorname{Im}(\bW)} \cong   \frac{\bbZ[x]^n }{\operatorname{Im}(\bM-x\bI) + \bbZ[x] \zeta } = \coker\bigl(\bM - x\bI,\, \zeta\bigr),\label{eq:DampQuote}   
\end{align}
where we consider $[\bM-x\bI, \zeta ]$ as a rectangular matrix over $\bbZ[x]$ in the final equality.

To understand why \eqref{eq:DampQuote} is useful, note that the entries of $\bW$ are nontrivial algebraic combinations of those of $\bM$ and $\zeta$ which would complicate direct study of $\coker(\bW)$.  
On the other hand, $\coker\bigl(\bM - x\bI,\, \zeta\bigr)$ refers directly to $\bM$ and $\zeta$ in a linear way.

We can refine the group-theoretic formulation from \Cref{lem: psquared_to_cokergroup} by considering the admissible $\bbZ[x]$-module structures: 
\pagebreak[3]
\begin{proposition}\label{prop: ModuleRephrasingWalkDet}
    It holds that $p^2 \nmid \det(\bW)$ if and only if one of the following two events occurs: 
    \begin{enumerate}[leftmargin = 1.6em]
        \item It holds that $\coker\bigl(\bM - x\bI,\, \zeta\bigr)_{p,\beta} \cong 0$ for all monic $\beta(x) \in \bbZ[x]$ with irreducible reduction in $\bbF_p[x]$.
        \item Or, there exists $a\in \bbZ$ such that one has an isomorphism of $\bbZ[x]$-modules 
        \begin{align}
            \coker\bigl(\bM - x\bI,\, \zeta\bigr)_{p, x-a} \cong \bbF_{p}[x]/(x-a)\bbF_{p}[x],
        \end{align}
        and it holds that $\coker\bigl(\bM - x\bI,\, \zeta\bigr)_{p,\beta} \cong 0$ for every $\beta \not\equiv x-a \bmod p$.     
    \end{enumerate}
\end{proposition}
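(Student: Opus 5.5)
We combine \Cref{lem: psquared_to_cokergroup} with the isomorphism \eqref{eq:DampQuote} and the decomposition of \Cref{lem: ModuleMpMpbeta}. Write $\sM \de \coker(\bM - x\bI,\, \zeta)$. By \eqref{eq:DampQuote}, $\sM \cong \coker(\bW)$ as $\bbZ[x]$-modules. In particular $\sM$ is finitely generated as an Abelian group, being a quotient of $\bbZ^n$. Hence \Cref{lem: ModuleMpMpbeta} applies and gives $\sM_p \cong \oplus_\beta \sM_{p,\beta}$ as $\bbZ[x]$-modules. By \Cref{lem: psquared_to_cokergroup}, $p^2\nmid\det(\bW)$ if and only if the underlying group of $\sM_p$ is $0$ or $\bbZ/p\bbZ$. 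Here we use that $\coker(\bW)_p = \coker(\bW)\otimes_\bbZ \bbZ_p$ is isomorphic to $\sM\otimes_{\bbZ}\bbZ_p$, with its induced $\bbZ[x]$-action. So it remains to classify the $\bbZ[x]$-modules $\sM_p$ whose underlying group is $0$ or $\bbZ/p\bbZ$, and to read off the pieces $\sM_{p,\beta}$.

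\emph{Group $0$.} The group is $0$ if and only if every summand $\sM_{p,\beta}$ vanishes, since a direct sum is zero exactly when each summand is. This is case (1).

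\emph{Group $\bbZ/p\bbZ$.} The sum $\oplus_\beta \sM_{p,\beta}$ has order $p$, so exactly one summand $\sM_{p,\beta_0}$ is nonzero and it has order $p$. A $\bbZ[x]$-module structure on $\bbZ/p\bbZ$ is determined by the action of $x$, which is multiplication by some $a \in \{0,\ldots,p-1\}$. So the module is $\bbF_p[x]/(x-a)$, a simple module with annihilator the maximal ideal $(p, x-a)$.

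We must then check that the nonzero summand sits at $\beta_0 \equiv x-a$. I would argue that $\sM_{p,\beta}$ is a module over $R_{p,\beta}$, in which $\beta$ is topologically nilpotent and everything outside $(p,\beta)$ is a unit. So a nonzero finite $\sM_{p,\beta}$ must have some element killed by $(p,\beta)$. For the simple module above, this forces $(p,\beta_0) = (p, x-a)$, that is, $\beta_0 \equiv x-a \bmod p$. Alternatively, one can compute $\bbF_p[x]/(x-a)\otimes_{\bbZ[x]} R_{p,\beta}$ directly: it equals $\bbF_p[x]/(x-a,\beta)$, since $R_{p,\beta}/(p,\beta) \cong \bbF_p[x]/(\beta)$. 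This is nonzero exactly when $\beta\equiv x-a$. This gives case (2), including the isomorphism $\sM_{p,x-a}\cong \bbF_p[x]/(x-a)\bbF_p[x]$.

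\emph{Converse.} In case (2) we have $\sM_p\cong \bbF_p[x]/(x-a)$, whose underlying group is $\bbZ/p\bbZ$. Applying \Cref{lem: psquared_to_cokergroup} then gives $p^2\nmid\det(\bW)$.

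The main obstacle is bookkeeping rather than depth. One point is the compatibility of $\coker(\bW)_p$ (a tensor over $\bbZ$) with $\sM_p$ as $\bbZ[x]$-modules. The other is the identification of which $\beta$ carries the unique simple factor. I would handle the latter by the direct tensor computation just described, using the description \eqref{eq: ProfiniteZpbeta} of $R_{p,\beta}$.
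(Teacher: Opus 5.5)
Your proposal is correct and follows the paper's proof: combine \eqref{eq:DampQuote}, \Cref{lem: psquared_to_cokergroup} and \Cref{lem: ModuleMpMpbeta}, and use that the only $\bbZ[x]$-module structures on the group $\bbZ/p\bbZ$ are $\bbF_p[x]/(x-a)\bbF_p[x]$. The paper calls the location of the unique nonzero summand at $\beta \equiv x-a \bmod p$ ``immediate'', and your local-ring argument (or, equivalently, your direct tensor computation) correctly fills in that step.
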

\begin{proof}
    Note that a $\bbZ[x]$-module $\sM$ satisfies $\sM_p \cong \bbZ/p\bbZ$ as a group (resp. $\sM_p \cong 0$) if and only if $\sM_p \cong \bbF_p[x]/(x-a)\bbF_p[x]$ as a module for some $a\in \bbZ$ (resp. if and only if it is the zero module). 
    The result is hence immediate from \eqref{eq:DampQuote} and \Cref{lem: ModuleMpMpbeta,lem: psquared_to_cokergroup}.     
\end{proof} 
To understand how often the determinant of the walk matrix is square-free, it now suffices to study the $\bbZ[x]$-module $\coker(\bM-x\bI,\zeta)$.
This is done in \Cref{sec: ConditionWalk} for a model where $\bM$ and $\zeta$ are both taken to have Haar random entries on the profinite completion of $\bbZ[x]$.

\subsection{Regarding the discriminant}\label{sec: Discriminant}
We next rephrase the condition from \Cref{thm: DiscriminantCondition} based on the discriminant in terms of $\coker(\bM-x\bI)$. 
We start in \Cref{sec: PreliminariesDiscriminant} by rephrasing the event where $p^2$ divides the discriminant in terms of the factorization of the polynomial. 
A rephrasing in terms of the cokernel is next given in \Cref{sec: CokerDisc}.

\subsubsection{Preliminaries}\label{sec: PreliminariesDiscriminant}
Consider the $\bbZ$-module of polynomials with degree strictly smaller than $i\geq 1$: 
\begin{align}
    \sP_i \de \bigl\{ f(x)\in \bbZ[x] : \operatorname{deg}(f)  < i \bigr\}. 
\end{align}
Given $\phi,\psi \in \bbZ[x]$, we define the \emph{Sylvester map} to be the $\bbZ$-module morphism specified by  
\begin{align}
    \cS_{\phi,\psi} :\sP_{\operatorname{deg}(\psi)} \oplus  \sP_{\operatorname{deg}(\phi)} \to \sP_{\operatorname{\operatorname{deg}(\psi) + \operatorname{deg}(\phi) }}:  (f, g) \mapsto  f\phi + g\psi.   \label{eq: Def_Sylvester}
\end{align}
If one uses the monomials $1,x,x^2,\ldots$ as a basis for $\sP_{i}$ then one can represent $\cS_{\phi,\psi}$ as a square matrix whose entries are the coefficients of $\phi$ and $\psi$ with shifts from one row to the next; see \eg \cite[\S 2.3]{wang2016square}.  
The determinant in this basis is usually called the \emph{resultant of $\phi$ and $\psi$}: 
\begin{align}
    \operatorname{Res} \bigl(\phi,\psi\bigr) \de \det\bigl(\cS_{\phi,\psi} \bigr).\label{eq:MoistCity}
\end{align}
A different choice of $\bbZ$-module basis for the domain and codomain can change the determinant, but only by by a factor $\pm 1$. 
Ambiguity up to a unit will not matter for us, so we may simply refer to the determinant of the Sylvester map for a canonical perspective.    
The relevance for our purposes occurs when the second polynomial is the derivative of the first: 
\begin{definition}\label{def: Discriminant}
    Consider a monic polynomial $\phi\in \bbZ[x]$ with degree no less than one. 
    Then, the resultant of $\phi$ and its formal derivative $\phi'$ is called its \emph{discriminant} and is denoted $
        \Delta_{\phi} \de \operatorname{Res}(\phi, \phi')$. 
\end{definition}
It may be shown that \Cref{def: Discriminant} is equivalent to the formula in terms of roots given in \eqref{eq: DiscriminantRootFormula}. 
We will not need that formula but refer the interested reader to \cite[Chapter 12]{gelfand1994discriminantresultant}.

\Cref{def: Discriminant} implies that the discriminant of an integer polynomial is divisible by $p$ if and only if the Sylvester map of $\phi$ and $\phi'$ is singular over $\bbF_p$.
In this context, note that if  $\bbK$ is a field,  then B\'ezout's identity in principle ideal domain $\bbK[x]$ states that for every $\phi,\psi\in \bbK[x]$, 
\begin{align}
    \bigl\{ f(x)\phi(x) + g(x)\psi : f(x), g(x)\in \bbK[x]  \bigr\} = \bigl\{ h(x)\operatorname{gcd}(\phi,\psi): h(x)\in \bbK[x] \bigr\}. 
\end{align}
In particular, the Sylvester map is nonsingular modulo $p$  if and only if the polynomials $\phi$ and $\psi$ are coprime over $\bbF_p$. 
(We here use that a linear transformation is nonsingular if and only if it is surjective.)   
This yields the following classical \Cref{prop: DiscriminantDivisibleP}. 
A polynomial $\phi\in \bbF_p[x]$ is said to be \emph{square-free} if there does not exist any irreducible $\beta\in \bbF_p[x]$ with $\operatorname{deg}(\beta)\geq 1$ and $\beta^2 \mid \phi$.  
\begin{proposition}\label{prop: DiscriminantDivisibleP}
    Consider some monic $\phi \in \bbZ[x]$ with $\deg(\phi)\geq 1$. 
    Then, $p\nmid \Delta_{\phi}$ if and only if $\phi\bmod p$ is square-free over $\bbF_p$.    
\end{proposition}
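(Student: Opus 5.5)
The plan is to reduce the statement to coprimality of $\bar\phi \de \phi \bmod p$ and $\bar\phi' = (\phi')\bmod p$ over $\bbF_p$, and then to the classical relation between repeated factors and the derivative.

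\emph{Reduction to coprimality.} Since $\phi$ is monic, $\deg \bar\phi = \deg\phi$. Reducing the Sylvester matrix modulo $p$ gives the matrix of the $\bbF_p$-linear map $(f,g)\mapsto f\bar\phi + g\bar\phi'$ from pairs of polynomials of degree $<\deg\phi'$ and $<\deg\phi$ into polynomials of degree $<\deg\phi+\deg\phi'$. Here $\deg\phi'$ means the formal degree $n-1$, even if the leading coefficient $n$ vanishes modulo $p$. Hence $p\nmid\Delta_\phi$ if and only if this reduced square map is bijective. We then show that bijectivity is equivalent to $\gcd(\bar\phi,\bar\phi')=1$.

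\emph{Coprime implies bijective.} By B\'ezout there exist $f_0,g_0$ with $f_0\bar\phi+g_0\bar\phi'=1$. So for any target $h$ of degree $<2n-1$ we can write $h=f\bar\phi+g\bar\phi'$. Divide $g$ by the monic $\bar\phi$ to force $\deg g<n$, absorbing the quotient into $f$. A degree count then gives $\deg f<n-1$, because $\deg(f\bar\phi)\le\max(\deg h,\deg(g\bar\phi'))<2n-1$. Thus the map is surjective, hence bijective.

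\emph{Not coprime implies not bijective.} If $d=\gcd$ has $\deg d\ge1$, the image lies in $d\,\bbF_p[x]$ intersected with polynomials of degree $<2n-1$, which is a proper subspace (for example, $1$ is not in it). This is exactly the B\'ezout remark preceding the proposition. The degenerate case $\bar\phi'=0$ fits here: then $\gcd=\bar\phi$ has degree $n\ge1$.

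\emph{Square-free iff coprime to the derivative.} If $\beta^2\mid\bar\phi$ with $\beta$ irreducible, write $\bar\phi=\beta^2u$. The product rule gives $\beta\mid\bar\phi'$, so the two are not coprime. Conversely, suppose an irreducible $\beta$ divides both, and write $\bar\phi=\beta u$. Then $\bar\phi'=\beta'u+\beta u'$, so $\beta\mid\beta'u$. Since $\beta$ is irreducible over the perfect field $\bbF_p$, it is separable, so $\beta'\neq0$ and $\deg\beta'<\deg\beta$, hence $\beta\nmid\beta'$. Therefore $\beta\mid u$, and $\beta^2\mid\bar\phi$.

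The main care point is this last separability step, i.e.\ $\beta'\ne0$ for irreducible $\beta$ over $\bbF_p$. It holds because $\beta'=0$ would make $\beta$ a polynomial in $x^p$, and then $\beta$ is a $p$-th power since Frobenius is surjective on $\bbF_p$. The secondary care point is the degree bookkeeping when $p\mid n$ and the formal degree of $\phi'$ drops.
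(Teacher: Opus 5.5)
Your proof is correct and follows essentially the same route as the paper: reduce $p\nmid\Delta_\phi$ to coprimality of $\phi$ and $\phi'$ modulo $p$ via the Sylvester map and B\'ezout, then use the product rule to see that a common irreducible factor exists exactly when some $\beta^2$ divides $\phi \bmod p$. You additionally spell out two things the paper uses implicitly: the degree bookkeeping when $p \mid \deg\phi$, and the separability fact $\beta \nmid \beta'$, which the paper needs for its claim that $\beta_i \mid \phi'$ if and only if $n_i \geq 2$.
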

\begin{proof}
    Let $\phi \equiv \prod_{i}\beta_i(x)^{n_i} \bmod p$ be the factorization of $\phi$ into powers of distinct irreducible monic polynomials $\beta_i\in  \bbF_p[x]$. 
    Then, $\phi' = \sum_{i}n_i \beta_i(x)^{n_i - 1}\beta_i'(x) \prod_{j\neq i} \beta_j(x)^{n_j}$ and we observe that $\beta_i \mid \phi'$ if and only if $n_i \geq 2$. 
    Thus, $\gcd(\phi,\phi')=1$ if and only if $\phi$ is not divisible by $\beta^2$ for all irreducible $\beta$. 
    In other words, the Sylvester map is nonsingular modulo $p$ if and only if $\phi$ is square-free. 
\end{proof}

An integer $d\in \bbZ$ is said to be \emph{exactly divisible by $p$}, denoted $p\parallel d$, if $p\mid d$ but $p^2\nmid d$. 
A proof of the following characterization of the event where the discriminant is exactly divisible by an odd prime can be found in work of Ash, Brackenhoff, and Zarrabi \cite[Proposition 6.7]{ash2007equality}.
One of the implications also appears in the proof of the sufficient condition \Cref{thm: DiscriminantCondition} by Wang and Yu \cite[Section 4]{wang2016square} with a different argument that relies on a direct analysis of the Sylvester map.
\pagebreak[3]
\begin{proposition}[{\cite{ash2007equality,wang2016square}}]\label{prop: OddSquaredDividesDiscriminant}
    Consider an odd prime $p$ and a monic polynomial $\phi \in \bbZ[x]$ with $\deg(\phi)\geq 1$. 
    Then, it holds that $p \parallel  \Delta_{\phi}$ if and only if there exists some $a \in \bbZ$ and a square-free polynomial $\xi \in \bbF_p[x]$ with $\xi(a) \not\equiv 0 \bmod p$ such that $\phi(x) \equiv (x-a)^2 \xi(x) \bmod p$ and $p^2\nmid \phi(a)$.  
\end{proposition}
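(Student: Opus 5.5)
The plan is to work $p$-adically and compute $v_p(\Delta_\phi)$ through the factorization of $\phi$ over $\bbZ_p$. By Hensel's lemma, the factorization $\phi \equiv \prod_i \beta_i^{n_i} \bmod p$ into powers of distinct monic irreducibles lifts to a factorization $\phi = \prod_i \phi_i$ in $\bbZ_p[x]$, with each $\phi_i$ monic and $\phi_i \equiv \beta_i^{n_i} \bmod p$. The factors $\phi_i$ are pairwise coprime modulo $p$, so their pairwise resultants are $p$-adic units. The multiplicativity formula
\begin{align}
\Delta_{fg} = \pm\, \Delta_f\,\Delta_g\, \operatorname{Res}(f,g)^2,
\end{align}
valid for monic $f,g$ (it is immediate from the Sylvester-map description, or from \eqref{eq: DiscriminantRootFormula}), then gives $v_p(\Delta_\phi) = \sum_i v_p(\Delta_{\phi_i})$. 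Factors with $n_i = 1$ have unit discriminant by \Cref{prop: DiscriminantDivisibleP} applied to the single factor.

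The condition $p \parallel \Delta_\phi$ is therefore equivalent to exactly one $\phi_i$ having $v_p(\Delta_{\phi_i}) = 1$ and all others contributing $0$. The next step is to show that any factor with $n_i \geq 2$ and $\deg(\beta_i)=d$ has $v_p(\Delta_{\phi_i}) \geq d$, with equality possible only when $n_i = 2$. The cleanest route is over the unramified extension $\bbZ_{p^d}$, where $\beta_i$ splits into $d$ distinct linear factors mod $p$. Hensel's lemma again splits $\phi_i$ into $d$ Galois-conjugate factors $\psi \equiv (x-\alpha)^{n_i}$, each with the same valuation of discriminant, which must be $\geq 1$ by \Cref{prop: DiscriminantDivisibleP}. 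Hence only $d = 1$ with a single repeated linear factor can give total valuation $1$. This reduces the problem to a local claim: for monic $\psi\in\bbZ_p[x]$ with $\psi \equiv (x-a)^m \bmod p$ and $m\ge 2$, one has $v_p(\Delta_\psi)=1$ if and only if $m=2$ and $p^2\nmid \psi(a)$.

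For the local claim, substitute $y = x-a$ and write $\psi = y^m + p\,\chi(y)$, where $\chi$ has degree $<m$.
\begin{enumerate}
\item If $m \geq 3$, the roots satisfy $v(\text{root}) \geq 1/m$ in a splitting field. The product formula \eqref{eq: DiscriminantRootFormula} has $m(m-1)$ factors each of valuation $\geq 1/m$, so $v_p(\Delta_\psi)\ge m-1\ge 2$.
\item If $m=2$, then $\psi = y^2 + pby + pc$ and $\Delta_\psi = p^2b^2 - 4pc$. Since $p$ is odd, $v_p(\Delta_\psi) = 1$ if and only if $p \nmid c$, that is, $p^2 \nmid \psi(a)$. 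This is the step where oddness of $p$ is used, through the factor $4$.
\end{enumerate}
The factor $\psi(a)$ differs from $\phi(a)$ by the product of the other factors evaluated at $a$. These are units because $\xi(a)\not\equiv 0$, so the two conditions on $p^2 \nmid \phi(a)$ and $p^2\nmid \psi(a)$ agree. Collecting the cases yields both directions of \Cref{prop: OddSquaredDividesDiscriminant}: the remaining factor $\xi$ must be square-free mod $p$ and coprime to $x-a$.

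I expect the main obstacle to be the valuation bound for repeated factors of degree $d \geq 2$ and for $m\ge 3$. This requires care with fractional valuations in extensions, or else an alternative argument via the Sylvester map as in \cite{wang2016square}. The Galois-conjugation argument above is what I would try first.
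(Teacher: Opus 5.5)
Your argument is correct. The paper gives no proof of its own: it cites \cite[Proposition 6.7]{ash2007equality} and notes that Wang and Yu obtain one of the two implications by a direct analysis of the Sylvester map. Your route is a self-contained local proof:
\begin{enumerate}
\item Hensel lifting, combined with $\Delta_{fg}=\pm\Delta_f\Delta_g\operatorname{Res}(f,g)^2$, splits $v_p(\Delta_\phi)$ into contributions from the individual irreducible factors of $\phi \bmod p$. The product formula follows from \eqref{eq: DiscriminantRootFormula}; it is not really immediate from the Sylvester matrix.
\item Passing to the unramified extension of degree $d$ shows that a repeated factor of degree $d$ contributes at least $d$.
\item For the remaining linear case, the integrality bound $v(\theta)\geq 1/m$ handles $m\geq 3$, and the explicit quadratic discriminant $p^2b^2-4pc$ handles $m=2$.
\end{enumerate}

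The steps you flagged as the main obstacle are already complete as written. The Galois-conjugacy remark is not even needed, since each of the $d$ Hensel factors over the unramified extension contributes at least $1$ on its own. The only routine point to make explicit is that \Cref{prop: DiscriminantDivisibleP} holds verbatim for monic polynomials over $\bbZ_p$ and its unramified extensions, since its proof only uses the residue field.

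Compared with the Sylvester-map computation of Wang and Yu, which stays inside integer linear algebra, your approach needs $p$-adic tools but buys more structure:
\begin{itemize}
\item It yields both implications at once.
\item It mirrors the paper's decomposition into pieces indexed by $\beta$, as in \Cref{prop: ModuleRephrasingDiscriminantConditionTwo}. There, every piece other than the one at $x-a$ must satisfy \eqref{eq:RipeRock}, which in your language means it contributes nothing to $v_p(\Delta_\phi)$.
\item Your local claim holds for every lift $a$ of the double root, which proves the remark following the proposition: the condition $p^2\nmid\phi(a)$ depends only on $a \bmod p$.
\item It shows that oddness of $p$ enters only through the factor $4$ in the quadratic discriminant. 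This is consistent with the paper's later observation that $2\mid\Delta_\phi$ forces $4\mid\Delta_\phi$.
\end{itemize}
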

\begin{remark}
    A priori, the event in \Cref{prop: OddSquaredDividesDiscriminant} depends on $a \bmod p^2$ but \cite[Proposition 6.7]{ash2007equality} shows that it actually only depends on the reduction modulo $p$. 
    That is, if $\phi(x) \equiv (x-a)^2 \xi(x) \bmod p$ for square-free $\xi$ with $\xi(a)\not\equiv0\bmod p$ and $p^2\nmid \phi(a)$, then $p^2 \nmid \phi(b)$ for all $b \equiv a \bmod p$.   
\end{remark}

\subsubsection{Connection to the cokernel}\label{sec: CokerDisc}
We next rephrase \Cref{prop: DiscriminantDivisibleP,prop: OddSquaredDividesDiscriminant} in terms of $\coker(\bM - x\bI)$ in \Cref{prop: ModuleRephrasingDiscriminantCondition,prop: ModuleRephrasingDiscriminantConditionTwo}, respectively. 
The proofs rely on the following \Cref{lem: DivLength}.

The \emph{length} of a module $\sN$ over a ring $R$ is the maximal length of a chain of proper submodules: 
\begin{align}
    \operatorname{Length}_R(\sN) \de \sup\{L\geq 1: \exists \sN_0 \subsetneq \sN_1 \subsetneq \cdots \subsetneq \sN_L \textnormal{ with }\sN_0 = 0 \textnormal{ and }\sN_L = \sN \}.\label{eq:YoungInk}
\end{align}
\begin{lemma}\label{lem: DivLength}
    Fix a monic polynomial $\beta \in \bbZ[x]$ with $\operatorname{deg}(\beta)\geq 1$ and irreducible reduction in $\bbF_p$.
    Then, 
    \begin{equation}
    \max\bigl\{k\geq 0: \beta(x)^k \mid \varphi_{\bM}(x) \bmod p \bigr\} =
    \operatorname{Length}_{R_{p,\beta}/pR_{p,\beta}}\biggl(
    \coker\Bigl(\frac{(\bM - x\bI)_{p,\beta}}{p 
            \coker(\bM - x\bI)_{p,\beta}}
            \Bigr) \biggr). \label{eq:UsefulRag}
    \end{equation}
\end{lemma}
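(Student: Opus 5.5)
The plan is to reduce the whole statement to linear algebra over $\bbF_p[x]$ via the Smith normal form of $\bM - x\bI$, and then localize at $\beta$.

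\textbf{Step 1: reduce to $\bbF_p[x]$.} Let $\sN = \coker(\bM-x\bI)$. Then
\[
\sN_{p,\beta}/p\sN_{p,\beta} \cong \sN \otimes_{\bbZ[x]} R_{p,\beta}/pR_{p,\beta}.
\]
By \eqref{eq: ProfiniteZpbeta}, $R_{p,\beta}/pR_{p,\beta}$ is the $\beta$-adic completion $\widehat{\bbF_p[x]}_{(\beta)}$ of $\bbF_p[x]$. Since tensor product is right exact, we get
\[
\sN\otimes_{\bbZ[x]}\bbF_p[x] \cong \coker\bigl(\overline{\bM} - x\bI\bigr),
\]
where $\overline{\bM}$ is the reduction of $\bM$ modulo $p$ and the cokernel is taken over $\bbF_p[x]$. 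So the right-hand side of \eqref{eq:UsefulRag} equals the length, over the completed DVR $\widehat{\bbF_p[x]}_{(\beta)}$, of
\[
\coker\bigl(\overline{\bM}-x\bI\bigr)\otimes_{\bbF_p[x]}\widehat{\bbF_p[x]}_{(\beta)}.
\]

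\textbf{Step 2: Smith normal form.} The ring $\bbF_p[x]$ is a PID, so there are invertible $\bU,\bV$ over $\bbF_p[x]$ with
\[
\overline{\bM}-x\bI = \bU\operatorname{diag}(d_1,\ldots,d_n)\bV.
\]
This gives $\coker(\overline{\bM}-x\bI) \cong \oplus_i \bbF_p[x]/(d_i)$, as in \eqref{eq:ShyQuote}. Taking determinants yields
\[
\prod_i d_i = u\cdot\det(\overline{\bM}-x\bI) = \pm u\,\varphi_{\bM}(x) \bmod p
\]
for some unit $u\in\bbF_p^\times$. The determinant is a nonzero polynomial of degree $n$, so every $d_i$ is nonzero.

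\textbf{Step 3: localize at $\beta$.} Write $d_i = \beta^{k_i} e_i$ with $\beta\nmid e_i$. In the completion $e_i$ becomes a unit, because its residue modulo $\beta$ is nonzero in the field $\bbF_p[x]/(\beta)$, and a standard Hensel or geometric-series argument lifts this to invertibility. Hence
\[
\bigl(\bbF_p[x]/(d_i)\bigr)\otimes\widehat{\bbF_p[x]}_{(\beta)} \cong \widehat{\bbF_p[x]}_{(\beta)}/(\beta^{k_i}).
\]
Over a DVR with uniformizer $\beta$, this module has length $k_i$, using the chain $\beta^{j}\cdot/\beta^{k_i}$. Length is additive over direct sums, so the right-hand side of \eqref{eq:UsefulRag} equals $\sum_i k_i$. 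This sum is exactly the $\beta$-adic valuation of $\prod_i d_i$, which is the left-hand side of \eqref{eq:UsefulRag}.

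\textbf{Main obstacle.} The only delicate point is the identification in Step 1, namely that $R_{p,\beta}/pR_{p,\beta}$ is the $\beta$-adic completion of $\bbF_p[x]$, together with the fact that it is a DVR with uniformizer $\beta$. I would derive both from \eqref{eq: ProfiniteZpbeta}: elements are series $\sum_i c_i\beta^i$ with coefficients $c_i$ reduced modulo $p$, and $\bbF_p[x]/(\beta)$ is a field. Alternatively, one can avoid completions altogether. The quotient $\widehat{\bbF_p[x]}_{(\beta)}/(\beta^{k})$ is isomorphic to $\bbF_p[x]/(\beta^k)$, and since the cokernel here is finite-dimensional over $\bbF_p$, the length computation can be done entirely in $\bbF_p[x]/(\beta^k)$.
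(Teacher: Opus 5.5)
Your proposal is correct and is essentially the paper's proof. The paper reads off $\coker(\overline{\bM}-x\bI)\cong\prod_{i,j}\bbF_p[x]/\beta_i^{\lambda_{i,j}}\bbF_p[x]$ with $\sum_j\lambda_{i,j}=k_i$ from the rational canonical form, whereas you use the Smith normal form over $\bbF_p[x]$ and write each invariant factor as $d_i=\beta^{k_i}e_i$; both arguments then rest on the same steps: the structure theorem over $\bbF_p[x]$, base change to $R_{p,\beta}$ (which kills the parts coprime to $\beta$ and keeps the $\beta$-primary parts), and additivity of length.
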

\begin{proof} 
    Consider the factorization $\varphi_{\bM}(x) \equiv \prod_{i} \beta_i^{k_i} \bmod p$ of the characteristic polynomial into powers of distinct irreducible monic polynomials $\beta_i \in \bbF_p[x]$. 
    Then, the rational canonical form \cite[\S12.2]{dummit_foote_2004} of $\widebar{\bM} \de \bM \bmod p$ over $\bbF_p$ yields nonnegative integers  $\lambda_{i,j} \geq 0$  with $\sum_{j} \lambda_{i,j} = k_i$ and 
    \begin{equation}
        \frac{\coker\bigl(\bM - x\bI\bigr)}{p\coker\bigl(\bM - x\bI\bigr)}
        \cong 
        \coker\bigl(\widebar{\bM}-x\bI\bigr) 
        \cong 
        \prod_{i}\prod_{j} \frac{\bbF_p[x]}{\beta_i^{\lambda_{i,j}}\bbF_p[x]}.\label{eq:GiddyLobster}
    \end{equation}
    It follows directly from the definition of $R_{p,\beta}$ that $(\bbF_p[x]/\beta^{\lambda} \bbF_p[x]) \otimes_{\bbZ[x]} R_{p,\beta} = \bbF_p[x]/\beta^{\lambda} \bbF_p[x]$.
    Further, $(\bbF_p[x]/\gamma^{\lambda} \bbF_p[x])\otimes_{\bbZ[x]} R_{p,\beta}  = 0$ for $\gamma$ with coprime reduction to $\beta$.
    (The latter can be shown using that $\beta^\lambda$ is then invertible in $R_{p,\gamma}$, or as a consequence of the previous sentence with $\gamma = \beta$ and \Cref{lem: ModuleMpMpbeta}.) 
    Hence, taking a tensor product with $R_{p,\beta_i}$ in \eqref{eq:GiddyLobster}, 
    \begin{equation}
        \frac{\coker\bigl(\bM - x\bI\bigr)_{p,\beta_i}}{ p
            \coker\bigl(\bM - x\bI\bigr)_{p,\beta_i}} 
            \cong
        \prod_{j} \frac{\bbF_p[x]}{\beta_i^{\lambda_{i,j}}\bbF_p[x]}. \label{eq:UnripeUser}
\end{equation}
Note that $\operatorname{Length}_{R}(\prod_{j} \sN_j) = \sum_j \operatorname{Length}_{R}( \sN_j)$ for any finite product of $R$-modules. 
Further, the submodules of $\sN \de \bbF_p[x]/\beta^{\lambda} \bbF_p$ are $\beta^{j} \sN$ and consequently $\operatorname{Length}_{R_{p,\beta}/pR_{p,\beta}}(\sN) = \lambda$.  
Hence, using that $\sum_{j}\lambda_{i,j}=k_i$ with $k_i$ the power of $\beta_i$ in the factorization of $\varphi_{\bM}$ yields \eqref{eq:UsefulRag}.  
\end{proof}

\begin{proposition}\label{prop: ModuleRephrasingDiscriminantCondition}
    It holds that $p \nmid \Delta_{\bM}$ if and only if it holds for all monic $\beta(x) \in \bbZ[x]$ with irreducible reduction in $\bbF_p[x]$ that, as $\bbZ[x]$-modules,  
        \begin{equation}
            \frac{\coker\bigl(\bM - x\bI)_{p,\beta}}{p \coker\bigl(\bM - x\bI)_{p,\beta}} \cong 0 
            \qquad \textnormal{ or }\qquad 
            \frac{\coker\bigl(\bM - x\bI)_{p,\beta}}{p \coker\bigl(\bM - x\bI)_{p,\beta}}  \cong \frac{\bbF_p[x]}{\beta(x)\bbF_p[x] }. \label{eq:RipeRock}
        \end{equation}
\end{proposition}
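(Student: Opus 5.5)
The plan is to combine \Cref{prop: DiscriminantDivisibleP} with \Cref{lem: DivLength}. By \Cref{prop: DiscriminantDivisibleP} applied to the monic polynomial $\phi = \varphi_{\bM}$, we have $p\nmid \Delta_{\bM}$ if and only if $\varphi_{\bM} \bmod p$ is square-free over $\bbF_p$. Equivalently, for every monic $\beta\in\bbZ[x]$ with irreducible reduction modulo $p$, the multiplicity $k_\beta \de \max\{k\geq 0: \beta^k \mid \varphi_{\bM} \bmod p\}$ satisfies $k_\beta\in\{0,1\}$. Here it suffices to range over representatives $\beta$ since the statement only depends on $\beta \bmod p$.

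Next, I would translate the condition $k_\beta \in \{0,1\}$ into a statement about modules. By \Cref{lem: DivLength}, $k_\beta$ equals the length of $\sN_\beta \de \coker(\bM-x\bI)_{p,\beta}/p\coker(\bM-x\bI)_{p,\beta}$ over $R_{p,\beta}/pR_{p,\beta}$. Moreover, the proof of that lemma, specifically \eqref{eq:UnripeUser}, gives the explicit structure
\[
\sN_\beta \cong \prod_j \bbF_p[x]/\beta^{\lambda_{j}}\bbF_p[x], \qquad \sum_j \lambda_j = k_\beta .
\]
I would reuse this decomposition directly, rather than only the length statement.

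It then remains to check both directions.
\begin{enumerate}
    \item If $k_\beta = 0$, then every $\lambda_j = 0$, so $\sN_\beta\cong 0$.
    \item If $k_\beta = 1$, then exactly one $\lambda_j$ equals $1$ and the others vanish, so $\sN_\beta \cong \bbF_p[x]/\beta\bbF_p[x]$.
    \item Conversely, the zero module has length $0$. The module $\bbF_p[x]/\beta\bbF_p[x]$ is a field, hence a simple module of length $1$. By \Cref{lem: DivLength} this forces $k_\beta\in\{0,1\}$.
\end{enumerate}
Note that the isomorphisms in \eqref{eq:RipeRock} are as $\bbZ[x]$-modules, and that length over $R_{p,\beta}/pR_{p,\beta}$ is an isomorphism invariant. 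I must check that the $\bbZ[x]$-module isomorphism respects the $R_{p,\beta}$-structure. This holds because both modules are killed by $p$ and by a power of $\beta$, so the $R_{p,\beta}$-action factors through $\bbZ[x]/(p,\beta^N)$.

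The main, but mild, obstacle is this compatibility of structures: length is computed over $R_{p,\beta}/pR_{p,\beta}$, while the isomorphism is stated over $\bbZ[x]$. I would handle it with the observation just made. Alternatively, I could avoid length in the converse direction altogether by comparing $\bbF_p$-dimensions: $\dim_{\bbF_p}\sN_\beta = k_\beta \deg\beta$ by \eqref{eq:UnripeUser}, which again gives $k_\beta\in\{0,1\}$. Quantifying over all $\beta$ then completes the equivalence.
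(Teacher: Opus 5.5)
Your proposal is correct and follows the paper's own proof. \Cref{prop: DiscriminantDivisibleP} reduces the claim to $k_\beta\in\{0,1\}$ for every $\beta$, and \Cref{lem: DivLength} with the decomposition \eqref{eq:UnripeUser} shows this is exactly the condition that the quotient is one of the modules in \eqref{eq:RipeRock}; your remark on compatibility of the $\bbZ[x]$- and $R_{p,\beta}$-structures, or alternatively the $\bbF_p$-dimension count, just makes explicit a point the paper leaves implicit.
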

\begin{proof}
    Recall from \Cref{prop: DiscriminantDivisibleP} that $p\nmid \Delta_{\bM}$ if and only if $\beta^2 \nmid \varphi_{\bM} \bmod p$ for all irreducible $\beta \in \bbF_p[x]$.  
    The result is hence immediate from \Cref{lem: DivLength}. 
    Indeed, recall that $\coker(\bM-x\bI)_{p,\beta}/p\coker(\bM - x\bI)_{p,\beta}$ has the form \eqref{eq:UnripeUser} and note that the modules of length $<2$ are those in \eqref{eq:RipeRock}.
\end{proof}

\begin{lemma}
\label{lem: ExactPDivisibility}
 It holds that $p\parallel \det(\bM-a\bI)$ for $a\in \bbZ$ if and only if as $\bbZ[x]$-modules, 
    \begin{equation}
            \frac{\coker\bigl(\bM - x\bI)_{p,x-a}
            }{
            (x-a)
            \coker\bigl(\bM - x\bI)_{p,x-a}} \cong \frac{\bbF_{p}[x]}{(x-a)\bbF_{p}[x]}\label{eq:VagueInk}
    \end{equation}
\end{lemma}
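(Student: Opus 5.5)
The plan is to compute the left-hand side of \eqref{eq:VagueInk} directly as a cokernel of an integer matrix, and then apply the Smith normal form argument from the proof of \Cref{lem: psquared_to_cokergroup}.

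Write $\sM \de \coker(\bM - x\bI)$. Tensoring is right exact, so quotienting by $(x-a)$ commutes with forming cokernels. Hence
\begin{equation*}
    \sM/(x-a)\sM \cong \coker(\bM - x\bI,\,(x-a)\bI) \cong \coker\bigl(\bM - a\bI\bigr),
\end{equation*}
where the last cokernel is taken over $\bbZ[x]/(x-a)\cong \bbZ$. Concretely, substituting $x=a$ identifies $\bbZ[x]^n/(x-a)\bbZ[x]^n$ with $\bbZ^n$ and sends the image of $\bM-x\bI$ onto the image of $\bM-a\bI$. On this group, $x$ acts as multiplication by $a$.

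Next I localize at the maximal ideal $\mathfrak m = (p, x-a)$. Since $R_{p,x-a}$ is flat over $\bbZ[x]$ (it is a completion of a Noetherian ring, composed with localization), $(\cdot)\otimes R_{p,x-a}$ commutes with quotients. This gives
\begin{equation*}
    \sM_{p,x-a}/(x-a)\sM_{p,x-a} \cong \coker(\bM-a\bI)\otimes_{\bbZ[x]} R_{p,x-a}.
\end{equation*}
On $G\de\coker(\bM-a\bI)$, the element $x-a$ acts as zero. So the action of $R_{p,x-a}$ factors through $R_{p,x-a}/(x-a)\cong \bbZ_p$, and the tensor product is just $G\otimes_\bbZ \bbZ_p = G_p$, with $x$ acting by $a$.

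The step I expect to need the most care is the flatness and the identification $R_{p,x-a}/(x-a)R_{p,x-a}\cong\bbZ_p$. The latter can be read off from \eqref{eq: ProfiniteZpbeta} with $\beta = x-a$, where the coefficients $c_i$ are constants in $\bbZ_p$, so modding out by $\beta$ leaves $c_0$. I would also check compatibility with \Cref{lem: ModuleMpMpbeta}: since $x-a$ acts by zero on $G$, the summands $G_{p,\beta}$ with $\beta\not\equiv x-a \bmod p$ vanish, because $x-a$ is a unit in $R_{p,\beta}$. So all of $G_p$ lives in the $(p,x-a)$-piece. This gives an alternative route that avoids flatness: decompose $G_p\cong \oplus_\beta G_{p,\beta}$ and note that every piece except $\beta = x-a$ is killed.

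Finally I conclude as in \Cref{lem: psquared_to_cokergroup}. Take the Smith normal form $\bM - a\bI = \bU\,\mathrm{diag}(d_1,\dots,d_n)\bV$. Then $G_p\cong\oplus_i \bbZ/p^{e_i}\bbZ$ by \Cref{lem: GroupGp}, and $\det(\bM-a\bI)=\pm\prod d_i$. Therefore $p\parallel\det(\bM-a\bI)$ holds exactly when $\sum e_i = 1$, i.e. when $G_p\cong\bbZ/p\bbZ$ as a group. Note that $\det = 0$ gives $e_n=\infty$, which is correctly excluded. Since $x$ acts by $a$, the group isomorphism $G_p\cong\bbZ/p\bbZ$ is the same as the module isomorphism $G_p\cong \bbF_p[x]/(x-a)\bbF_p[x]$. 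This is \eqref{eq:VagueInk}.
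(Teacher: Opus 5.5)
Your proof is correct and takes essentially the same route as the paper. Both arguments identify $\coker(\bM-a\bI)\cong \sM/(x-a)\sM$, then use tensor-product associativity together with $R_{p,x-a}/(x-a)R_{p,x-a}\cong\bbZ_p[x]/(x-a)\bbZ_p[x]\cong\bbZ_p$ to get $\coker(\bM-a\bI)_p\cong \sM_{p,x-a}/(x-a)\sM_{p,x-a}$, and finish with the Smith normal form and \Cref{lem: GroupGp}; the step you flagged as delicate needs no flatness, since commuting $\otimes_{\bbZ[x]}R_{p,x-a}$ with the quotient by $(x-a)$ is just right exactness of the tensor product.
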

\begin{proof}
    It holds for any integer matrix $\bN \in \bbZ^{n\times n}$ that $\lvert \det(\bN)  \rvert=  \# \coker(\bN)$. 
    Indeed, this may be verified by considering the Smith normal form, similar to the proof of \Cref{lem: psquared_to_cokergroup}. 
    Hence, it follows from \Cref{lem: GroupGp} that $p\parallel \det(\bN)$ if and only if $\coker(\bN)_p \cong \bbF_p$ as a group.  
    It remains to show that $\coker(\bM-a\bI)_p \cong \bbF_p$ as groups if and only if \eqref{eq:VagueInk} holds. 

    Note that we have an isomorphism of groups $
        \coker(\bM - a\bI)
        \cong
            \coker(\bM - x\bI )
           /
            (x-a)
            \coker(\bM - x\bI )
            $.
        Hence, using the associativity of tensor products,
    \begin{equation}
       \coker\bigl(\bM - a\bI\bigr)\otimes_{\bbZ} \bbZ_p 
        \cong\coker\bigl(\bM - x\bI\bigr) \otimes_{\bbZ[x]} \frac{\bbZ_p[x]}{(x-a)\bbZ_p[x]}. \label{eq:EasyRock}
    \end{equation}
    It follows directly from the definition \eqref{eq: ProfiniteZpbeta} that $\bbZ_p[x]/(x-a)\bbZ_p[x]$ is a quotient of $R_{p,x-a}$. 
    In particular, $\bbZ_p[x]/(x-a)\bbZ_p[x] \cong R_{p,x-a}\otimes_{R_{p,x-a}}(\bbZ_p[x]/(x-a)\bbZ_p[x])$.  
    Hence, again using the associativity of tensor products and recalling the definition \eqref{eq: Def_NotationGpMpbeta}, 
    \begin{align}
        \coker(\bM - x\bI) \otimes_{\bbZ[x]} \frac{\bbZ_p[x]}{(x-a)\bbZ_p[x]} \cong \coker(\bM-x\bI)_{p,x-a} \otimes_{R_{p,x-a}}  \frac{\bbZ_p[x]}{(x-a)\bbZ_p[x]}.\label{eq:ValidSet} 
    \end{align}
    Combining \eqref{eq:EasyRock} and \eqref{eq:ValidSet} with the definition \eqref{eq: Def_NotationGpMpbeta}, we have the following isomorphism of groups: 
    \begin{align}
         \coker\bigl(\bM - a\bI)_p \cong   \frac{\coker(\bM-x\bI)_{p,x-a} }{(x-a)\coker(\bM-x\bI)_{p,x-a} }.\label{eq:LoudCamel} 
    \end{align}
    Note that the unique $\bbZ[x]$-module structure on the group $\bbF_p$ such that $x$ acts as multiplication by $a$ is given by $\bbF_p[x]/(x-a)\bbF_p[x]$. 
    It hence follows from \eqref{eq:LoudCamel} that $\coker(\bM-a\bI)_p \cong \bbF_p$ as a group if and only if \eqref{eq:VagueInk} occurs, concluding the proof. 
\end{proof}

    \begin{proposition}\label{prop: ModuleRephrasingDiscriminantConditionTwo}
     Consider an odd prime $p$. 
     Then, it holds that $p  \parallel \Delta_{\bM}$ if and only if there exists $a\in \bbZ$ such that \eqref{eq:RipeRock} is satisfied for every $\beta \not\equiv x - a \bmod p$ and it holds that, as $\bbZ[x]$-modules, 
        \begin{equation}
            \frac{\coker\bigl(\bM - x\bI)_{p,x-a}
            }{
            p
            \coker\bigl(\bM - x\bI)_{p,x-a}} \cong \frac{\bbF_{p}[x]}{(x-a)^2\bbF_{p}[x]}  \ \ \textnormal{ and }\ \ \frac{\coker\bigl(\bM - x\bI)_{p,x-a}
            }{
            (x-a)
            \coker\bigl(\bM - x\bI)_{p,x-a}} \cong \frac{\bbF_{p}[x]}{(x-a)\bbF_{p}[x]}.
            \label{eq:RipeRockTwo}
        \end{equation}
     \end{proposition}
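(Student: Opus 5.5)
The plan is to combine \Cref{prop: OddSquaredDividesDiscriminant}, applied to $\phi = \varphi_{\bM}$, with the module-theoretic dictionary from \Cref{lem: DivLength}, \Cref{prop: ModuleRephrasingDiscriminantCondition}, and \Cref{lem: ExactPDivisibility}. By \Cref{prop: OddSquaredDividesDiscriminant}, $p \parallel \Delta_{\bM}$ holds if and only if there is $a\in\bbZ$ satisfying three conditions. First, $\varphi_{\bM} \equiv (x-a)^2\xi \bmod p$ with $\xi$ square-free and $\xi(a)\not\equiv 0$. Second, $p^2\nmid \varphi_{\bM}(a)$. Third, note that $\varphi_{\bM}(a) = \det(a\bI - \bM) = \pm\det(\bM - a\bI)$, so the second condition says $p^2 \nmid \det(\bM-a\bI)$. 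The first condition already forces $p \mid \det(\bM-a\bI)$, so together they say $p\parallel \det(\bM - a\bI)$. We therefore translate the factorization condition and the exact divisibility separately.

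\textbf{Factorization condition.} It says that the multiplicity of $x-a$ in $\varphi_{\bM}\bmod p$ is exactly $2$, and that every other irreducible $\beta\not\equiv x-a$ has multiplicity at most $1$.
\begin{itemize}
\item For $\beta \not\equiv x-a$: by \Cref{lem: DivLength} and the description \eqref{eq:UnripeUser}, multiplicity at most $1$ is equivalent to the quotient $\coker(\bM-x\bI)_{p,\beta}/p\coker(\bM-x\bI)_{p,\beta}$ having length at most $1$. This is exactly \eqref{eq:RipeRock}, as in the proof of \Cref{prop: ModuleRephrasingDiscriminantCondition}.
\item For $\beta = x-a$: multiplicity exactly $2$ means, via \eqref{eq:UnripeUser}, that the quotient modulo $p$ is either $\bbF_p[x]/(x-a)^2$ or $(\bbF_p[x]/(x-a))^2$. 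These are the two partitions $\lambda$ of $2$.
\end{itemize}

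\textbf{Exact divisibility.} By \Cref{lem: ExactPDivisibility}, $p\parallel\det(\bM-a\bI)$ is equivalent to the second isomorphism in \eqref{eq:RipeRockTwo}.

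It remains to check that, given this second isomorphism, the case $(\bbF_p[x]/(x-a))^2$ is excluded, leaving only the first isomorphism in \eqref{eq:RipeRockTwo}. Write $\sN \de \coker(\bM-x\bI)_{p,x-a}$, and consider the double quotient
\[
\sN/(p,\,x-a)\sN .
\]
This module can be computed in two ways.
\begin{itemize}
\item Quotient first by $p$, then by $x-a$. In the case $\sN/p\sN \cong (\bbF_p[x]/(x-a))^2$, this gives $\bbF_p^2$.
\item Quotient first by $x-a$, then by $p$. If $\sN/(x-a)\sN\cong\bbF_p$, this gives $\bbF_p$.
\end{itemize}
These are incompatible, so that case cannot occur. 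Conversely, if $\sN/p\sN\cong\bbF_p[x]/(x-a)^2$, the double quotient is $\bbF_p$, which is consistent with \Cref{lem: ExactPDivisibility}; no further constraint arises. So under the exact-divisibility condition, multiplicity exactly $2$ is equivalent to the first isomorphism in \eqref{eq:RipeRockTwo}. This proves the forward direction.

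\textbf{Converse.} Assume \eqref{eq:RipeRock} holds for all $\beta\not\equiv x-a$ and \eqref{eq:RipeRockTwo} holds. Then:
\begin{itemize}
\item by \Cref{lem: DivLength}, $\varphi_{\bM}\equiv (x-a)^2\xi \bmod p$ with $\xi$ square-free and coprime to $x-a$, so $\xi(a)\not\equiv 0$;
\item by \Cref{lem: ExactPDivisibility}, $p^2\nmid\det(\bM-a\bI)$.
\end{itemize}
\Cref{prop: OddSquaredDividesDiscriminant} then gives $p\parallel\Delta_{\bM}$. Here the oddness of $p$ is used only through that proposition.

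The main obstacle is the compatibility step ruling out $(\bbF_p[x]/(x-a))^2$. The concern is whether $\sN/(p,x-a)\sN$ is genuinely computed by both orders of quotienting. It is, since the two quotients commute: both equal $\sN\otimes R_{p,x-a}/(p,x-a)$. The remaining bookkeeping is the sign $\varphi_{\bM}(a)=\pm\det(\bM-a\bI)$, which is harmless for divisibility.
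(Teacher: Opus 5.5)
Your proposal is correct and follows essentially the same route as the paper's proof. Both combine Proposition~\ref{prop: OddSquaredDividesDiscriminant} with Lemma~\ref{lem: DivLength} (multiplicity exactly $2$ means the quotient modulo $p$ is $\bbF_p[x]/(x-a)^2\bbF_p[x]$ or $(\bbF_p[x]/(x-a)\bbF_p[x])^2$) and Lemma~\ref{lem: ExactPDivisibility}, and then exclude the second case by computing $\sN/(p\sN+(x-a)\sN)$ in two different orders.
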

\begin{proof}  
    Recall that \Cref{prop: OddSquaredDividesDiscriminant} states that $p\parallel \Delta_{\bM}$ if and only if there exists $a\in \bbZ$ with $p\parallel \varphi_{\bM}(a)$ such that $\varphi_{\bM}(x) \equiv (x-a)^2 \xi(x) \bmod p$ for square-free $\xi \in \bbF_p[x]$ with $\xi(a)\not\equiv 0 \bmod p$. 
    Here, as in the proof of \Cref{prop: ModuleRephrasingDiscriminantCondition} we have that \eqref{eq:RipeRock} holds if and only if $\beta^2 \nmid \varphi_{\bM} \bmod p$. 
    It hence remains to argue that \eqref{eq:RipeRockTwo} is equivalent to having $p\parallel \varphi_{\bM}(a)$ and $\max\{k\geq 0: (x-a)^k \mid \varphi_{\bM}\bmod p \}= 2$.

    \Cref{lem: DivLength} yields that $\max\{k\geq 0: (x-a)^k \mid \varphi_{\bM}\bmod p \}= 2$ if and only if 
    \begin{equation}
    \label{eqn: ModulesLength2}
            \frac{\coker\bigl(\bM - x\bI)_{p,x-a}}{p \coker\bigl(\bM - x\bI)_{p,x-a}} \cong \left(\frac{\bbF_p[x]}{(x-a)\bbF_p[x] }
            \right)^2
            \quad \textnormal{ or }\quad 
            \frac{\coker\bigl(\bM - x\bI)_{p,x-a}}{p \coker\bigl(\bM - x\bI)_{p,x-a}}  \cong \frac{\bbF_p[x]}{(x-a)^2\bbF_p[x] }. 
        \end{equation}
        Moreover, recall from \Cref{lem: ExactPDivisibility} that $p\parallel \varphi_{\bM}(a)$ if and only if 
        \begin{equation}
            \frac{\coker\bigl(\bM - x\bI)_{p,x-a}
            }{
            (x-a)
            \coker\bigl(\bM - x\bI)_{p,x-a}} \cong \frac{\bbF_{p}[x]}{(x-a)\bbF_{p}[x]}.\label{eq:WittyKnot}
        \end{equation} 
        It hence remains to show that a $R_{p,x-a}$-module $N$ cannot simultaneously satisfy $N/pN \cong (\bbF[x]/(x-a)\bbF_p[x])^2$ and $N/(x-a)N \cong \bbF_p[x]/(x-a)\bbF_p[x]$. 
        Indeed, if such a module were to exist, then taking additional quotients would yield that $N/(pN + (x-a)N) \cong (\bbF[x]/(x-a)\bbF_p[x])^2$ and $N/(x-a)N$ $N/(pN + (x-a)N) \cong \bbF_p[x]/(x-a)\bbF_p[x]$, a contradiction. 
        This concludes the proof. 
\end{proof}

\begin{remark}\label{rem: CokernelDisc}
    There are also different ways to relate the discriminant to cokernels: 
    \begin{enumerate}
        \item It follows from \eqref{eq:MoistCity} and \Cref{def: Discriminant} that $\lvert \Delta_{\phi} \rvert =  \#\coker(\cS_{\phi, \phi'})$. 
        \item It follows from \eqref{eq: DiscriminantRootFormula} that $\Delta_{\bM} = \det(\varphi_{\bM}'(\bM))$ with $\varphi_{\bM}'\in \bbZ[x]$ the derivative of the characteristic polynomial. 
        This implies that $\lvert \Delta_{\bM} \rvert =  \# \coker(\varphi_{\bM}'(\bM))$. 
    \end{enumerate}
    It unclear whether these alternative perspectives are helpful in probabilistic study.
    For instance, while the law of $\coker(P(\bM))$ with $P\in \bbZ[x]$ deterministic has been studied in the previous works (recall \Cref{sec: IntroUniversality}), the polynomial $\varphi_{\bM}'$ has nontrivial dependence on $\bM$.
    \Cref{prop: ModuleRephrasingDiscriminantCondition,prop: ModuleRephrasingDiscriminantConditionTwo} have the advantage that they depend on $\bM$ in a linear way.
\end{remark}

\section{Module statistics for profinite random matrix ensembles}\label{sec: ComputationAnalytical}

We found in \Cref{sec: Rephrasing} that a unifying feature of the sufficient conditions in Theorems \ref{thm: WalkSufficientConditionWithLoops} and \ref{thm: DiscriminantCondition} is that they can both be rephrased in the $\bbZ[x]$-module structure of $\coker(\bM-x\bI)$. 
To understand the satisfaction frequency of the conditions, it hence remains to understand the distribution of the module when $\bM$ is a random matrix. 
We here do this for a matrix ensemble with entries Haar distributed on the profinite completion of $\bbZ[x]$. 
Our main results are \Cref{thm: HaarMainWalk,thm: MainHaarDisc}.

\subsection{Model definition}\label{sec: ModelDefinition}
Throughout this section, we denote $R \de \bbZ[x]$. 
Recall the definition of the profinite completion $\widehat{R}$ and its Haar distribution from \Cref{sec: Background}.

Denote $\operatorname{SymHaar}(\widehat{R}^{n\times n})$ for the probability distributions on symmetric random matrices $\bM$ with values in $\widehat{R}^{n\times n}$ whose entries are Haar distributed and independent up to the symmetry constraint. 
That is, $\bM = \bM^{\T}$ and the upper-triangle satisfies
\begin{align}
    \bbP\Bigl(\bM_{i,j} \in \cE_{i,j}: \forall i \leq j \Bigr) = \prod_{i\leq j} \bbP\Bigl( \operatorname{Haar}\bigl(\widehat{R}\bigr) \in \cE_{i,j}\Bigr)\label{eq: DefSymHaar}
\end{align}
for all measurable subsets $\cE_{i,j} \subseteq \widehat{R}$.
We shall study the $\widehat{R}$-module $\coker(\bM - x\bI)$ as a model for the modules appearing in \Cref{prop: ModuleRephrasingWalkDet,prop: ModuleRephrasingDiscriminantCondition,prop: ModuleRephrasingDiscriminantConditionTwo}.
Thus, instead of a matrix with entries from $\bbZ$ or $\{0,1 \}$ we now consider entries from profinite completion of $\bbZ[x]$.    
Most crucially, the model $\bM$ retains the symmetry constraint that is necessary for the sufficient conditions for spectral characterization of integer matrices in \Cref{thm: WalkSufficientConditionWithLoops,thm: DiscriminantCondition}.

Given that the Haar measure is preserved by additive shifts, the matrix $\bM - x \bI$ has exactly the same distribution as the original matrix $\bM$. 
In particular, the cokernels have the same distributions:
\begin{equation}
    \bbP\bigl(\coker(\bM -x\bI  ) \in \cdot \bigr)  = \bbP\bigl(\coker(\bM   ) \in \cdot \bigr)\ \textnormal{ if }\ \bM \sim \operatorname{SymHaar}(\widehat{R}).\label{eq:WeepyJar}
\end{equation}
Recall from \Cref{sec: SpecialCaseProfiniteZ_Z[x]} that $\widehat{R} \cong \prod_{p}\prod_{\beta(x)}R_{p,\beta}$.
Let $\bM_{p,\beta}\in R_{p,\beta}^{n\times n}$ be the matrices that are found by applying this isomorphism entry-wise to $\bM$.
Then, since the cokernel of a matrix over a product ring always decomposes as a product corresponding to the factors,  
\begin{equation}
    \textstyle\coker(\bM) \cong \prod_{p}\prod_{\beta(x)}  \coker\bigl(\bM_{p,\beta} \bigr)\ \textnormal{ where }\ \bM_{p,\beta} \sim \operatorname{SymHaar}(R_{p,\beta}^{n\times n}), \label{eq:BoldAnt}   
\end{equation}  
where the isomorphism is as $\widehat{R}$-modules. 
It here holds that $\coker\bigl(\bM_{p,\beta}\bigr) \cong \coker(\bM )\otimes_{\widehat{R}} R_{p,\beta}$ so that the pieces in the decomposition \eqref{eq:BoldAnt} are analogous to the pieces of $\bbZ[x]$-modules defined in \eqref{eq: Def_NotationGpMpbeta}, except that the tensor product is now over the profinite completion instead of over $\bbZ[x]$. 
Our goal is to determine the probability that the constraints in \Cref{prop: ModuleRephrasingWalkDet,prop: ModuleRephrasingDiscriminantCondition,prop: ModuleRephrasingDiscriminantConditionTwo} are satisfied when the modules $\coker(\cdot)_{p,\beta}$ are formally replaced by the corresponding pieces in \eqref{eq:BoldAnt}. 

\begin{remark}\label{rem: AlternativeConjectureWithoutSymmetry}
    Removing the symmetry constraint from the profinite Haar model can also be used to give an alternative justification for a conjecture in \cite{van2025cokernel} concerning walk matrix statistics without symmetry constraint.
    That argument does not directly contribute to our main goal, as the symmetry constraint is essential for \Cref{thm: HaarMainWalk,thm: MainHaarDisc} to apply, except that the fact that one can recover previous conjectures may give some additional confidence in the modeling approach of the present paper. 
    We give details on the alternative argument for \cite[Conjecture 1.4]{van2025cokernel} in \Cref{sec: ConjWithoutSymmetry}. 
\end{remark}

\begin{remark}
\label{rem: Ix clarifying remark}
   While the translational invariance of the Haar distribution in \eqref{eq:WeepyJar} also applies for any other symmetric shift, we expect that the specific ocurrence of $x\bI$ in $\coker(\bM - x\bI)$ will be important in universality principles that allow different laws for the entries of $\bM$. 
   For example, Kahn and Koml\'os \cite[Theorem 1.3]{kahn2001singularity} show that the rank of a random matrix over a finite field is only universal if the entries are not in a proper affine subfield. 
   The latter implies that $\coker(\bM -x\bJ) \otimes_{\widehat{R}} \bbF_p[x]/\beta\bbF_p[x]$ with $\bJ$ the all-ones matrix is \emph{not} universal when $\bM$ has integer entries without symmetry constraint since $\bM - x\bJ \bmod p, \beta$ then has all entries in the same affine subfield $\bbF_p + x$ of $\bbF_p[x]/\beta \bbF_p[x]$, which is proper when $\operatorname{deg}(\beta)>1$.
    The presence of the identity in $x\bI$ is indeed used in previous arguments on universality of $\bbZ[x]$-modules for random matrices without symmetry constraints \cite{cheong2023distribution,van2025cokernel}. 
\end{remark}

\subsection{General reduction result}
Since the Haar measure on $\widehat{R}$ induces independent elements on the different factors, the pieces in the decomposition \eqref{eq:BoldAnt} are independent. 
It hence suffices to study their distributions separately. 
The key fact for this purpose will be that $R_{p,\beta}$ is a compact Noetherian local ring.
We present our arguments in this general setting and return to $R_{p,\beta}$ in \Cref{cor: ReductionBounded}.

All rings in the present paper are commutative and unital.
Recall that a ring is \emph{Noetherian} if all ideals are finitely generated. 
A \emph{local ring} is a ring $\sO$ with a unique maximal ideal $\mathfrak{m}\subsetneq\sO$.
Krull's intersection theorem yields that $\cap_{k>0} \mathfrak{m}^k = 0$ for any Noetherian local ring \cite[Corollary 10.19]{atiyah2018introduction}, so we can define a metric on $\sO$ by $d(x,y) \de 2^{-\nu(x-y)}$ with $\nu(r) \de \sup\{k\geq 0:r \in \mathfrak{m}^k   \}$. 
From here on, we fix a Noetherian local ring $\sO$. 
We further assume that $\sO$ is compact as a metric space, which is equivalent to the the residue field $\sO/\mathfrak{m}$ being finite.\footnote{
Indeed, note that $\mathfrak{m}^j/\mathfrak{m}^{j+1}$ is a finite-dimensional vector space over $\sO/\mathfrak{m}$ by Noetherianity and hence $\sO/\mathfrak{m}^j$ is finite if and only if $\sO/\mathfrak{m}$ is so. 
That $\sO/\mathfrak{m}^j$ is finite for all $j$ is equivalent to compactness of $\sO$, as may be verified directly from the definition of the metric on the local ring.\label{foot: Local}
} 
In particular, $(\sO,+)$ is then a compact Hausdorff topological group and hence admits a unique Haar probability measure. 
Denote $q \de \# \sO/\mathfrak{m}$. 
We use $\rank_q(\cdot)$ to refer to the rank over $\bbF_q \cong \sO/\mathfrak{m}$ of the reduction modulo $\mathfrak{m}$ of a matrix over $\sO$.

Now consider a random matrix $\bM \sim \operatorname{SymHaar}(\sO^{n\times n})$. 
Then, the reduction $\bM\bmod \mathfrak{m}$ is a uniform random symmetric matrix over $\bbF_q$. 
The number of symmetric matrices over a finite field with a given rank is classical, dating back to Carlitz \cite{carlitz1954representations} for odd $q$, and MacWilliams \cite{macwilliams1969orthogonal} for both odd and even $q$.  
It follows from their results (see \eg \cite[Eq.(20)]{fulman2015stein}) that for any fixed integer $k\geq 0$,  
\begin{align}
    \lim_{n\to \infty}\bbP\Bigl(\operatorname{rank}_{q}\bigl(\bM\bigr) = n-k \Bigr) = \frac{\prod_{i\geq 0}\bigl(1 - q^{-2i-1}\bigr)}{\prod_{i=1}^k(q^i -1)}, \label{eq:BlueClaw}
\end{align}
where the empty product yields unity if $k=0$. 
The goal in the remainder of this section is to understand the distribution of the cokernel conditional on the rank. 
This is achieved in \Cref{prop: Reduction} which yields a reduction to matrices of bounded dimensionality and is the core mathematical ingredient for the subsequent computations. 
First, we consider preliminaries in \Cref{lem: BlockDiagonalDecomposition,lem: InvarianceSymHaar}.  

\begin{lemma}\label{lem: BlockDiagonalDecomposition}
    Consider a matrix $\bM \in \sO^{n\times n}$ that is symmetric, $\bM = \bM^{\T}$. 
    Then, for every $k\leq n$, it holds that $\rank_q(\bM) = n-k$ if and only if there exists invertible $\bG \in \operatorname{GL}_n(\sO)$ such that 
    \begin{align}
        \bG^{\T}\bM\bG = \begin{pmatrix}
            \bQ & 0 \\ 
            0 & \bK
        \end{pmatrix}\label{eq:SmartOak}
    \end{align}
    for some symmetric $\bK \in \mathfrak{m}^{k\times k}$ and invertible symmetric $\bQ \in \operatorname{GL}_{n-k}(\sO)$.    
\end{lemma}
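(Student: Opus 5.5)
The reverse implication is the easy one, so I would dispose of it first. Suppose \eqref{eq:SmartOak} holds with $\bG$ invertible. Reduce modulo $\mathfrak{m}$. Since $\bG \bmod \mathfrak{m}$ is invertible over $\bbF_q$ and $\bK \equiv 0 \bmod \mathfrak{m}$, we get $\rank_q(\bM) = \rank_q(\bQ)$. Because $\bQ$ is invertible over $\sO$, its reduction is invertible over $\bbF_q$, so $\rank_q(\bQ) = n-k$.

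For the forward implication I would run a symmetric Gaussian elimination over $\sO$, peeling off invertible blocks of size one or two. Set $\widebar{\bM} \de \bM \bmod \mathfrak{m}$, and suppose $\rank_q(\bM) = n-k < n$ with $n-k \geq 1$; when $n-k = 0$ there is nothing to prove, taking $\bG = \bI$. We look for a symmetric invertible pivot block, and there are two cases.

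\textbf{Case 1.} Some diagonal entry $\bM_{i,i}$ is a unit. Permute it to position $(1,1)$ and write
\begin{equation*}
\bM = \begin{pmatrix} u & b^{\T}\\ b & \bM'\end{pmatrix}, \qquad u \in \sO^\times .
\end{equation*}
Take $\bG = \begin{pmatrix} 1 & -u^{-1}b^{\T}\\ 0 & \bI\end{pmatrix}$. Then $\bG^{\T}\bM\bG = \operatorname{diag}(u,\ \bM' - u^{-1}bb^{\T})$, and the second block is symmetric.

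\textbf{Case 2.} All diagonal entries lie in $\mathfrak{m}$. Since $\widebar{\bM} \neq 0$, some off-diagonal entry $\bM_{i,j}$ with $i \neq j$ is a unit. Permute so that the leading $2\times 2$ block is
\begin{equation*}
\bP = \begin{pmatrix} \bM_{i,i} & \bM_{i,j}\\ \bM_{i,j} & \bM_{j,j}\end{pmatrix}.
\end{equation*}
Its determinant is $\bM_{i,i}\bM_{j,j} - \bM_{i,j}^2 \equiv -\bM_{i,j}^2 \not\equiv 0 \bmod \mathfrak{m}$, so $\bP$ is invertible over $\sO$. The analogous block transformation $\bG = \begin{pmatrix} \bI & -\bP^{-1}\bB^{\T}\\ 0 & \bI\end{pmatrix}$ gives $\operatorname{diag}(\bP,\ \bM' - \bB\bP^{-1}\bB^{\T})$, again with a symmetric Schur complement.

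In both cases we iterate on the Schur complement. Each step splits off an invertible symmetric block of size $s \in \{1,2\}$. Over the residue field the rank is additive across the block-diagonal decomposition, and congruence by $\bG$ preserves $\rank_q$. Hence the residual block has $\rank_q$ equal to the old rank minus $s$. We stop once the residual block has rank $0$ modulo $\mathfrak{m}$, meaning all its entries lie in $\mathfrak{m}$. At that point the residual block is $k\times k$, because the accumulated invertible part has size equal to the total rank $n-k$. The accumulated pivot blocks form a block-diagonal matrix $\bQ$, which is symmetric and invertible. Composing all the congruences, including the permutations, yields a single $\bG \in \operatorname{GL}_n(\sO)$, which is a product of invertible matrices.

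The only delicate point is the characteristic-$2$ situation. For even $q$ an invertible symmetric matrix over $\bbF_q$ may be alternating, with zero diagonal, so unit diagonal pivots can fail to exist. Case 2 is exactly what handles this, and it requires neither $2$ to be invertible nor the diagonal to be nonzero. Because of this I do not expect a genuine obstacle. Note that the statement does not ask for $\bQ$ to be diagonal, so allowing $2\times 2$ blocks is enough.
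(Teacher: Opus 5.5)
Your proof is correct, but it takes a different route from the paper's.

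You run an iterative symmetric Gaussian elimination over $\sO$, splitting off invertible $1\times 1$ or $2\times 2$ pivot blocks one at a time. You need the case split (unit diagonal entry versus unit off-diagonal entry) precisely to cope with alternating forms when $q$ is even.

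The paper works in one shot instead. It first chooses $\bG_0\in\operatorname{GL}_n(\bbF_q)$ whose last $k$ columns span the kernel of $\bM \bmod \mathfrak{m}$, so that $\bG_0^{\T}\bM\bG_0 \equiv \operatorname{diag}(\bQ_0,0) \bmod \mathfrak{m}$ with $\bQ_0$ invertible. It then lifts $\bG_0$ to $\sO$ and clears both off-diagonal blocks with a single block Schur complement, using the invertible $(n-k)\times(n-k)$ upper-left block $\bQ_1 = \bQ_0 + \bD_{1,1}$. That argument only uses the kernel over the residue field and the invertibility of the complementary block, so it needs no pivot search and has no parity-of-$q$ issue.

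Your approach buys extra structure, since $\bQ$ comes out block diagonal with blocks of size at most $2$, and it is an explicit algorithm. The paper's approach buys brevity. More importantly for the rest of the paper, it produces one reusable formula, \eqref{eq:VagueDad}, for clearing the off-diagonal blocks of a perturbation $\operatorname{diag}(\bQ,\bK)+\bD$ whose entries of $\bD$ lie in $\mathfrak{m}$. That formula is reused verbatim in the proof of \Cref{prop: Reduction}.
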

\begin{proof}
   A square matrix over $\sO$ is invertible if and only if its reduction modulo $\mathfrak{m}$ is so over $\bbF_q \cong \sO/\mathfrak{m}$. 
   (This follows from the adjoint formula for the inverse by using that the invertible elements of a local ring are those that are not in the maximal ideal to the determinant.
   Alternatively, one can use Nakayama's lemma.) 
   That $\rank_{q}(\bM) = n-k$ whenever \eqref{eq:SmartOak} is satisfied is hence immediate by taking the reduction modulo $\mathfrak{m}$ since $\bG$ and $\bQ$ then reduce to invertible matrices over $\bbF_q$. 

   Now suppose that $\rank_q(\bM) = n-k$. 
   To start, we then claim that there then exist invertible matrices $\bG_0 \in \operatorname{GL}_n(\bbF_q)$ and $\bQ_0 \in \operatorname{GL}_{n-k}(\bbF_q)$ over $\bbF_q$ with  
   \begin{align}
         \bG_0^{\T}\bM \bG_0  \equiv  
         \begin{pmatrix}
            \bQ_0 & 0\\ 
            0 & 0
        \end{pmatrix} \mod \mathfrak{m}.\label{eq:GoodCamel} 
    \end{align} 
    Indeed, let $e_1,\ldots,e_n \in \bbF_q^n$ be the standard basis and note that the assumption on the rank of $\bM$ ensures that there exists some invertible $\bG_0\in \operatorname{GL}_n(\bbF_q)$ that maps the final $k$ basis vectors into the kernel of $\bM \bmod \mathfrak{m}$.
    That is, with $\bM \bG_0 e_{n-i} = 0$ over $\bbF_q$ for all $i<k$. 
    Then also $e_{n-i}^{\T}\bG_0^{\T} \bM  =0$ by taking the transpose. 
    Thus, the final $k$ rows and columns of $\bG_0^{\T}\bM \bG_0$ are zero, and the remaining $(n-k)\times (n-k)$ block has to be invertible since $\bM_0$ has rank $n-k$. 

    Pick arbitrary lifts of $\bG_0$ and $\bQ_0$ to matrices over $\sO$, subject to the constraint that $\bQ_0$ remains symmetric. 
    Then, these lifts are invertible over $\sO$. 
    Moreover, \eqref{eq:GoodCamel} means that 
    \begin{align}
         \bG_0^{\T}\bM \bG_0  = 
         \begin{pmatrix}
            \bQ_0 & 0\\ 
            0 & 0
        \end{pmatrix} 
        + 
        \begin{pmatrix}
            \bD_{1,1} & \bD_{1,2}\\ 
            \bD_{1,2}^{\T} & \bD_{2,2}
        \end{pmatrix}\label{eq:ZombieGym} 
    \end{align}
    for certain matrices $\bD_{i,j}$ of appropriate dimension whose entries are in $\mathfrak{m}$ and where the diagonal matrices are symmetric, \ie $\bD_{i,i} = \bD_{i,i}^{\T}$.  
    Then,  with $\bI$ the identity matrix and $\bQ_1 \de \bQ_0 + \bD_{1,1}$,
    \begin{align}
        \bG_1^{\T}\bG_0^{\T}\bM \bG_0 \bG_{1} = \begin{pmatrix}
            \bQ_0 + \bD_{1,1} & 0 \\ 
            0 & \bD_{2,2} - \bD_{1,2}^{\T}\bQ_1^{-1} \bD_{1,2} 
        \end{pmatrix} 
        \quad 
        \textnormal{ with }
        \quad
        \bG_1 \de 
        \bI -
        \begin{pmatrix}
            0 & \bQ_1^{-1} \bD_{1,2}\\ 
            0 & 0 
        \end{pmatrix}.\label{eq:VagueDad}
    \end{align}
    Let $\bG = \bG_0 \bG_1$ and $\bQ \de \bQ_1$, and take $\bK =\bD_{2,2} - \bD_{1,2}^{\T}\bQ_1^{-1} \bD_{1,2}$ to conclude.  
\end{proof}
Let it be understood that the \emph{Haar measure on an ideal $\mathfrak{a} \subseteq\sO$} refers to the the unique probability measure that is preserved by additive translation with elements from $\mathfrak{a}$.
If $a_1,\ldots,a_r \in \mathfrak{a}$ are generators for the ideal, then this distribution corresponds to the law of $a_1H_1 + \cdots +a_k H_k$ with $H_i$ independent $\operatorname{Haar}(\sO)$-distributed elements.   
Define a probability distribution $\operatorname{SymHaar}(\mathfrak{a}^{n\times n})$ on symmetric matrices exactly like in \eqref{eq: DefSymHaar}.  

\begin{lemma}[Invariance property]\label{lem: InvarianceSymHaar}
     Suppose that $\bM \sim \operatorname{SymHaar}(\mathfrak{a}^{n\times n})$ and consider some deterministic invertible matrix $\bG \in \operatorname{GL}_n(\sO)$. 
     Then, the matrix $\bG^{\T} \bM \bG$ is again $\operatorname{SymHaar}(\mathfrak{a}^{n\times n})$-distributed.
\end{lemma}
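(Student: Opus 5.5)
The plan is to identify $\operatorname{SymHaar}(\mathfrak{a}^{n\times n})$ with the Haar measure on a compact abelian group, and to show that $\bM\mapsto \bG^{\T}\bM\bG$ is a continuous group automorphism of that group. Since Haar measure is unique, it must then be preserved by any such automorphism.

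Concretely, let $\mathcal{S}(\mathfrak{a}) \de \{\bM\in \mathfrak{a}^{n\times n}: \bM = \bM^{\T}\}$. As an additive group, $\mathcal{S}(\mathfrak{a})$ is isomorphic to $\mathfrak{a}^{n(n+1)/2}$ via the upper-triangular entries $(\bM_{i,j})_{i\leq j}$.

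The first step is to check that $\mathfrak{a}$ is closed in $\sO$ and hence compact. Since $\sO$ is Noetherian, $\mathfrak{a}$ is the image of the continuous map $\sO^r\to\sO$ given by $(h_1,\ldots,h_r)\mapsto \sum a_i h_i$ from a compact space, so it is compact. The Haar measure on the ideal $\mathfrak{a}$, as defined before the statement, is therefore the normalized Haar measure of the compact group $(\mathfrak{a},+)$.

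The product of Haar measures on a finite product of compact groups is the Haar measure of the product, because it is translation invariant in each coordinate and Haar measure is unique. Consequently, by \eqref{eq: DefSymHaar}, $\operatorname{SymHaar}(\mathfrak{a}^{n\times n})$ is exactly the Haar probability measure on the compact group $(\mathcal{S}(\mathfrak{a}),+)$.

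Next, consider $T(\bM) \de \bG^{\T}\bM\bG$. This map has the following properties:
\begin{enumerate}
    \item It preserves symmetry.
    \item It maps $\mathfrak{a}^{n\times n}$ into itself, since each entry of $T(\bM)$ is an $\sO$-linear combination of entries of $\bM$ and $\mathfrak{a}$ is an ideal.
    \item It is additive.
    \item It is continuous, being polynomial in the entries, with multiplication continuous in the $\mathfrak{m}$-adic metric because $\mathfrak{m}^i\mathfrak{m}^j\subseteq \mathfrak{m}^{i+j}$.
    \item It is bijective on $\mathcal{S}(\mathfrak{a})$, with inverse $\bM\mapsto (\bG^{-1})^{\T}\bM\bG^{-1}$ of the same form.
\end{enumerate}

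Let $\mu$ be the Haar measure and $\nu \de \mu\circ T^{-1}$ its pushforward. For any $\bA\in \mathcal{S}(\mathfrak{a})$ and Borel $E$, we have $\nu(E+\bA) = \mu(T^{-1}(E) + T^{-1}(\bA)) = \mu(T^{-1}E) = \nu(E)$, using additivity of $T^{-1}$. So $\nu$ is a translation-invariant Borel probability measure, and uniqueness of Haar measure gives $\nu=\mu$, which is the claim.

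The main point requiring care is the identification of the informally defined Haar measure on $\mathfrak{a}$ with the Haar measure of a compact group, i.e., compactness of $\mathfrak{a}$. If that is in doubt, an alternative is to argue on finite quotients. Reduce modulo $\mathfrak{m}^j$ and note that the law of $\bM \bmod \mathfrak{m}^j$ is uniform on the finite group $\mathcal{S}\bigl((\mathfrak{a}+\mathfrak{m}^j)/\mathfrak{m}^j\bigr)$. The map $T$ induces a bijection of that finite group and hence preserves the uniform law. Equality of the laws on all cylinder sets then follows, and these determine the measure since they generate the Borel $\sigma$-algebra of the metric space.
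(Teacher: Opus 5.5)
Your proposal is correct and follows essentially the paper's argument: the paper also shows that the law of $\bG^{\T}\bM\bG$ is invariant under translation by any symmetric $\bS\in\mathfrak{a}^{n\times n}$, writing $\bG^{\T}\bM\bG+\bS=\bG^{\T}(\bM+\bS')\bG$ with $\bS'=(\bG^{\T})^{-1}\bS\bG^{-1}$ (your $T^{-1}(\bA)$), and then concludes by uniqueness of Haar measure. Your additional checks (compactness of $\mathfrak{a}$, identifying $\operatorname{SymHaar}(\mathfrak{a}^{n\times n})$ with the Haar measure on symmetric matrices, continuity of $T$) make explicit details that the paper leaves implicit.
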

\begin{proof}
    Suppose that $\bM \sim \operatorname{SymHaar}(\mathfrak{a}^{n\times n})$ and consider some arbitrary matrix $\bS \in \mathfrak{a}^{n\times n}$ that is symmetric $\bS = \bS^{\T}$. 
    Then,
    $
        \bG^{\T} \bM \bG + \bS = \bG^{\T} \bigl(\bM + \bS'\bigr) \bG$ with $\bS' \de (\bG^{\T})^{-1} \bS \bG^{{-1}}.    
    $
    Now, using that the Haar measure is preserved by additive translations on $\bM + \bS'$ yields that  $\bG^{\T} \bM \bG + \bS$ has the same distribution as $\bG^{\T} \bM \bG$. 
    Thus, the law of $\bG^{\T} \bM \bG$ is also preserved by translation.
    Recall that invariance by translation characterizes the Haar measure to conclude. 
\end{proof}

\begin{proposition}\label{prop: Reduction}
    Suppose that $\bM \sim \operatorname{SymHaar}(\sO^{n\times n})$. 
    Fix some $k\leq n$. 
    Then, we have an equality in distribution of random $\sO$-modules: 
    \begin{align}
        \bbP\Bigl(\coker\bigl(\bM \bigr)  \in * \mid \rank_{q}\bigl(\bM \bigr) = n-k\Bigr) = \bbP\Bigl(\coker\bigl(\bK\bigr) \in * \Bigr),
    \end{align}
    where $\bK  \sim \operatorname{SymHaar}(\mathfrak{m}^{k\times k})$ is a symmetric $k\times k$ matrix with entries Haar distributed on $\mathfrak{m}$. 
\end{proposition}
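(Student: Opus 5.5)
Condition on the rank event $E_k \de \{\rank_q(\bM) = n-k\}$ and show that, on this event, $\bM$ is congruent to a block matrix $\operatorname{diag}(\bQ,\bK')$ whose $\bK'$-block is $\operatorname{SymHaar}(\mathfrak{m}^{k\times k})$. Since $\coker(\bG^{\T}\bM\bG)\cong\coker(\bM)$ for invertible $\bG$, and $\coker(\operatorname{diag}(\bQ,\bK'))\cong\coker(\bK')$ when $\bQ$ is invertible, this gives the claim.

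The key observation is that both $E_k$ and the choice of $\bG$ in \Cref{lem: BlockDiagonalDecomposition} depend only on $\bar\bM \de \bM \bmod \mathfrak{m}$. Concretely, for each of the finitely many symmetric $\bar\bM_0\in\bbF_q^{n\times n}$ of rank $n-k$, fix a lift $\bM_0\in\sO^{n\times n}$ and a $\bG_0\in\operatorname{GL}_n(\sO)$ with $\bG_0^{\T}\bM_0\bG_0 \equiv \operatorname{diag}(\bQ_0,0)\bmod\mathfrak{m}$, as in \eqref{eq:GoodCamel}. It suffices to prove the distributional identity conditionally on $\{\bar\bM=\bar\bM_0\}$ for each such $\bar\bM_0$, since $E_k$ is the finite disjoint union of these events.

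Conditionally on $\bar\bM=\bar\bM_0$, the law of $\bM$ is that of $\bM_0+\bE$ with $\bE\sim\operatorname{SymHaar}(\mathfrak{m}^{n\times n})$. This follows because the Haar measure on $\sO$ conditioned on a coset $r+\mathfrak{m}$ is the translate of the Haar measure on $\mathfrak{m}$, by uniqueness of translation-invariant measures. By \Cref{lem: InvarianceSymHaar}, $\bG_0^{\T}\bM\bG_0$ then has the law of
\[
\operatorname{diag}(\bQ_0,0)+\bD,\qquad \bD\sim\operatorname{SymHaar}(\mathfrak{m}^{n\times n}),
\]
after absorbing the fixed $\mathfrak{m}$-valued correction $\bG_0^{\T}\bM_0\bG_0-\operatorname{diag}(\bQ_0,0)$ into $\bD$ by translation invariance. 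Write $\bD$ in blocks $\bD_{1,1},\bD_{1,2},\bD_{2,2}$, which are independent, with $\bD_{2,2}\sim\operatorname{SymHaar}(\mathfrak{m}^{k\times k})$. Apply the Schur complement step \eqref{eq:VagueDad} to get
\[
\coker(\bM)\cong\coker\bigl(\bD_{2,2}-\bD_{1,2}^{\T}\bQ_1^{-1}\bD_{1,2}\bigr),\qquad \bQ_1=\bQ_0+\bD_{1,1}.
\]

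The main obstacle is showing that $\bK\de\bD_{2,2}-\bD_{1,2}^{\T}\bQ_1^{-1}\bD_{1,2}$ is exactly $\operatorname{SymHaar}(\mathfrak{m}^{k\times k})$, because the $\bG_1$ used in \eqref{eq:VagueDad} is random. I would handle this by conditioning on $(\bD_{1,1},\bD_{1,2})$. Given these, the matrix $\bS\de\bD_{1,2}^{\T}\bQ_1^{-1}\bD_{1,2}$ is fixed, symmetric, and has entries in $\mathfrak{m}$ (indeed in $\mathfrak{m}^2$). Since $\bD_{2,2}$ is independent of $(\bD_{1,1},\bD_{1,2})$ and its law is invariant under translation by symmetric $\mathfrak{m}$-valued matrices, $\bD_{2,2}-\bS$ is $\operatorname{SymHaar}(\mathfrak{m}^{k\times k})$ conditionally. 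Hence it is so unconditionally as well, and in particular its law does not depend on $\bar\bM_0$.

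Averaging over $\bar\bM_0$ then finishes the proof. One should also check measurability of the isomorphism class of the cokernel. This is routine, because $\coker(\bK)$ is determined by the reductions of $\bK$ modulo the powers $\mathfrak{m}^j$.
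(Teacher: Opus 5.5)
Your argument is correct and is essentially the paper's proof. It uses the same ingredients: block-diagonalize via \Cref{lem: BlockDiagonalDecomposition}, transport a $\operatorname{SymHaar}(\mathfrak{m}^{n\times n})$ fluctuation by a congruence using \Cref{lem: InvarianceSymHaar}, clear the off-diagonal block with the Schur complement step \eqref{eq:VagueDad}, and finish by translation invariance of the independent block $\bD_{2,2}$. The only difference is bookkeeping. The paper adds an independent perturbation $\bDelta$ to $\bM$ and conjugates it by the random $\bG$ obtained from $\bM$, which tacitly requires conditioning on $\bM$. You instead condition on the finitely many residues $\bar{\bM}_0$, so that $\bG_0$ is deterministic, which makes that step more transparent. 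Just be sure to take $\bM_0$ and $\bQ_0$ to be symmetric lifts.
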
 
\begin{proof}
    Let $\bM$ be a $\operatorname{SymHaar}(\sO^{n\times n})$ distributed matrix conditioned to have rank $n-k$. 
    Then, \Cref{lem: BlockDiagonalDecomposition} ensures there exist random invertible $\bG \in \operatorname{GL}_n(\sO)$ and invertible symmetric $\bQ \in \operatorname{GL}_{n-k}(\sO)$ as well as a random symmetric matrix $\bK \in \mathfrak{m}^{k\times k}$ such that $\bG^{\T}\bM\bG =   \operatorname{diag}(\bQ,\bK) $.

    We claim that it can here be assumed without loss of generality that $\bK\sim \operatorname{SymHaar}(\mathfrak{m}^{k\times k})$.   
    To see this, note that $\bM$ has the same distribution as $\bM + \bDelta$ with $\bDelta \sim \operatorname{SymHaar}(\mathfrak{m}^{n\times n})$ an independent random matrix. 
    \Cref{lem: InvarianceSymHaar} further implies that $\bD\de  \bG^{\T}\bDelta \bG$ is again $\operatorname{SymHaar}(\mathfrak{m}^{n\times n})$ distributed and independent from $\bM, \bK, \bG$ and $\bQ$.
    The matrix $\bG^{\T} (\bM + \Delta) \bG = \bG^{\T} \bM \bG + \bD$ may not be in block diagonal form anymore, but off-diagonal blocks can be cleared exactly as in \eqref{eq:ZombieGym}--\eqref{eq:VagueDad} in the proof of \Cref{lem: BlockDiagonalDecomposition}. 
    That is, writing $\bD$ in block diagonal form as in \eqref{eq:ZombieGym}, it holds with $\bQ_{\Delta} \de \bQ + \bD_{1,1}$ that  
    \begin{align}
        \bG_\Delta^{\T}\bG^{\T}(\bM + \Delta) \bG \bG_{\Delta} = \begin{pmatrix}
            \bQ_{\Delta} & 0 \\ 
            0 &\bK + \bD_{2,2} - \bD_{1,2}^{\T}\bQ_\Delta^{-1} \bD_{1,2} 
        \end{pmatrix} 
        \quad 
        \textnormal{ with }
        \quad
        \bG_{\Delta} \de 
        \bI -
        \begin{pmatrix}
            0 & \bQ^{-1}_\Delta \bD_{1,2}\\ 
            0 & 0 
        \end{pmatrix}
    \end{align}
    Here, note that $\bD_{2,2}$ is independent from $\bD_{1,2}^{\T}\bQ_{\Delta} \bD_{1,2}$ and $\bK$ due to the independence of $\bD$ from $\bQ$ and $\bK$, as well as the independence of the blocks in $\bD$ that follow from the matrix being $\operatorname{SymHaar}(\mathfrak{m}^{n\times n})$ distributed. 
    Consequently, the translation invariance of $\operatorname{SymHaar}(\mathfrak{m}^{k\times k})$ applied to $\bD_{2,2}$ ensures that the lower-right block is again Haar distributed: 
    \begin{align}
        \bK + \bD_{2,2} - \bD_{1,2}^{\T}\bQ_\Delta^{-1} \bD_{1,2} \sim \operatorname{SymHaar}(\mathfrak{m}^{k\times k}).\label{eq:SafeYawn}
    \end{align}
    Thus, we can indeed assume that $\bG^{\T} \bM \bG=  \operatorname{diag}(\bQ,\bK) $ with  $\bK \sim \operatorname{SymHaar}(\mathfrak{m}^{k\times k})$.
    (Indeed, replace $\bM$ by $\bM + \Delta$, replace $\bK$ by $\bK + \bD_{2,2} - \bD_{1,2}^{\T}\bQ_\Delta^{-1} \bD_{1,2}$, and replace $\bG$ and $\bQ$ by $\bG\bG_{\Delta}$ and $\bQ_{\Delta}$, respectively.) 

    It follows directly from the definition of the cokernel \eqref{eq: Def_Coker} that $\coker(\bU \bM \bV) \cong \coker(\bM)$ for every $\bU,\bV \in \operatorname{GL}_{n}(\sO)$. 
    Moreover, also directly from the definition, the cokernel of a block diagonal matrix is the direct sum of the cokernels of its diagonal blocks, and an invertible matrix always has trivial cokernel. 
    Hence, 
    \begin{align}
        \coker(\bM ) \cong \coker(\bG^{\T} \bM \bG ) \cong \coker(\bQ) \oplus \coker(\bK) \cong \coker(\bK). 
    \end{align}
    This concludes the proof.  
\end{proof}

\begin{corollary}\label{cor: ReductionBounded}
    Suppose that $\bM_{p,\beta} \sim \operatorname{SymHaar}(R_{p,\beta}^{n\times n})$. 
    Let it be understood that $\rank_q(\cdot)$ refers to the rank over $\bbF_q \cong \bbZ[x]/(p\bbZ[x] + \beta(x)\bbZ[x])$. 
    Then, for every $k\leq n$,  as random $R_{p,\beta}$ modules, 
    \begin{align}
        \bbP\Bigl(\coker\bigl(\bM_{p,\beta},\bigr) \in  *  \mid \rank_{q}\bigl(\bM_{p,\beta}\bigr) = n-k\Bigr) = \bbP\Bigl(\coker\bigl(\bK  \bigr) \in  * \Bigr),
    \end{align}
    where $\bK$ is a symmetric $k\times k$ matrix with entries Haar distributed on $pR_{p,\beta} + \beta(x) R_{p,\beta}$. 
    That is, 
    \begin{align}
        \bK  \sim  p\operatorname{SymHaar}(R_{p,\beta}^{k\times k})  + \beta(x) \operatorname{SymHaar}(R_{p,\beta}^{k\times k}).   
    \end{align}
\end{corollary}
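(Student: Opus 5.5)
The plan is to apply \Cref{prop: Reduction} with $\sO \de R_{p,\beta}$. To do so we must check that $R_{p,\beta}$ is a compact Noetherian local ring with maximal ideal $\mathfrak{m} = pR_{p,\beta} + \beta(x)R_{p,\beta}$ and residue field $\bbF_q \cong \bbZ[x]/(p\bbZ[x]+\beta(x)\bbZ[x])$. We must also check that the Haar measure on the ideal $\mathfrak{m}$, in the sense used before \Cref{lem: InvarianceSymHaar}, is the law of $pH_1+\beta H_2$ with independent Haar $H_1,H_2$. Entrywise, this gives exactly $\bK \sim p\operatorname{SymHaar}(R_{p,\beta}^{k\times k}) + \beta(x)\operatorname{SymHaar}(R_{p,\beta}^{k\times k})$ with independent summands.

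\textbf{Algebraic facts.} First, by \eqref{eq: ProfiniteZpbeta}, $R_{p,\beta}$ is the completion of $\bbZ[x]$ at the maximal ideal $\mathfrak{m}_0 = (p,\beta)$. This holds because the inverse systems $\bbZ[x]/\mathfrak{m}_0^j$ and $\bbZ_p[x][\![y]\!]/(y-\beta)$ truncated modulo $(p,\beta)^j$ agree. The completion of the Noetherian ring $\bbZ[x]$ at a maximal ideal is a Noetherian local ring whose maximal ideal is $\widehat{\mathfrak{m}_0} = \mathfrak{m}_0 R_{p,\beta} = (p,\beta)R_{p,\beta}$, and its residue field is $\bbZ[x]/\mathfrak{m}_0 \cong \bbF_q$ with $q = p^{\deg\beta}$; see \cite[Chapter 10]{atiyah2018introduction}. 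The residue field is finite, so by the footnote preceding \Cref{lem: BlockDiagonalDecomposition} the ring $R_{p,\beta}$ is compact for the $\mathfrak{m}$-adic metric. This topology coincides with the product/profinite topology in which $\operatorname{SymHaar}(R_{p,\beta}^{n\times n})$ was defined, since both are inverse-limit topologies over the cofinal systems $R_{p,\beta}/\mathfrak{m}^j$. Hence the Haar measures agree, and the matrix $\bM_{p,\beta}$ from \eqref{eq:BoldAnt} is indeed $\operatorname{SymHaar}(\sO^{n\times n})$-distributed in the sense of \Cref{prop: Reduction}. In the same way, $\rank_q$ in the corollary is the rank modulo $\mathfrak{m}$.

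\textbf{Applying the reduction.} With these identifications, \Cref{prop: Reduction} gives the conditional law of $\coker(\bM_{p,\beta})$ as that of $\coker(\bK)$ with $\bK\sim\operatorname{SymHaar}(\mathfrak{m}^{k\times k})$. For the explicit description, the generators $a_1=p$ and $a_2=\beta$ of $\mathfrak{m}$ give, by the remark before \Cref{lem: InvarianceSymHaar}, that $pH_1+\beta H_2$ is Haar on $\mathfrak{m}$. As a check, its law is invariant under translation by $pr+\beta s$, since $H_1+r$ and $H_2+s$ remain independent Haar. Taking independent such entries for $i\le j$ yields the claimed matrix law.

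The only step needing care is identifying the ring and topology. One must confirm that the maximal ideal of $R_{p,\beta}$ is generated by $p$ and $\beta$, and that the Haar measure described in \eqref{eq: HaarElementZpbeta} is the one relevant for \Cref{prop: Reduction}. I expect both to follow from standard completion theory, with no genuine obstacle. The rest is a direct invocation of \Cref{prop: Reduction}.
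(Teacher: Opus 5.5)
Your proposal is correct and follows the paper's proof: both apply \Cref{prop: Reduction} with $\sO = R_{p,\beta}$, using that this completion of $\bbZ[x]$ at $(p,\beta)$ is a Noetherian local ring with maximal ideal $pR_{p,\beta}+\beta(x)R_{p,\beta}$ and finite residue field $\bbF_q$, and hence compact. Your additional checks make explicit two points the paper leaves implicit: that the $\mathfrak{m}$-adic and profinite topologies give the same Haar measure, and that the Haar law on the ideal is that of $pH_1+\beta H_2$ with $H_1,H_2$ independent Haar.
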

\begin{proof}
    This is immediate from \Cref{prop: Reduction} since $R_{p,\beta}$ is a compact Noetherian local ring with maximal ideal $\widehat{\mathfrak{m}} = pR_{p,\beta} + \beta(x) R_{p,\beta}$. 
    Indeed, the $\mathfrak{m}$-adic completion of a ring at a maximal ideal $\mathfrak{m}$ is always a local ring \cite[Proposition 10.16]{atiyah2018introduction}, it is Noetherian when the original ring is so \cite[Theorem 10.26]{atiyah2018introduction}, and it is compact if and only if the residue field modulo $\mathfrak{m}$ is finite (recall footnote \ref{foot: Local}). 
\end{proof}

\subsection{Conditions based on the walk matrix}\label{sec: ConditionWalk}

The following definition provides a profinite analogue for the rephrasing of the condition in \Cref{prop: ModuleRephrasingWalkDet}: 

\begin{definition}\label{def: ConditionW}
    Fix a prime $p$. 
    Then, a $\widehat{R}$-module $\sM$ is said to satisfy \emph{condition $\cW_p$} if one of the following two events is satisfied:
    \begin{enumerate}[leftmargin = 2.8em]
        \item[(W1)] It holds that $\sM\otimes_{\widehat{R}} R_{p,\beta} \cong 0$ for every monic $\beta(x) \in \bbZ[x]$ with irreducible reduction in $\bbF_p[x]$.
        \item[(W2)] Or, there exists $a\in \bbZ$ such that $\sM \otimes_{\widehat{R}} R_{p,x-a} \cong \bbF_{p}[x]/(x-a)\bbF_{p}[x]$ 
        and $\sM\otimes_{\widehat{R}} R_{p,\beta} \cong 0$ for every $\beta \not\equiv x-a \bmod p$.     
    \end{enumerate}
\end{definition}

To model \Cref{conj: WalkMatrixSquareFree}, we study the frequency of condition $\cW_p$ for the module $\coker(\bM-x\bI,\zeta)$ with $\zeta \sim \operatorname{Haar}(\widehat{R}^n)$ a random vector that is independent of the random matrix $\bM$. 
Exactly as in \eqref{eq:WeepyJar}--\eqref{eq:BoldAnt}, the random modules $\coker(\bM-x\bI, \zeta) \otimes_{\widehat{R}} R_{p,\beta}$ for varying $(p,\beta)$ are independent with the same distribution as $\coker(\bM_{p,\beta}, \zeta_{p,\beta})$ for $\bM_{p,\beta}\sim \operatorname{SymHaar}(R_{p,\beta}^{n\times n})$ and $\zeta_{p,\beta} \sim \operatorname{Haar}(R_{p,\beta}^n)$. 
It hence suffices to study the distribution of these pieces individually. 
\Cref{cor: ReductionBounded} reduces the latter problem to a finite direct computation, whose details we next provide in \Cref{lem: TrivialCoker_pbeta,lem: 1D-cokernel}:

\begin{lemma}\label{lem: TrivialCoker_pbeta}
    Let $q \de p^{\operatorname{deg}(\beta)}$. 
    Then, for every $n\geq 1$, 
    \begin{align}
       \Bigl\lvert\, \bbP\bigl( \coker(\bM_{p,\beta}, \zeta_{p,\beta}) \cong 0\bigr) - \Bigl( 1 - \frac{1}{q^2}\Bigr)\prod_{i\geq 1}\Bigl(1 - \frac{1}{q^{2i+1}} \Bigr) \,  \Bigr\rvert \leq  \frac{6}{q^n}. \label{eq:TediousPop}
    \end{align}
\end{lemma}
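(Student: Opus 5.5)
The event $\coker(\bM_{p,\beta},\zeta_{p,\beta})\cong 0$ depends only on reductions modulo the maximal ideal $\widehat{\mathfrak m}=pR_{p,\beta}+\beta R_{p,\beta}$, so the plan is to reduce to a count over $\bbF_q$ and then use the symmetric rank statistics \eqref{eq:BlueClaw} with a quantitative error term. By Nakayama's lemma, the finitely generated module $\coker(\bM_{p,\beta},\zeta_{p,\beta})$ vanishes if and only if it vanishes modulo $\widehat{\mathfrak m}$. Equivalently, the $n\times(n+1)$ matrix $[\widebar{\bM},\widebar{\zeta}]$ over $\bbF_q$ has rank $n$. Here $\widebar{\bM}$ is a uniform symmetric matrix over $\bbF_q$ and $\widebar{\zeta}$ is an independent uniform vector in $\bbF_q^n$.

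Next, split according to $\rank_q(\widebar{\bM})$. If the rank is $n$, the condition holds. If the rank is $n-1$, the image of $\widebar{\bM}$ is a hyperplane, and the condition holds exactly when $\widebar{\zeta}$ lies outside it. By independence this has conditional probability $1-1/q$. If the rank is at most $n-2$, adding one column cannot restore full rank. Hence
\begin{align}
\bbP\bigl(\coker(\bM_{p,\beta},\zeta_{p,\beta})\cong 0\bigr)=\bbP\bigl(\rank_q(\widebar{\bM})=n\bigr)+\Bigl(1-\frac1q\Bigr)\bbP\bigl(\rank_q(\widebar{\bM})=n-1\bigr).
\end{align}
Inserting the limits from \eqref{eq:BlueClaw} with $k=0,1$ and writing $P\de\prod_{i\geq0}(1-q^{-2i-1})$ gives $P\bigl(1+\tfrac{1}{q}\bigr)$. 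This equals $(1-q^{-2})\prod_{i\geq1}(1-q^{-2i-1})$, which is the claimed constant.

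The main obstacle is the nonasymptotic bound $6/q^n$, since \eqref{eq:BlueClaw} is only a limit. For this I would use the exact finite-$n$ counts of MacWilliams \cite{macwilliams1969orthogonal}. The proportion of nonsingular symmetric $n\times n$ matrices over $\bbF_q$ is the finite product $\prod_{i=1}^{\lceil n/2\rceil}(1-q^{1-2i})$. Comparing it with the infinite product $P$ costs at most $\sum_{i>\lceil n/2\rceil}q^{1-2i}\leq 2q^{-n}$.

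For rank $n-1$, I would derive a recursion by conditioning on the first row and column, or equivalently use the exact MacWilliams count for corank one. This expresses the probability as $P/(q-1)$ times a finite product whose truncation error is again geometrically small, of order $q^{-n}$ with a constant at most $3$. Adding the two errors, each weighted by coefficients at most $1$, should give total error at most $6/q^n$. Verifying that this constant fits, including the parity of $n$ and the case of even $q$, is the routine but fiddly part.
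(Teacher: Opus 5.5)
Your proposal is correct and follows the paper's proof. Nakayama reduces the event to $[\bM_{p,\beta},\zeta_{p,\beta}]$ having full rank over $\bbF_q$, and conditioning on $\rank_q(\bM_{p,\beta})\in\{n,n-1\}$, with conditional probability $1-1/q$ in the second case, produces the stated constant.

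The only difference is the error term. The paper simply cites Fulman--Goldstein's bound $3/q^n$ for each rank probability. Your route through MacWilliams' exact counts, valid for all $q$, also works and gives an error well below $6/q^n$. For corank one the exact probability is $q^{-1}\prod_{i=1}^{\lfloor (n-1)/2\rfloor}(1-q^{-2i-1})$, multiplied by $(1-q^{-n})$ when $n$ is even, so it deviates from the limit by at most $q^{-n-1}$.
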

\begin{proof}
    Nakayama's lemma \cite[Proposition 2.6]{atiyah2018introduction} applied over $R_{p,\beta}$ yields that $\coker(\bM_{p,\beta}, \zeta_{p,\beta}) \cong 0$ if and only if $[\bM_{p,\beta}, \zeta_{p,\beta}]$ has full rank over $\bbF_q \de \bbF_{p}[x]/\beta(x) \bbF_p[x]$. 
    The rank of that rectangular matrix is at most one higher than that of $\bM_{p,\beta}$. 
    Hence, by the law of total probability,  
    \begin{align}
        \bbP\bigl( \coker(\bM_{p,\beta}, \zeta_{p,\beta}) \cong 0\bigr){}&{} =  \bbP\bigl(\operatorname{rank}_q(\bM_{p,\beta}) = n\bigr)\label{eq:ZippySong}\\ 
        &+ \bbP\bigl(\operatorname{rank}_q(\bM_{p,\beta}) = n-1 \bigr) \bbP(\operatorname{rank}_q(\bM_{p,\beta},\zeta_{p,\beta}) = n\mid \operatorname{rank}_q(\bM_{p,\beta}) = n-1 ). \nonumber
    \end{align}
    Fulman and Goldstein \cite[Theorem 4.1]{fulman2015stein} gave bounds on the total variation distance between the probability distribution of the rank of a uniform random symmetric $n\times n$ matrix over $\bbF_q$ and the limiting law \eqref{eq:BlueClaw}. 
    In particular, their bounds imply that for every $k\leq n$, 
    \begin{align}
        \textstyle \bigl\lvert \bbP\bigl(\operatorname{rank}_q(\bM_{p,\beta}) = n-k\bigr) -\prod_{i=1}^k(q^i -1)^{-1} \prod_{i\geq 0}\bigl(1 - q^{-2i-1}\bigr) \bigr\rvert \leq 3/q^{n}. \label{eq:NoisyHut}  
    \end{align}
    Conditional on the event $\operatorname{rank}_q(\bM_{p,\beta}) = n-1 $ it occurs that $\operatorname{rank}_q(\bM_{p,\beta},\zeta_{p,\beta}) = n$ if and only if $\zeta_{p,\beta}$ reduces to a nonzero element in the one-dimensional $\bbF_q$ vector space $\coker(\bM_{p,\beta})\otimes_{R_{p,\beta}} \bbF_q$. 
    Thus, since $\zeta_{p,\beta}$ yields a uniform random element of that one-dimensional vector space,  
    \begin{align}
        \bbP(\operatorname{rank}_q(\bM_{p,\beta},\zeta_{p,\beta}) = n\mid \operatorname{rank}_q(\bM_{p,\beta}) = n-1 )  = 1- 1/q.\label{eq:TrustyOak} 
    \end{align}
    Substitute \eqref{eq:NoisyHut} and \eqref{eq:TrustyOak} in \eqref{eq:ZippySong} and simplify by using that $1 + (q-1)^{-1}(1-q^{-1})= 1 + q^{-1}$ to find \eqref{eq:TediousPop}. 
    Here, the product in \eqref{eq:TediousPop} starts at $i=1$ because the factor with $i=0$ from \eqref{eq:NoisyHut} was rewritten using that $(1+q^{-1})(1-q^{-1}) = 1-q^{-2}$.  
\end{proof}

\begin{lemma}\label{lem: 1D-cokernel}
    Let $q \de p^{\deg(\beta)}$.
    Then, every $n\geq 2$, 
    \begin{align}
          \biggl\lvert\, \bbP\biggl(\coker\bigl(\bM_{p,\beta},\zeta_{p,\beta}\bigr)    \cong \frac{\bbF_{p}[x]}{\beta(x)\bbF_{p}[x]}\biggr) - \frac{1}{q^2}\Bigl( 1 - \frac{1}{q^2}\Bigr)^2 \prod_{i\geq 1}\Bigl(1 - \frac{1}{q^{2i+1}} \Bigr)   \,  \biggr\rvert \leq \frac{6}{q^n}. \label{eq:SafeKee}
    \end{align}
\end{lemma}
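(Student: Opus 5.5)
The plan is to mirror the proof of \Cref{lem: TrivialCoker_pbeta}: condition on $k \de n-\rank_q(\bM_{p,\beta})$, compute the exact conditional probability for each $k$ using \Cref{cor: ReductionBounded}, and then substitute the rank asymptotics \eqref{eq:NoisyHut}. Write $\sO\de R_{p,\beta}$, $\mathfrak{m}\de p\sO+\beta\sO$ and $\bbF_q=\sO/\mathfrak{m}$, so that $\bbF_p[x]/\beta\bbF_p[x]\cong \sO/\mathfrak{m}$.

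\textbf{Which ranks contribute.} By Nakayama's lemma, $\dim_{\bbF_q}\coker(\bM_{p,\beta},\zeta_{p,\beta})\otimes\bbF_q = n-\rank_q[\bM_{p,\beta},\zeta_{p,\beta}]$. If the cokernel is $\cong \sO/\mathfrak{m}$, this dimension equals one. So only $k=1$ and $k=2$ contribute, and for $k=2$ we additionally need $\zeta$ to be nonzero modulo the image.

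\textbf{Conditional structure.} Run the argument of \Cref{prop: Reduction}, keeping track of $\zeta$. We obtain $\bG^{\T}\bM_{p,\beta}\bG=\operatorname{diag}(\bQ,\bK)$ with $\bK\sim\operatorname{SymHaar}(\mathfrak{m}^{k\times k})$. Moreover,
\[
\coker(\bM_{p,\beta},\zeta_{p,\beta})\cong\coker(\operatorname{diag}(\bQ,\bK),\bG^{\T}\zeta_{p,\beta})\cong\coker(\bK,\xi),
\]
where $\xi\in\sO^k$ is the last block of $\bG^{\T}\zeta_{p,\beta}$, taken after clearing the $\bQ$-block by column operations. The role of $\xi$ must be checked carefully. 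Since $\zeta_{p,\beta}$ is Haar and independent of $\bM_{p,\beta}$, and $\bG$ and $\bQ$ are functions of $\bM_{p,\beta}$ and the auxiliary randomness, the vector $\bG^{\T}\zeta_{p,\beta}$ is Haar and independent of $\bK$. Hence $\xi\sim\operatorname{Haar}(\sO^k)$ is independent of $\bK$.

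\textbf{The two cases.} In both cases we use that $\mathfrak{m}/\mathfrak{m}^2\cong\bbF_q^2$, since $\sO$ is regular local of dimension two with generators $p,\beta$. We also use that the reduction modulo $\mathfrak{m}^2$ of a Haar element of $\mathfrak{m}$ is uniform on $\mathfrak{m}/\mathfrak{m}^2$.
\begin{enumerate}
\item Case $k=1$. The cokernel is $\sO/(\kappa,\xi)$. It equals $\sO/\mathfrak{m}$ if and only if $\xi\in\mathfrak{m}$, which has probability $1/q$, and $\kappa,\xi$ span $\mathfrak{m}/\mathfrak{m}^2$. Given $\xi\in\mathfrak{m}$, the spanning event has probability $(1-q^{-2})(1-q^{-1})$, because two independent uniform vectors in $\bbF_q^2$ are a basis with that probability; Nakayama's lemma turns spanning modulo $\mathfrak{m}^2$ into $(\kappa,\xi)=\mathfrak{m}$.
\item Case $k=2$. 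We need $\xi\not\equiv 0\bmod\mathfrak{m}$, which has probability $1-q^{-2}$. Choose an invertible $\bU$ with $\bU\xi=e_1$. Then the cokernel is $\sO/I$, where $I$ is generated by the two entries of $u^{\T}\bK$ and $u$ is a primitive vector with $u^{\T}\xi=0$. Modulo $\mathfrak{m}^2$, the map $(k_{11},k_{12},k_{22})\mapsto(u_1k_{11}+u_2k_{12},\,u_1k_{12}+u_2k_{22})$ is surjective because $u$ is primitive. Hence $u^{\T}\bK\bmod\mathfrak{m}^2$ is uniform on $(\mathfrak{m}/\mathfrak{m}^2)^2$, and again $I=\mathfrak{m}$ with probability $(1-q^{-1})(1-q^{-2})$.
\end{enumerate}

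\textbf{Assembly and error.} Multiply by the limiting rank probabilities $(q-1)^{-1}\prod_{i\ge0}(1-q^{-2i-1})$ and $((q-1)(q^2-1))^{-1}\prod_{i\ge0}(1-q^{-2i-1})$. The $k=1$ term gives $q^{-2}(1-q^{-2})$ and the $k=2$ term gives $q^{-3}(1-q^{-2})$, each times $\prod_{i\ge0}(1-q^{-2i-1})$. Their sum is
\[
q^{-2}(1-q^{-2})(1+q^{-1})(1-q^{-1})\prod_{i\ge1}(1-q^{-2i-1}),
\]
which equals the main term in \eqref{eq:SafeKee}. Each conditional probability is at most one, and \eqref{eq:NoisyHut} is applied for $k=1,2$, which uses $n\ge2$. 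The total error is therefore at most $3/q^n+3/q^n=6/q^n$.

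I expect the main obstacle to be the rigorous justification that $\xi$ is Haar and independent of $\bK$ after the random change of basis, including the perturbation step in \Cref{prop: Reduction}. A secondary obstacle is the uniformity of $u^{\T}\bK$ modulo $\mathfrak{m}^2$ in the $k=2$ case, since $u$ depends on $\xi$; this is handled by conditioning on $\xi$.
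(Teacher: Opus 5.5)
Your proposal is correct and follows essentially the same route as the paper. It conditions on $\rank_q(\bM_{p,\beta})\in\{n-1,n-2\}$ and reduces via \Cref{cor: ReductionBounded} to $\coker(\kappa,\xi)$ or $\coker(\bK,Z)$. It then decides generation of $\mathfrak{m}$ by Nakayama on $\mathfrak{m}/\mathfrak{m}^2\cong\bbF_q^2$ and assembles with the Fulman--Goldstein bound, getting the same error $3/q^n+3/q^n$. The differences are cosmetic: in the rank-$(n-2)$ case you check directly that $u^{\T}\bK \bmod \mathfrak{m}^2$ is uniform, where the paper invokes \Cref{lem: InvarianceSymHaar} for $\bG\bK\bG^{\T}$, and you are more explicit than the paper about why $\bG^{\T}\zeta$ remains Haar and independent of $\bK$.
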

\begin{proof}
    Denote $\bbF_q \de \bbF_{p}[x]/\beta(x)\bbF_{p}[x]$. 
    Note that $\coker\bigl(\bM_{p,\beta},\zeta_{p,\beta}\bigr)  \cong \bbF_q$ as a $R_{p,\beta}$-module implies that $\coker\bigl(\bM_{p,\beta},\zeta_{p,\beta}\bigr)\otimes_{R_{p,\beta}} \bbF_q \cong \bbF_{q}$, and hence necessitates that the matrix $[\bM_{p,\beta}, \zeta_{p,\beta}]$ reduces to a matrix with rank $n-1$ over $\bbF_q$. 
    Hence, by the law of total probability, 
    \begin{align}
        \bbP\bigl(\coker\bigl(\bM_{p,\beta},{}&{}\zeta_{p,\beta}\bigr)    \cong  \bbF_q\bigr) \label{eq:BlueKey}\\ 
        & = \bbP\bigl(\coker\bigl(\bM_{p,\beta},\zeta_{p,\beta}\bigr)    \cong\bbF_q\ \Big\vert \   \rank_q(\bM_{p,\beta}) = n-1\bigr) \bbP\bigl( \rank_q(\bM_{p,\beta}) = n-1\bigr) \nonumber\\ 
        & \ \ +  \bbP\bigl(\coker\bigl(\bM_{p,\beta},\zeta_{p,\beta}\bigr)    \cong\bbF_q \ \Big\vert \   \rank_q(\bM_{p,\beta}) = n-2\bigr) \bbP\bigl( \rank_q(\bM_{p,\beta}) = n-2\bigr).\nonumber
    \end{align}
    
    We start with the probability conditional on rank $n-1$. 
    Then, \Cref{cor: ReductionBounded} yields that $\coker(\bM_{p,\beta})$ has the same distribution as $\coker(k_1)$ with $k_1\sim p\operatorname{Haar}(R_{p,\beta}) + \beta(x)\operatorname{Haar}(R_{p,\beta})$. 
    Consequently, $\coker(\bM_{p,\beta}, \zeta_{p,\beta})$ then has the same distribution as $\coker(k_1,z)$ with $z\sim \operatorname{Haar}(R_{p,\beta})$. 
    If $z$ has nonzero reduction in $\bbF_q$ then $\coker(z)\cong 0$ and hence also $\coker(k_1,z)\cong 0$. 
    Hence, since $z$ reduces to zero with probability $1/q$, it holds with $k_1,k_2 \sim  p\operatorname{Haar}(R_{p,\beta}) + \beta(x)\operatorname{Haar}(R_{p,\beta})$ independent that  
    \begin{equation}
         \bbP\bigl(\coker\bigl(\bM_{p,\beta(x)},\zeta_{p,\beta}\bigr)    \cong \bbF_q  \mid     \rank_q(\bM_{p,\beta}) = n-1\bigr) = q^{-1} \bbP\bigl(\coker(k_1,k_2)  \cong \bbF_q \bigr).\label{eq:FunBox}
    \end{equation}
    Nakayama's lemma \cite[Proposition 2.8]{atiyah2018introduction} implies that two elements of $\widehat{\mathfrak{m}} =  pR_{p,\beta} + \beta(x) R_{p,\beta}$ generate this ideal if and only if their reduction to $\widehat{\mathfrak{m}}/\widehat{\mathfrak{m}}^2$ generate the latter as a vector space over $\bbF_q$. 
    Here, $\widehat{\mathfrak{m}}/\widehat{\mathfrak{m}}^2 \cong \bbF_q^2$ as a vector space since $p$ and $\beta(x)$ yield a basis.
    Consequently, using that $R_{p,\beta}/\widehat{\mathfrak{m}} \cong \bbF_q$ and that the probability that two random vectors generate $\bbF_q^2$ is exactly $(1 - 1/q^{2})(1-1/q)$, 
    \begin{align}
         \bbP\bigl(\coker\bigl(\bM_{p,\beta(x)},\zeta_{p,\beta}\bigr)    \cong \bbF_q  \mid   \rank_q(\bM_{p,\beta}) = n-1\bigr) = q^{-1} \bigl(1 - q^{-2}\bigr) \bigl(1-q^{-1} \bigr).\label{eq:DizzyCrow} 
    \end{align}

    The probability conditional on rank $n-2$ proceeds with a similar strategy.
    On that event, \Cref{cor: ReductionBounded} yields that $\coker(\bM_{p,\beta})$ has the same distribution as $\coker(\bK)$ with $\bK$ a symmetric $2\times 2$ matrix with entries Haar distributed on $ pR_{p,\beta} + \beta(x)R_{p,\beta}$. 
    Then, $\coker(\bM_{p,\beta},\zeta_{p,\beta})$ has the same distribution as $\coker(\bK, Z)$ with $Z\sim \operatorname{Haar}(R_{p,\beta}^2)$. 
    If $Z$ reduces to zero in $\bbF_q^2$, then $\coker(\bK, Z)\otimes_{R_{p,\beta}}\bbF_q$ is a $2$-dimensional vector space so that $\coker(\bK,Z)$ could not be isomorphic to $\bbF_q$ as a $R_{p,\beta}$-module.
    Hence, since $Z$ reduces to zero with probability $1/q^{2}$,  
    \begin{align}
         \bbP\bigl(\coker\bigl(\bM_{p,\beta},\zeta_{p,\beta}{}&{}\bigr)    \cong\bbF_q  \mid    \rank_q(\bM_{p,\beta}) = n-2\bigr)\label{eq:PaleTea} \\ 
         &= \bigl( 1 - q^{-2}\bigr) \bbP\bigl( \coker(\bK , Z) \cong \bbF_q  \mid Z\not\equiv 0 \bmod p R_{p,\beta} + \beta(x) R_{p,\beta}  \bigr).\nonumber 
    \end{align}
   That $Z$ has nonzero reduction is equivalent to the existence of a matrix $\bG \in \operatorname{GL}_2(R_{p,\beta})$ such that $\bG Z = (1,0)^{\T}$.     
    The matrix $\bG$ only depends on $Z$, so the invariance from \Cref{lem: InvarianceSymHaar} implies that $\tilde{\bK}=\bQ \bK \bQ^{\T}$ has the same distribution as $\bK$. 
    Further,  
    \begin{equation}
        \coker(\bK, Z) \cong \coker(\bQ \bK \bQ^{\T}, \bQ Z)  
        = \coker\begin{pmatrix}
            \tilde{\bK}_{1,1} & \tilde{\bK}_{1,2} & 1 \\ 
            \tilde{\bK}_{1,2} & \tilde{\bK}_{2,2} & 0
        \end{pmatrix} 
        \cong  \coker(\tilde{\bK}_{1,2},\, \tilde{\bK}_{2,2}).  
    \end{equation}
    Note that $\tilde{\bK}_{1,2}\, , \,  \tilde{\bK}_{2,2} \sim p\operatorname{Haar}(R_{p,\beta}) + \beta(x)\operatorname{Haar}(R_{p,\beta})$. 
    Consequently, by the arguments in \eqref{eq:FunBox}--\eqref{eq:DizzyCrow}, it holds that $\bbP(\coker(\tilde{\bK}_{1,2}, \tilde{\bK}_{2,2}) \cong \bbF_q) = (1-1/q^2)(1-1/q)$. 
    Hence, using \eqref{eq:PaleTea}, 
    \begin{align}
        \bbP\bigl(\coker(\bM_{p,\beta},\zeta_{p,\beta} )    \cong\bbF_q  \mid   \rank_q(\bM_{p,\beta}) = n-2\bigr) = \bigl( 1 - q^{-2}\bigr)^2 \bigl( 1 - q^{-1}\bigr).\label{eq:ZenCity}  
    \end{align}

    Substitution of \eqref{eq:DizzyCrow} and \eqref{eq:ZenCity} in \eqref{eq:BlueKey} now yields that 
    \begin{align}
        \bbP\bigl(\coker\bigl(\bM_{p,\beta}, \zeta_{p,\beta}\bigr)    \cong  \bbF_q\bigr) ={}&{}  q^{-1} \bigl(1 - q^{-2}\bigr) \bigl(1-q^{-1} \bigr) \bbP\bigl( \rank_q(\bM_{p,\beta}) = n-1\bigr) \label{eq:OccultDew}  \\ 
        & \quad +  \bigl( 1 - q^{-2} \bigr)^2 \bigl( 1 - q^{-1}\bigr)\bbP\bigl( \rank_q(\bM_{p,\beta}) = n-2\bigr). \nonumber 
    \end{align}
    By \cite[Theorem 4.1]{fulman2015stein}, we can replace $\bbP( \rank_q(\bM_{p,\beta}) = n-k)$ by its limiting value up to an error $\leq 3/q^n$; recall \eqref{eq:NoisyHut}. 
    The desired result \eqref{eq:SafeKee} now follows after simplifying the prefactors, since  
    \begin{align}
         \frac{(1 - q^{-2}) (1-q^{-1})}{q (q-1)}  + \frac{(1-q^{-2})^2 (1-q^{-1})}{(q-1)(q^2-1)} = \frac{(1 - q^{-2})^2}{q^{2}(1-q^{-1})}.
    \end{align}
    Here, the product in \eqref{eq:SafeKee} starts at $i=1$ because $(1-q^{-1})^{-1}$ cancels the factor for $i=0$ in \eqref{eq:NoisyHut}.  
\end{proof}
It now remains to combine \Cref{lem: TrivialCoker_pbeta,lem: 1D-cokernel} which involves taking the product of the probabilities.
To simplify the latter we will use the following \Cref{lem: Zeta}:  
\begin{lemma}\label{lem: Zeta} 
    Fix a prime $p$. Then, for every real $s>1$, 
    \begin{align}
        \prod_{\beta(x) }\Bigl(1 - \frac{1}{p^{\operatorname{deg(\beta)}s}  } \Bigr)  =    1 - \frac{1}{p^{s-1}}.  
    \end{align}
    Here, the product runs over all monic irreducible $\beta(x) \in \bbF_p[x]$. 
\end{lemma}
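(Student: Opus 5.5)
The plan is to identify the product with the Euler product of the zeta function of $\bbF_p[x]$. Write $u \de p^{-s}$; since $s>1$ we have $0<u<1/p$. Let $N_d$ denote the number of monic irreducible polynomials of degree $d$ in $\bbF_p[x]$. The left-hand side is then $\prod_{d\geq 1}(1-u^d)^{N_d}$, and the claim is that this equals $1-pu = 1 - p^{1-s}$.

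\textbf{Step 1: expand the reciprocal.} For each $\beta$, expand $(1-p^{-\deg(\beta)s})^{-1} = \sum_{j\geq 0} u^{j\deg(\beta)}$ as a geometric series. Then multiply over all $\beta$. By unique factorization in $\bbF_p[x]$, every monic polynomial $f$ arises exactly once as a product $\prod_\beta \beta^{j_\beta}$ with finitely many nonzero exponents. This gives
\begin{align}
\prod_{\beta}\Bigl(1 - \frac{1}{p^{\deg(\beta)s}}\Bigr)^{-1} = \sum_{f \text{ monic}} u^{\deg(f)} = \sum_{d\geq 0} p^d u^d = \frac{1}{1-pu},
\end{align}
using that there are exactly $p^d$ monic polynomials of degree $d$ over $\bbF_p$. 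Taking reciprocals yields the stated identity.

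\textbf{Step 2: justify the infinite manipulations.} This is the only real obstacle. All terms are nonnegative, so rearranging the expanded product is legitimate by Tonelli, provided the series converges. Convergence holds because $\sum_d p^d u^d = \sum_d p^{d(1-s)} < \infty$ when $s>1$.

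To make the Euler product argument rigorous, I would first restrict to the finitely many $\beta$ with $\deg(\beta)\leq D$. The resulting finite product of geometric series equals the sum of $u^{\deg(f)}$ over monic $f$ whose irreducible factors all have degree $\leq D$. This partial sum is bounded above by $1/(1-pu)$ and increases to it as $D\to\infty$ by monotone convergence.

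Finally, the product $\prod_\beta(1-u^{\deg\beta})$ converges to a nonzero limit, since $\sum_\beta u^{\deg \beta} \leq \sum_d p^d u^d < \infty$. So passing to reciprocals in the limit is valid, which completes the plan.
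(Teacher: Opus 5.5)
Your proposal is correct and follows essentially the same route as the paper, which proves the identity via the Euler product for the zeta function of $\bbF_p[x]$, i.e.\ by evaluating $\sum_{f\text{ monic}} p^{-\deg(f)s}$ in two ways (citing Rosen). Your truncation by degree with monotone convergence, together with the check that the product has a nonzero limit, supplies the convergence details that the paper leaves to the reference.
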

\begin{proof}
    This follows from the Euler product formula for the zeta function of the ring $\bbF_p[x]$. 
    That is, by evaluating $\sum_{\textnormal{monic }f\in \bbF_{p}[x]} p^{-\deg(f)s}$ in two ways; see \eg \cite[Chapter 2, Equations (1)\&(2)]{rosen2013number}.  
\end{proof}

Combining \Cref{lem: TrivialCoker_pbeta,lem: 1D-cokernel,lem: Zeta} now yields our main result: the exact limiting probability that condition $\cW_p$ is satisfied in the profinite model; recall \Cref{def: ConditionW}. 
In particular, the following \Cref{thm: HaarMainWalk} justifies \Cref{conj: WalkMatrixSquareFree}.
\begin{theorem}\label{thm: HaarMainWalk}
    Let $\bM\sim \operatorname{SymHaar}(\widehat{R}^{n\times n})$ and $\zeta \sim \operatorname{Haar}(\widehat{R}^n)$. 
    Then, for every fixed prime $p$, 
    \begin{align}
       \lim_{n\to \infty}\bbP\Bigl(\coker(\bM-x\bI, \zeta) \textnormal{ satisfies condition }\cW_p \Bigr) = \Bigl(1 - \frac{1}{p^2} - \frac{1}{p^3} + \frac{1}{p^4}\Bigr) \prod_{i=1}^\infty \Bigl(1 - \frac{1}{p^{2i}}\Bigr).\label{eq:SlowWisp} 
    \end{align}
    Moreover, for any set of primes $\sP$, 
    \begin{align}
      \lim_{n\to \infty}\bbP\Bigl(\forall   p\in \sP : \coker(\bM-x\bI, \zeta) \textnormal{ satisfies  }\cW_p  \Bigr)  = \prod_{p\in \sP}  \Bigl(1 - \frac{1}{p^2} - \frac{1}{p^3} + \frac{1}{p^4}\Bigr) \prod_{i=1}^\infty \Bigl(1 - \frac{1}{p^{2i}}\Bigr).\label{eq:CheekyRock}
    \end{align}
\end{theorem}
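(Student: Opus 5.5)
Fix a prime $p$. By the decomposition \eqref{eq:BoldAnt} (with $\zeta$ included) the pieces $\sM_\beta \de \coker(\bM-x\bI,\zeta)\otimes_{\widehat R}R_{p,\beta}$ over the monic irreducible $\beta\in\bbF_p[x]$ are independent. Each $\sM_\beta$ is distributed as $\coker(\bM_{p,\beta},\zeta_{p,\beta})$. Write $q_\beta = p^{\deg\beta}$, $z_\beta \de \bbP(\sM_\beta\cong 0)$, and $o_\beta \de \bbP(\sM_\beta\cong \bbF_{q_\beta})$.

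Condition $\cW_p$ is the disjoint union of (W1), that all pieces vanish, and of (W2) over the $p$ residue classes $a\in\bbF_p$. A piece isomorphic to $\bbF_p[x]/(x-a)$ can only occur at a linear $\beta = x-a$. Independence then gives, for fixed $n$,
\[
\bbP(\cW_p) = \Bigl(\prod_\beta z_\beta\Bigr)\Bigl(1+\sum_{\deg\beta = 1} \frac{o_\beta}{z_\beta}\Bigr).
\]

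Next I pass to the limit $n\to\infty$ using \Cref{lem: TrivialCoker_pbeta,lem: 1D-cokernel}. These give $z_\beta\to (1-q^{-2})\prod_{i\ge1}(1-q^{-2i-1})$ and $o_\beta/z_\beta\to q^{-2}(1-q^{-2})$ with $q=q_\beta$. The second sum has only $p$ terms, so it converges to $p\cdot p^{-2}(1-p^{-2}) = p^{-1}-p^{-3}$.

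The main obstacle is the infinite product over $\beta$, since the error bound $6/q^n$ is uniform only for a fixed $\beta$. I would handle this by a dominated convergence argument for products. Each factor satisfies $0\le z_\beta\le1$, and $1-z_\beta \le \bbP(\rank_q \text{ of } [\bM_{p,\beta},\zeta_{p,\beta}] < n)$. By a union bound over the kernel of the transpose, this is at most $\sum$ over nonzero $v\in\bbF_q^n$ of $\bbP(v^{\T}[\bM,\zeta]=0)$. For symmetric uniform $\bM$ and nonzero $v$, the vector $v^{\T}\bM$ is uniform on $\bbF_q^n$, and $v^\T\zeta$ is an independent uniform scalar, so each term is $q^{-n-1}$. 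Summing over the fewer than $q^n$ nonzero $v$ gives $1-z_\beta\le q^{-1}$ uniformly in $n$. A bound of order $q^{-1}$ is not summable against the roughly $p^d/d$ irreducibles of degree $d$, so it must be sharpened. I would refine it to $1-z_\beta \le C q_\beta^{-2}$ uniformly in $n$, and if that fails I would combine the Fulman–Goldstein bound \eqref{eq:NoisyHut} with the exact expression \eqref{eq:ZippySong}. The refinement comes from $\bbP(\rank_q\bM\le n-2)\lesssim q^{-3}$ together with the factor $1/q$ coming from $\zeta$ on the rank $n-1$ event. Since $\sum_\beta q_\beta^{-2}<\infty$, this justifies exchanging the limit with the infinite product. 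The first few degrees use the lemmas directly, and the tail is uniformly close to $1$.

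The remaining step is algebraic. Using \Cref{lem: Zeta} with $s=2$ and $s=2i+1$,
\[
\prod_\beta(1-q_\beta^{-2})\prod_{i\ge1}(1-q_\beta^{-2i-1}) = (1-p^{-1})\prod_{i\ge1}(1-p^{-2i}).
\]
Multiplying by $1+p^{-1}-p^{-3}$ and expanding $(1-p^{-1})(1+p^{-1}-p^{-3}) = 1-p^{-2}-p^{-3}+p^{-4}$ gives \eqref{eq:SlowWisp}.

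For \eqref{eq:CheekyRock}, the events $\cW_p$ for distinct $p$ depend on disjoint families of independent pieces, so the finite-$n$ probability factorizes over $p\in\sP$. For finite $\sP$ this gives the result immediately. For infinite $\sP$, I would pass to the limit by monotonicity: take the limsup over a finite subset, then let the subset grow. The lower bound needs $\sum_{p>P}\sup_n\bbP(\cW_p \text{ fails})\to0$. I would obtain this from the same uniform union bound, which gives $\bbP(\cW_p\text{ fails})\lesssim p^{-2}$ uniformly in $n$ and is summable over primes.
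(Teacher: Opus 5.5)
Your argument for a single prime, and hence for finite $\sP$, is sound. It differs from the paper only in how the limit is exchanged with the product over $\beta$. You dominate $1-z_\beta\le Cq_\beta^{-2}$ uniformly in $n$ and apply dominated convergence. Counting kernel lines rather than vectors already gives $\frac{q^n-1}{q-1}q^{-n-1}\le\frac{1}{q(q-1)}$. The paper instead writes $P_n=(1+\mathfrak{p}_n)P_\infty$ with relative errors $|\mathfrak{p}_n(p,\beta)|\le c\,p^{-n\deg\beta}$ and bounds the products directly. Your reason for the detour is mistaken, though. The bounds $6/q^n$ in \Cref{lem: TrivialCoker_pbeta,lem: 1D-cokernel} hold for every $\beta$ with an absolute constant. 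Hence $\sum_\beta 6q_\beta^{-n}\le 6\sum_{d\ge1}p^{d(1-n)}\to0$, which is exactly what the paper exploits.

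The gap is in the lower bound for infinite $\sP$. You claim that the rank-deficiency union bound gives $\bbP(\cW_p\text{ fails})=O(p^{-2})$ uniformly in $n$. That bound only controls whether pieces are nonzero. Here $\sum_{a\in\bbF_p}\bbP(\sM_{x-a}\neq0)\asymp p\cdot p^{-2}=p^{-1}$, which is essentially $\bbP(p\mid\det\bW)$ and is not summable over primes. Condition $\cW_p$ tolerates a nonzero linear piece as long as it is $\cong\bbF_p$. Since $\bbP(\cW_p\text{ fails})\ge\bbP(\exists a:\ \sM_{x-a}\neq0,\ \sM_{x-a}\not\cong\bbF_p)$, you genuinely need a finer estimate: $\bbP(\sM_{x-a}\neq0,\ \sM_{x-a}\not\cong\bbF_p)=1-z_{x-a}-o_{x-a}=O(p^{-3})$ uniformly for large $n$. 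You also need two easy $O(p^{-2})$ bounds: one for two nonzero linear pieces (by independence), and one for a nonzero piece of degree $\ge2$ (by your union bound). Together these give $\sup_{n\ge3}\bbP(\cW_p\text{ fails})=O(p^{-2})$, and your tail argument then closes.

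The finer estimate is true, but it does not come from the union bound. One route is the lemmas' uniform errors: $1-z_n-o_n\le(1-z_\infty-o_\infty)+12p^{-n}$, with $1-z_\infty-o_\infty=p^{-3}+O(p^{-4})$. Another is \Cref{cor: ReductionBounded}. There, each of the following costs a factor of order $1/p$: rank $n-1$; $\zeta$ vanishing on the cokernel line; and $k_1,k_2$ failing to span $\widehat{\mathfrak{m}}/\widehat{\mathfrak{m}}^2$. The event $\operatorname{rank}_q\le n-2$ separately has probability $O(p^{-3})$.

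The paper avoids failure probabilities altogether. It sandwiches the ratio of the finite-$n$ probability of $\cW_p$ to its limit between $1\pm p^{-(n-c_7)}$ in \eqref{eq:MadForce}, and uses $\prod_p(1\pm p^{-(n-c_7)})\to1$.
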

\begin{proof}
     Consider the following abbreviations for any prime $p$, polynomial $\beta(x)$, and integer $a\in \bbZ$:   
     \begin{equation}
        P_n(p,\beta) \de \bbP\Bigl(\coker(\bM_{p,\beta}, \zeta_{p,\beta}) \cong 0 \Bigr),  \ \  Q_n(p,a) \de \bbP\Bigl(\coker(\bM_{p,x-a}, \zeta_{p,x-a}) \cong \frac{\bbF_p[x]}{(x-a)\bbF_{p}[x]} \Bigr).\nonumber 
     \end{equation}
    Note that the two events (W1) and (W2) in \Cref{def: ConditionW} are mutually exclusive. 
    Moreover, the event (W2) can be further subdivided in $p$ disjoint events corresponding to the options for $a\bmod p$. 
    Hence, using the law of total probability as well as the independence and law of the pieces associated with varying $\beta(x)$ that was remarked upon preceding \Cref{lem: TrivialCoker_pbeta}, 
    \begin{equation}
         \bbP\Bigl(\coker(\bM-x\bI, \zeta) \textnormal{ satisfies condition }\cW_p \Bigr) = \prod_{\beta(x)} P_n(p,\beta) + \sum_{a=0}^{p-1} Q_n(p,a)\negsp\negsp \prod_{\beta(x)\not\equiv x-a}\negsp\negsp P_n(p,\beta). \label{eq:AncientVine}
    \end{equation}
    Further, again using the independence, 
    \begin{equation}
        \bbP\Bigl(\forall   p\in \sP : \coker(\bM-x\bI, \zeta) \textnormal{ satisfies }\cW_p \Bigr)  = \prod_{p\in \sP}\bbP\Bigl(\coker(\bM-x\bI, \zeta) \textnormal{ satisfies }\cW_p \Bigr).\label{eq:NiftyVine}
    \end{equation}
    
    We start by formally computing the right-hand side of  \eqref{eq:AncientVine} for $n=\infty$.  
    \Cref{lem: TrivialCoker_pbeta,lem: 1D-cokernel} ensure that $P_\infty(p,\beta) \de \lim_{n\to \infty}P_n(p,\beta)$ and $Q_\infty(p,a)\de \lim_{n\to \infty}Q_n(p,\beta)$ exist.  
    Substituting the values for $P_\infty(p,\beta)$ from  \Cref{lem: TrivialCoker_pbeta} and subsequently using \Cref{lem: Zeta} yields that  
   \begin{equation}
       \prod_{\beta(x)} P_\infty(p,\beta) 
        = \prod_{\beta(x) }\biggl( \Bigl(1-\frac{1}{p^{2\operatorname{deg(\beta)}}}\Bigr) \prod_{i\geq 1} \Bigl(1- \frac{1}{p^{(2i+1)\operatorname{deg(\beta)}}}\Bigr) \biggr)
        = \Bigl(1 - \frac{1}{p} \Bigr)\prod_{i\geq 1}  \Bigl(1-\frac{1}{p^{2i}} \Bigr). \label{eq:FastFog} 
   \end{equation}
   Comparing \Cref{lem: TrivialCoker_pbeta,lem: 1D-cokernel} shows that $Q_\infty(p,a) = p^{-2}(1-p^{-2}) P_\infty(p,x-a)$. 
   In particular,
   \begin{equation}
       \sum_{a=0}^{p-1} Q_\infty(p,a) \negsp\prod_{\beta(x)\not\equiv x-a}\negsp P_\infty(p,\beta) = \frac{1}{p}\Bigl(1-\frac{1}{p^{2}} \Bigr)\prod_{\beta(x)} P_\infty(p,\beta)
   \end{equation}
   Consequently, using \eqref{eq:FastFog} and that $(1 + p^{-1}(1-p^{-2}) ) (1 - p^{-1}) = 1 - p^{-2}  -p^{-3} +p^{-4}$,   
    \begin{equation}
        \prod_{\beta(x)} P_\infty(p,\beta) + \sum_{a=0}^{p-1} Q_\infty(p,a)\negsp \prod_{\beta(x)\not\equiv x-a}\negsp\negsp P_\infty(p,\beta) =\Bigl(1 - \frac{1}{p^2} - \frac{1}{p^3} + \frac{1}{p^4}\Bigr) \prod_{i=1}^\infty \Bigl(1 - \frac{1}{p^{2i}}\Bigr). \label{eq:KnownFace} 
    \end{equation}
    It remains to justify that the limit and product may be exchanged in \eqref{eq:AncientVine} and \eqref{eq:NiftyVine}.
    This follows from the error bounds in \Cref{lem: TrivialCoker_pbeta,lem: 1D-cokernel} by a direct computation, whose details we next provide.

    Define scalars $\mathfrak{p}_{n}(p,\beta), \mathfrak{q}_n(p,a) \in \bbR$ by 
    $
         P_n(p,\beta) = (1 + \mathfrak{p}_n(p,\beta)) P_\infty(p,\beta)$ 
    and 
    $Q_n(p,a) = (1+\mathfrak{q}_n(p,a)) Q_\infty(p,a). 
    $
    The explicit value for $P_\infty(p,\beta)$ in  \Cref{lem: TrivialCoker_pbeta} is an increasing function of $p$ and the degree of $\beta$.
    In particular, $P_\infty(p,\beta) \geq P_\infty(p,x) \geq P_\infty(2,x) \geq c_1$ for some absolute constant $c_1>0$.
    Hence, the error bounds from \Cref{lem: TrivialCoker_pbeta} imply that $\lvert \mathfrak{p}_n(p,\beta) \rvert \leq c_2 p^{-n\operatorname{deg}(\beta)}$ with $c_2 = 6/c_1$.
    It similarly follows from \Cref{lem: 1D-cokernel} that there exists $c_3 >0$ with $\lvert \mathfrak{q}_n(p,a) \rvert \leq c_3 p^{-(n-2)}$. 
    Let $c_4>0$ be a sufficiently large constant so that $c_2 p^{-n} \leq p^{-(n-c_4)}$ and $c_3 p^{-(n-2)} \leq p^{-(n-c_4)}$.

    There are at most $p^d$ monic irreducible polynomials $\beta \in \bbF_p[x]$ with degree $d$. 
    Hence,  for $n>c_4$, 
    \begin{align}
         &\textstyle \prod_{d\geq 1}\bigl(1 - p^{-d (n -c_4) }\bigr)^{p^d} \leq \prod_{\beta} \bigl(1+\mathfrak{p}_n(p,\beta)\bigr)  \leq  \prod_{d\geq 1}\bigl(1 +  p^{-d (n-c_4) }\bigr)^{p^d},\label{eq:HugeLog} \\ 
         &\textstyle \prod_{d\geq 1} (1-  p^{-d(n-c_4)})^{p^d} \leq  \bigl(1+\mathfrak{q}(p,a)\bigr) \prod_{\beta \neq x-a} \bigl(1+\mathfrak{p}_n(p,\beta)\bigr)\leq    \prod_{d\geq 1} (1+  p^{-d(n-c_4)})^{p^d}. \label{eq:SpikyPug} 
    \end{align}
    Using that $1+x \leq \exp(x)$ for $x\geq 0$, we have $\prod_{d\geq 1} (1+  p^{-d(n-c_4)})^{p^d} \leq  \exp(\sum_{d\geq 1} p^{-d(n-c_4-1)})$.
    Summing the series and using a Taylor approximation, there hence exists $c_5 >0$ with $\prod_{d\geq 1} (1+  p^{-d(n-c_4)})^{p^d} \leq 1 + c_5 p^{-(n-c_4 -1)}$ for large $n$. 
    For the lower bound, note that a Taylor approximation yields $c_6>0$ with $1/(1-p^{-d(n-c_4)}) \leq 1 + c_6p^{-d(n -c_4)}$. 
    Absorbing the constant in the exponent allows upper bounding $1/\prod_{d}(1-p^{-d(n-c_4)})^{p^d}$ as above, which yields a lower bound on $\prod_{d}(1-p^{-d(n-c_4)})^{p^d}$. 
    It follows in this fashion that there exists an absolute constant $c_7 >0$ such that for all large $n$,  
    \begin{align}
        \textstyle \prod_{d\geq 1} \bigl(1+p^{-(n-c_4)}\bigr)^{p^d} \leq  1+ p^{- (n-c_7)}\  \textnormal{ and }\   \prod_{d\geq 1} \bigl(1-p^{-d(n-c_4)}\bigr)^{p^d} \geq  1- p^{-(n-c_7)}.\label{eq:SnappyDen}
    \end{align}
    
    Recall that  
    $
         P_n(p,\beta) = (1 + \mathfrak{p}_n(p,\beta)) P_\infty(p,\beta)$ 
    and 
    $Q_n(p,a) = (1+\mathfrak{q}_n(p,a)) Q_\infty(p,a) 
    $.
    It consequently follows from \eqref{eq:HugeLog}--\eqref{eq:SnappyDen} that for every set of primes $\sP$, 
    \begin{align}
        \prod_{p\in \sP} \Bigl(1 -  \frac{1}{p^{n-c_7}}\Bigr)  \leq \prod_{p\in \sP }\frac{P_n(p,\beta) + \sum_{a=0}^{p-1} Q_n(p,a) \prod_{\beta \neq x-a} P_n(p,\beta)}{P_\infty(p,\beta) + \sum_{a=0}^{p-1} Q_\infty(p,a) \prod_{\beta \neq x-a} P_\infty(p,\beta)} \leq \prod_{p\in \sP} \Bigl(1 +  \frac{1}{p^{n-c_7}}\Bigr). \label{eq:MadForce} 
    \end{align}
    Expanding the product in the lower bound \eqref{eq:MadForce} to all primes $p$ yields $1/\zeta(n -c_7)$ with $\zeta$ the Riemann zeta function, while the upper bound yields $\zeta(n-c_7)/\zeta(2(n-c_7))$.   
    Use that $\zeta(x) \to 1$ as $x\to +\infty$ to conclude from \eqref{eq:KnownFace} that \eqref{eq:CheekyRock} holds true, and hence also \eqref{eq:SlowWisp} as the case $\sP = \{p \}$. 
\end{proof}

\subsection{Conditions based on the discriminant}\label{sec: ResultsDiscriminant}

The following \Cref{def: ConditionD1,def: ConditionD2} give profinite analogues for the conditions in \Cref{prop: ModuleRephrasingDiscriminantCondition,prop: ModuleRephrasingDiscriminantConditionTwo}. 
Both definitions also allow $p=2$.   
\begin{definition}\label{def: ConditionD1}
    Fix a prime $p$.
    Then, a $\widehat{R}$-module $\sM$ is said to satisfy \emph{condition $\cD_p^{1}$} if it holds for every monic $\beta(x) \in \bbZ[x]$ with irreducible reduction in $\bbF_p[x]$ that $\widehat{\sM}_{p,\beta} \de \sM\otimes_{\widehat{R}} R_{p,\beta}$ satisfies
        \begin{equation}
              \widehat{\sM}_{p,\beta}/p\widehat{\sM}_{p,\beta} \cong 0 \qquad \textnormal{or} \qquad  \widehat{\sM}_{p,\beta}/p\widehat{\sM}_{p,\beta}  \cong   \bbF_p[x] /\beta(x)\bbF_p[x].\label{eq:DarkTea}
        \end{equation}
\end{definition}
\begin{definition}\label{def: ConditionD2}
   Fix a prime $p$.
   Then, a $\widehat{R}$-module $\sM$ is said to satisfy \emph{condition $\cD_p^{2}$} if there exists $a\in \bbZ_p$ such that \eqref{eq:DarkTea} holds for every $\beta \not\equiv x-a \bmod p$, and 
        \begin{equation}
            \widehat{\sM}_{p,x-a}/p\widehat{\sM}_{p,x-a} \cong \bbF_p[x]/(x-a)^2 \bbF_p[x]\ \ \textnormal{ and }\ \ \widehat{\sM}_{p,x-a}/(x-a) \widehat{\sM}_{p,x-a} \cong \bbF_p[x]/(x-a)\bbF_p[x].
        \end{equation}
\end{definition}

The hat in the notation $\widehat{\sM}_{p,\beta}$ serves to emphasize that the tensor product is taken over the profinite completion $\widehat{R}$, not over  $R=\bbZ[x]$ itself as in \eqref{eq: Def_NotationGpMpbeta}. 
This serves to avoid ambiguity related to $\widehat{R}$-modules also being $R$-modules but will otherwise not be significant.

In view of  \Cref{prop: ModuleRephrasingDiscriminantCondition}, the discriminant being odd can be modeled by the event that $\cD_p^1$ is satisfied for $p=2$.
Further, considering \Cref{prop: ModuleRephrasingDiscriminantConditionTwo}, additionally imposing square-freeness can be modeled by $\cD_p^1$ or $\cD_p^2$ being satisfied for all odd primes $p$. 
Thus, we find a profinite model for the setting of \Cref{conj: Discriminant}.

Recall from \eqref{eq:WeepyJar}--\eqref{eq:BoldAnt} that it holds for $\bM \sim \operatorname{SymHaar}(\widehat{R}^{n\times n})$ that the random modules $\coker(\bM-x\bI) \otimes_{\widehat{R}} R_{p,\beta}$ for varying $(p,\beta)$ are independent with the same distribution as $\coker(\bM_{p,\beta})$ for $\bM_{p,\beta}\sim \operatorname{SymHaar}(R_{p,\beta}^{n\times n})$. 
It hence suffices to study these pieces separately, which \Cref{cor: ReductionBounded} again reduces to a finite direct computation in \Cref{lem: CokMp_Trivial,lem: CokMp_Fq,lem: D2}.
We conclude in \Cref{thm: MainHaarDisc}.

\begin{lemma}\label{lem: CokMp_Trivial}
    Let $q = p^{\operatorname{deg}(\beta)}$. Then, for every $n\geq 1$, 
    \begin{align}
        \biggl\lvert \, \bbP\biggl(\frac{\coker(\bM_{p,\beta})}{p\coker(\bM_{p,\beta})}  \cong 0   \biggr) - \prod_{i\geq 0}\Bigl(1 - \frac{1}{q^{2i+1}} \Bigr)     \, \biggr\rvert \leq \frac{3}{q^n}.\label{eq:LazyKnot}
    \end{align}
\end{lemma}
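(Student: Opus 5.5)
The plan is to reduce the event $\coker(\bM_{p,\beta})/p\coker(\bM_{p,\beta}) \cong 0$ to full rank of $\bM_{p,\beta}$ over $\bbF_q$, and then apply the rank estimate \eqref{eq:NoisyHut} with $k=0$.

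First I would identify the quotient. Right-exactness of $\otimes$ gives
\begin{equation*}
    \coker(\bM_{p,\beta})/p\coker(\bM_{p,\beta}) \cong \coker(\bM_{p,\beta}) \otimes_{R_{p,\beta}} R_{p,\beta}/pR_{p,\beta}.
\end{equation*}
This is the cokernel of the reduction of $\bM_{p,\beta}$ modulo $p$, viewed as a matrix over $R_{p,\beta}/pR_{p,\beta}$.

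Next I would check that this module vanishes if and only if $\coker(\bM_{p,\beta})\cong 0$. One direction is trivial. For the other, note that $p$ lies in the maximal ideal $\widehat{\mathfrak{m}} = pR_{p,\beta} + \beta(x)R_{p,\beta}$. The cokernel is a finitely generated $R_{p,\beta}$-module, so Nakayama's lemma \cite[Proposition 2.6]{atiyah2018introduction} applies: if $N = pN$ then $N = \widehat{\mathfrak{m}}N$, and hence $N = 0$.

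By the same Nakayama argument used in \Cref{lem: TrivialCoker_pbeta}, $\coker(\bM_{p,\beta}) \cong 0$ is equivalent to the square matrix $\bM_{p,\beta}$ having full rank $n$ over $\bbF_q \cong R_{p,\beta}/\widehat{\mathfrak{m}}$. Alternatively, one can see this directly from the fact that a matrix over a local ring is invertible exactly when its reduction modulo the maximal ideal is.

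It follows that
\begin{equation*}
    \bbP\bigl(\coker(\bM_{p,\beta})/p\coker(\bM_{p,\beta})\cong 0\bigr) = \bbP\bigl(\rank_q(\bM_{p,\beta}) = n\bigr).
\end{equation*}
Since the reduction of $\bM_{p,\beta}$ modulo $\widehat{\mathfrak{m}}$ is a uniform random symmetric matrix over $\bbF_q$, the Fulman--Goldstein bound \eqref{eq:NoisyHut} with $k=0$ applies. The product $\prod_{i=1}^0$ is empty, so the limiting value is $\prod_{i\ge 0}(1-q^{-2i-1})$, with error at most $3/q^n$. This is exactly \eqref{eq:LazyKnot}.

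The only step requiring care is the Nakayama reduction from "trivial modulo $p$" to "trivial". This step works only because $p \in \widehat{\mathfrak{m}}$ and $R_{p,\beta}$ is local; everything else is a direct citation.
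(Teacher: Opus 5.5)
Your proposal is correct and follows the paper's route: Nakayama's lemma reduces triviality of $\coker(\bM_{p,\beta})/p\coker(\bM_{p,\beta})$ to $\bM_{p,\beta}$ having full rank over $\bbF_q$, and the Fulman--Goldstein bound \eqref{eq:NoisyHut} with $k=0$ then gives the estimate. The only difference is cosmetic. The paper applies Nakayama once, over the local ring $R_{p,\beta}/pR_{p,\beta}$, whereas you first pass to $\coker(\bM_{p,\beta})\cong 0$ via Nakayama over $R_{p,\beta}$ and then reduce modulo $\widehat{\mathfrak{m}}$.
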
 
\begin{proof}
    Nakayama's lemma \cite[Proposition 2.6]{atiyah2018introduction} applied to the local ring $R_{p,\beta}/pR_{p,\beta}$ yields that the quotient $\coker(\bM_{p,\beta})/p\coker(\bM_{p,\beta})$ is trivial if and only if $\bM_{p,\beta}$ is full rank over the finite field $\bbF_q \de \bbF_{p}[x]/\beta(x)\bbF_{p}[x]$. 
    The result is hence immediate from \cite[Theorem 4.1]{fulman2015stein}; recall also \eqref{eq:NoisyHut}. 
\end{proof}

\begin{lemma}\label{lem: CokMp_Fq}
    Let $q = p^{\operatorname{deg}(\beta)}$. Then, for every $n\geq 1$, 
    \begin{align}
        \biggl\lvert \, \bbP\biggl(\frac{\coker(\bM_{p,\beta})}{p\coker(\bM_{p,\beta})}  \cong  \frac{\bbF_{p}[x]}{ \beta(x) \bbF_p[x]}   \biggr) -  \frac{1}{q} \prod_{i\geq 0}\Bigl(1 - \frac{1}{q^{2i+1}} \Bigr)    \, \biggr\rvert \leq \frac{3}{q^n} \label{eq:ZippyWok}
    \end{align}
\end{lemma}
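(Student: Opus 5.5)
The plan is to split according to the rank of $\bM_{p,\beta}$ modulo the maximal ideal $\widehat{\mathfrak{m}} = pR_{p,\beta} + \beta(x)R_{p,\beta}$, and then use \Cref{cor: ReductionBounded} to reduce the conditional event to a $1\times 1$ computation. Write $\bbF_q \de \bbF_p[x]/\beta(x)\bbF_p[x]$ and $\sN \de \coker(\bM_{p,\beta})/p\coker(\bM_{p,\beta})$.

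First I will show that the event forces $\rank_q(\bM_{p,\beta}) = n-1$. We have $\sN\otimes_{R_{p,\beta}}\bbF_q \cong \coker(\bM_{p,\beta})\otimes_{R_{p,\beta}} \bbF_q = \coker(\bM_{p,\beta} \bmod \widehat{\mathfrak{m}})$. If $\sN\cong \bbF_q$, this space is one-dimensional over $\bbF_q$, so $\bM_{p,\beta}$ has rank exactly $n-1$ modulo $\widehat{\mathfrak{m}}$. The law of total probability then gives
\begin{equation*}
\bbP\bigl(\sN\cong \bbF_q\bigr) = \bbP\bigl(\sN\cong\bbF_q \mid \rank_q(\bM_{p,\beta}) = n-1\bigr)\,\bbP\bigl(\rank_q(\bM_{p,\beta})=n-1\bigr).
\end{equation*}

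Next I will compute the conditional probability. On the event $\rank_q(\bM_{p,\beta}) = n-1$, \Cref{cor: ReductionBounded} shows that $\coker(\bM_{p,\beta})$ has the law of $\coker(k_1) = R_{p,\beta}/k_1R_{p,\beta}$, where $k_1 = pH_1 + \beta H_2$ with $H_1,H_2\sim \operatorname{Haar}(R_{p,\beta})$ independent. Hence $\sN$ has the law of $\overline{R}/\bar k_1\overline{R}$, where $\overline{R} \de R_{p,\beta}/pR_{p,\beta}$ and $\bar k_1 = \beta \bar H_2$. By \eqref{eq: ProfiniteZpbeta}, $\overline{R}$ is the $\beta$-adic completion of $\bbF_p[x]$. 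This is a complete discrete valuation ring with uniformizer $\beta$ and residue field $\bbF_q$, and $\bar H_2$ is Haar distributed on it. Therefore $\overline{R}/\beta\bar H_2\overline{R} \cong \bbF_q$ exactly when $\bar H_2$ is a unit, that is, when $\bar H_2\not\equiv 0 \bmod \beta$. Otherwise the quotient has length at least $2$. Since $\bar H_2 \bmod \beta$ is uniform on $\bbF_q$, the conditional probability equals $1-1/q$.

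Finally I will combine this with the rank estimate \eqref{eq:NoisyHut} for $k=1$ from \cite[Theorem 4.1]{fulman2015stein}. The limiting main term is
\begin{equation*}
\frac{1}{q-1}\Bigl(1-\frac1q\Bigr)\prod_{i\geq 0}\Bigl(1-\frac{1}{q^{2i+1}}\Bigr) = \frac1q\prod_{i\geq 0}\Bigl(1-\frac{1}{q^{2i+1}}\Bigr).
\end{equation*}
The error is at most $(1-1/q)\cdot 3/q^n \leq 3/q^n$, which gives \eqref{eq:ZippyWok}.

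The only step needing care is the identification of $R_{p,\beta}/pR_{p,\beta}$ as a DVR with uniformizer $\beta$ and Haar measure reducing to the Haar measure there. This follows from the explicit series description in \eqref{eq: ProfiniteZpbeta} and \eqref{eq: HaarElementZpbeta} upon reducing the $\bbZ_p$-coefficients modulo $p$.
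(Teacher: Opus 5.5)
Your proposal is correct and is essentially the paper's proof. You force $\rank_q(\bM_{p,\beta}) = n-1$, reduce via \Cref{cor: ReductionBounded} to a single element $pH_1 + \beta H_2$ of $\widehat{\mathfrak{m}}$, observe that the quotient is $\bbF_q$ exactly when $H_2 \notin \widehat{\mathfrak{m}}$ (probability $1-1/q$), and combine with the Fulman--Goldstein rank estimate. Your discrete-valuation-ring description of $R_{p,\beta}/pR_{p,\beta}$ justifies the ``if and only if'' step, which the paper states without elaboration.
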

\begin{proof}
    Denote $\bbF_q \cong \bbF_p[x]/\beta(x)\bbF_p[x]$. 
    Note that $\coker(\bM_{p,\beta})/p\coker(\bM_{p,\beta}) \cong \bbF_q$ implies that $\coker(\bM_{p,\beta})\otimes_{R_{p,\beta}} \bbF_q  \cong \bbF_q$ which is only possible if $\bM_{p,\beta}$ has rank $n-1$ over $\bbF_q$.  
    Hence, by \Cref{cor: ReductionBounded}, it holds with $k\sim \operatorname{Haar}(  pR_{p,\beta} + \beta(x)R_{p,\beta})$ that 
    \begin{equation}
        \bbP\bigl(\coker(\bM_{p,\beta})/p\coker(\bM_{p,\beta})  \cong  \bbF_q  \bigr)  = \bbP\bigl(\operatorname{rank}_q(\bM_{p,\beta})=n-1 \bigr)\bbP\bigl( {\coker(k)}/{p\coker(k)} \cong \bbF_q \bigr).\label{eq:WeavingMud}
    \end{equation}
    Decompose $k = k_1 p + k_2 \beta(x)$ for $k_1,k_2 \sim \operatorname{Haar}(R_{p,\beta})$. 
    Then, it holds that $\coker(k)/p\coker(k)\cong \bbF_q$  if and only if and only if $k_2 \not\in pR_{p,\beta} + \beta(x) R_{p,\beta}$.
    Hence, \eqref{eq:ZippyWok} follows from \eqref{eq:WeavingMud} by using \cite[Theorem 4.1]{fulman2015stein} (recall \eqref{eq:NoisyHut}) to estimate the probability of rank $n-1$ and using that $k_2$ reduces to a nonzero element in $\bbF_q$ with probability $1- 1/q$. 
    We here simplified using that $(1-1/q)/(q-1) = 1/q$. 
\end{proof}

\begin{lemma}\label{lem: D2}
    Let $q = p^{\deg(\beta)}$. 
    Then, for every $n\geq 1$,  
    \begin{align}
        \biggl\lvert \, \bbP\biggl(\frac{\coker(\bM_{p,\beta})}{p\coker(\bM_{p,\beta})}  \cong  \frac{\bbF_{p}[x]}{ \beta(x)^2 \bbF_p[x]},\,  \frac{\coker(\bM_{p,\beta})}{\beta(x)\coker(\bM_{p,\beta})} \cong \frac{\bbF_{p}[x]}{ \beta(x) \bbF_p[x]} \biggr) &-   \frac{1}{q^2}\Bigl(1 - \frac{1}{q}\Bigr) \prod_{i\geq 0}\Bigl(1 - \frac{1}{q^{2i+1}} \Bigr)    \, \biggr\rvert \nonumber\\ 
        &\leq \frac{3}{q^n}.\label{eq:DangerousNet} 
    \end{align}
\end{lemma}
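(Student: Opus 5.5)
The plan is to reduce, exactly as in \Cref{lem: CokMp_Fq}, to the event that $\bM_{p,\beta}$ has corank one over $\bbF_q \de \bbF_p[x]/\beta(x)\bbF_p[x]$, and then to do a one-variable computation with \Cref{cor: ReductionBounded}.

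\emph{Step 1: only rank $n-1$ contributes.} Write $\sN \de \coker(\bM_{p,\beta})$ and $\widehat{\mathfrak{m}} \de pR_{p,\beta}+\beta(x)R_{p,\beta}$. Suppose $\sN/p\sN \cong \bbF_p[x]/\beta^2\bbF_p[x]$. Reducing further modulo $\beta$ gives $\sN\otimes_{R_{p,\beta}}\bbF_q \cong \bbF_q$, which is one-dimensional. Hence $\rank_q(\bM_{p,\beta}) = n-1$. By \Cref{cor: ReductionBounded}, the joint event then has probability
\[
\bbP\bigl(\rank_q(\bM_{p,\beta})=n-1\bigr)\,\bbP\bigl(\cE(k)\bigr),
\]
where $k \sim \operatorname{Haar}(\widehat{\mathfrak{m}})$ and $\cE(k)$ denotes the two module conditions for $\coker(k)=R_{p,\beta}/kR_{p,\beta}$. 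As in the proof of \Cref{lem: CokMp_Fq}, I write $k = pk_1+\beta k_2$ with $k_1,k_2\sim\operatorname{Haar}(R_{p,\beta})$ independent.

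\emph{Step 2: translate each condition into a condition on one of $k_1,k_2$.}

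For the first condition, use that $R_{p,\beta}/pR_{p,\beta}\cong \bbF_q[\![\beta]\!]$, which follows from \eqref{eq: ProfiniteZpbeta}. Then $\coker(k)/p\coker(k) \cong \bbF_q[\![\beta]\!]/(\beta \bar k_2)$, where $\bar k_2$ is the reduction of $k_2$. This is isomorphic to $\bbF_p[x]/\beta^2\bbF_p[x]$ exactly when $\bar k_2$ has $\beta$-adic valuation exactly $1$. In terms of \eqref{eq: HaarElementZpbeta}, the constant coefficient $c_0$ of $k_2$ must vanish modulo $p$ and $c_1$ must be nonzero modulo $(p,\beta)$. This has probability $q^{-1}(1-q^{-1})$.

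For the second condition, $R_{p,\beta}/\beta R_{p,\beta}$ is the unramified extension $\bbZ_p[x]/(\beta)$, a DVR with uniformizer $p$. Thus $\coker(k)/\beta\coker(k) \cong (\bbZ_p[x]/\beta)/(pk_1)$. This is $\cong \bbF_q$ exactly when $k_1\notin\widehat{\mathfrak{m}}$, which has probability $1-q^{-1}$.

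Since $k_1$ and $k_2$ are independent, $\bbP(\cE(k)) = q^{-1}(1-q^{-1})^2$.

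\emph{Step 3: combine with the rank probability.} By \eqref{eq:NoisyHut}, $\bbP(\rank_q = n-1)$ equals $(q-1)^{-1}\prod_{i\ge0}(1-q^{-2i-1})$ up to an error of at most $3/q^n$. Multiplying by $q^{-1}(1-q^{-1})^2$ and using $(1-q^{-1})/(q-1)=q^{-1}$ gives the main term $q^{-2}(1-q^{-1})\prod_{i\geq 0}(1-q^{-2i-1})$. The error stays at most $3/q^n$ because the factor multiplying the rank probability is at most $1$.

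\emph{Main obstacle.} The delicate step is Step 2: correctly identifying $\coker(k)$ modulo $p$ and modulo $\beta$ through the structure of $R_{p,\beta}$. It also requires checking that the decomposition $k=pk_1+\beta k_2$ is legitimate distributionally, even though it is not unique. This holds because the Haar measure on $\widehat{\mathfrak{m}}$ is, by definition, the law of $pH_1+\beta H_2$ with $H_1,H_2$ independent and Haar distributed.
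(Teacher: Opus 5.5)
Your proposal is correct and follows essentially the same route as the paper. Both arguments reduce to the corank-one event via the reduction corollary and split $k = pk_1 + \beta k_2$. The condition modulo $p$ is read off from $k_2$ (probability $q^{-1}(1-q^{-1})$) and the condition modulo $\beta$ from $k_1$ (probability $1-q^{-1}$). The product is then multiplied by the Fulman--Goldstein estimate for the rank-$(n-1)$ probability. Your identifications of $R_{p,\beta}/pR_{p,\beta}$ and $R_{p,\beta}/\beta R_{p,\beta}$ spell out what the paper states more briefly.
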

\begin{proof}
    Denote $\bbF_q \de \bbF_p[x]/\beta(x)\bbF_p[x]$. 
    In particular, the event on the left-hand side of \eqref{eq:DangerousNet} implies that $\coker(\bM_{p,\beta})\otimes_{R_{p,\beta}} \bbF_q \cong \bbF_q$, which necessitates that $ \rank_q(\bM_{p,\beta}) = n-1$. 
    Hence, by the law of total probability and \Cref{cor: ReductionBounded}, it holds with $k\sim \operatorname{Haar}(  pR_{p,\beta} + \beta(x)R_{p,\beta})$ that 
    \begin{align}
        \bbP{}&{}\Bigl(\frac{\coker(\bM_{p,\beta})}{p\coker(\bM_{p,\beta})}  \cong  \frac{\bbF_{p}[x]}{ \beta(x)^2 \bbF_p[x]},\  \frac{\coker(\bM_{p,\beta})}{\beta(x)\coker(\bM_{p,\beta})} \cong \frac{\bbF_{p}[x]}{ \beta(x) \bbF_p[x]} \Bigr) \label{eq:TinyHome}\\ 
        &\quad = \bbP\Bigl( \operatorname{rank}_q(\bM_{p,\beta}) = n-1\Bigr)\bbP\Bigl(\frac{\coker(k)}{p\coker(k)}  \cong  \frac{\bbF_{p}[x]}{ \beta(x)^2 \bbF_p[x]},\  \frac{\coker(k)}{\beta(x)\coker(k)} \cong \frac{\bbF_{p}[x]}{ \beta(x) \bbF_p[x]} \Bigr).\nonumber
    \end{align}
    Decompose $k = k_1 p + k_2 \beta(x)$ for $k_1,k_2 \sim \operatorname{Haar}(R_{p,\beta})$ independent.
    Then, $\coker(k)/p\coker(k) \cong \bbF_p[x]/\beta(x)^2 \bbF_p[x]$ if and only if $k_2 \equiv c \beta(x) \bmod pR_{p,\beta} + \beta^{2}R_{p,\beta}$ for some $c\in \bbF_q \setminus \{0 \}$.
    It follows from the explicit power series representation \eqref{eq: HaarElementZpbeta} that this occurs with probability $q^{-1}(1-q^{-1})$, since we require that the first coefficient  in $k_2 = \sum_{i=0}^\infty c_i \beta^i$ vanishes modulo $p$ while the second should not.
    It further holds that $\coker(k)/\beta(x)\coker(k) \cong \bbF_p[x]/\beta(x) \bbF_p[x]$ if and only if $k_1$ has nonzero reduction in $\bbF_q$, which occurs with probability $1-q^{-1}$.
    Hence, by the independence of $k_1$ and $k_2$,  
    \begin{align}
         \bbP\Bigl(\frac{\coker(k)}{p\coker(k)}  \cong  \frac{\bbF_{p}[x]}{ \beta(x)^2 \bbF_p[x]},\  \frac{\coker(k)}{\beta(x)\coker(k)} \cong \frac{\bbF_{p}[x]}{ \beta(x) \bbF_p[x]} \Bigr) = \frac{1}{q}\Bigl(1-\frac{1}{q}\Bigr)^2. 
    \end{align}
    Using \cite[Theorem 4.1]{fulman2015stein} (recall \eqref{eq:NoisyHut}) to estimate the probability of rank $n-1$ in \eqref{eq:TinyHome} and simplifying using that $q^{-1}(1-q^{-1})^2/(q-1) = q^{-2}(1-q^{-1})$ now yields \eqref{eq:DangerousNet}.  
\end{proof}

Recall conditions $\cD_p^1$ and $\cD_p^2$ from \Cref{def: ConditionD1,def: ConditionD2}. 
Combining \Cref{lem: CokMp_Trivial,lem: CokMp_Fq,lem: D2} yields our main result concerning the satisfaction frequency of these conditions in the profinite model.
In particular, the following \Cref{thm: MainHaarDisc} justifies \Cref{conj: Discriminant}:
\begin{theorem}\label{thm: MainHaarDisc}
     Let $\bM\sim \operatorname{SymHaar}(\widehat{R}^{n\times n})$. 
     Then, for every fixed prime $p$, 
     \begin{align}
        &\lim_{n\to \infty}\bbP\Bigl(\coker(\bM - x\bI ) \textnormal{ satisfies condition }\cD_p^1 \Bigr) = \Bigl( 1 - \frac{1}{p}\Bigr) \prod_{i=1}^\infty\Bigl(1 - \frac{1}{p^{2i}}\Bigr) ,\label{eq:StormyCity} \\
        &\lim_{n\to \infty}\bbP\Bigl(\coker(\bM - x\bI ) \textnormal{ satisfies condition } \cD_p^1 \textnormal{ or }\cD_p^2 \Bigr)= \Bigl(1 - \frac{1}{p^2} \frac{3p-1}{p+1}\Bigr)\prod_{i=1}^\infty\Bigl(1 - \frac{1}{p^{2i}}\Bigr). \label{eq:FuzzyGut} 
     \end{align}
     Moreover, let $\cD^*$ denote the condition that $\cD_p^1$ or $\cD_p^{2}$ is satisfied for every odd prime $p$, and that $\cD_p^1$ is satisfied for $p=2$. 
     Then, 
     \begin{align}
        \lim_{n\to \infty}\bbP\Bigl(\coker(\bM - x\bI ) \textnormal{ satisfies condition }\cD^*    \Bigr) = \frac{6}{7} \prod_{\textnormal{primes }p}\Bigl(1 - \frac{1}{p^2} \frac{3p-1}{p+1}\Bigr)\prod_{i=1}^\infty\Bigl(1 - \frac{1}{p^{2i}}\Bigr).\label{eq:SillyKey} 
     \end{align}
\end{theorem}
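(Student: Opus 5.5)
The proof will mirror that of \Cref{thm: HaarMainWalk}. We first use the independence of the pieces $\coker(\bM-x\bI)\otimes_{\widehat{R}}R_{p,\beta}\stackrel{d}{=}\coker(\bM_{p,\beta})$ over the pairs $(p,\beta)$, recalled before \Cref{lem: CokMp_Trivial}. For a fixed $p$ this gives
\begin{equation*}
\bbP(\cD_p^1)=\prod_{\beta}A_n(p,\beta),\qquad \bbP(\cD_p^2)=\sum_{a=0}^{p-1}B_n(p,a)\prod_{\beta\not\equiv x-a}A_n(p,\beta),
\end{equation*}
where $A_n(p,\beta)$ is the probability of \eqref{eq:DarkTea}, that is, the sum of the probabilities in \Cref{lem: CokMp_Trivial} and \Cref{lem: CokMp_Fq}, and $B_n(p,a)$ is the probability in \Cref{lem: D2} with $\beta=x-a$. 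The two conditions are disjoint, since \eqref{eq:DarkTea} fails at $x-a$ under $\cD_p^2$, and the $p$ choices of $a$ give disjoint events. Across distinct primes the events are independent, so the probability of $\cD^*$ factorizes over $p$.

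Next we compute the formal limits. From the lemmas, $A_\infty(p,\beta)=(1+q^{-1})\prod_{i\ge0}(1-q^{-2i-1})=(1-q^{-2})\prod_{i\ge1}(1-q^{-2i-1})$ with $q=p^{\deg\beta}$. This is exactly the value $P_\infty$ from \Cref{lem: TrivialCoker_pbeta}, so \eqref{eq:FastFog} applies verbatim and gives $\prod_\beta A_\infty=(1-p^{-1})\prod_{i\ge1}(1-p^{-2i})$, which is \eqref{eq:StormyCity}. For $\beta=x-a$ we have $q=p$ and $B_\infty/A_\infty=p^{-2}(1-p^{-1})/(1+p^{-1})$. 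Hence
\begin{equation*}
\bbP(\cD_p^1\text{ or }\cD_p^2)\to\Bigl(1+\frac{1}{p}\cdot\frac{p-1}{p+1}\Bigr)\Bigl(1-\frac1p\Bigr)\prod_{i\ge1}(1-p^{-2i}).
\end{equation*}
Expanding gives $(1-1/p)(1+(p-1)/(p(p+1)))=1-(3p-1)/(p^2(p+1))$, which is \eqref{eq:FuzzyGut}. For \eqref{eq:SillyKey}, the $p=2$ factor from \eqref{eq:StormyCity} is $\frac12\prod_{i\ge1}(1-4^{-i})$. The $p=2$ instance of the \eqref{eq:FuzzyGut} formula is $(1-\tfrac{5}{12})\prod_{i\ge1}(1-4^{-i})=\tfrac{7}{12}\prod_{i\ge1}(1-4^{-i})$, and the ratio of the two is $6/7$. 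Therefore the product over all primes of the \eqref{eq:FuzzyGut} factors, multiplied by $6/7$, equals the limit of the product of the per-prime limits.

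The main obstacle is exchanging the limit $n\to\infty$ with the infinite products over $\beta$ and over all primes. We will copy the argument of \eqref{eq:HugeLog}--\eqref{eq:MadForce}. The limits $A_\infty(p,\beta)$ are bounded below by an absolute constant, and the error bounds $3/q^n$ in \Cref{lem: CokMp_Trivial,lem: CokMp_Fq,lem: D2} then give relative errors $|\mathfrak a_n|\le c\,p^{-n\deg\beta}$ and $|\mathfrak b_n|\le c\,p^{-(n-2)}$. There are at most $p^d$ irreducible polynomials of degree $d$, so the ratio of finite-$n$ to limiting per-prime probabilities lies in $[1-p^{-(n-c)},1+p^{-(n-c)}]$. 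Taking the product over all primes gives bounds between $1/\zeta(n-c)$ and $\zeta(n-c)/\zeta(2(n-c))$, both of which tend to $1$. One detail to check is that the infinite product in \eqref{eq:SillyKey} converges to a positive limit. Its factors are $1-O(p^{-2})$, so this holds, and it justifies treating the condition over infinitely many primes as a limit of the products.
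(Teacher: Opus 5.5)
Your proposal is correct and follows the paper's proof essentially step for step. It uses the same factorization over the pieces $(p,\beta)$ by independence, the same disjoint split of $\cD_p^2$ over $a \bmod p$, the same per-piece limits from \Cref{lem: CokMp_Trivial,lem: CokMp_Fq,lem: D2} combined through \Cref{lem: Zeta}, and the same $6/7$ correction at $p=2$.

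The only differences are that you sketch the exchange of the limit with the infinite products, which the paper omits by pointing to \Cref{thm: HaarMainWalk}, and that you note directly that $A_\infty(p,\beta)=P_\infty(p,\beta)$, so \eqref{eq:FastFog} can be reused without redoing the computation.
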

\begin{proof}
    That limits may be exchanged with infinite products here follows similarly to the proof of \Cref{thm: HaarMainWalk}, so we omit these details for brevity. 
    It then follows by substituting the limiting values of \Cref{lem: CokMp_Trivial,lem: CokMp_Fq} in \Cref{def: ConditionD1} using independence that  
    \begin{align}
        \lim_{n\to \infty}\bbP\Bigl(\coker(\bM - x\bI ) \textnormal{ satisfies condition }\cD_p^1 \Bigr)  = \prod_{\beta(x)} \Bigl(1 + \frac{1}{p^{\operatorname{deg(\beta)}}} \Bigr) \prod_{i=0}^\infty \Bigl( 1 - \frac{1}{p^{(2i +1) \deg(\beta)}} \Bigr). \label{eq:CheekyBoy} 
    \end{align}
    Combining the first factor and the factor with $i = 0$ in the second product using that $(1 + p^{-\operatorname{deg}(\beta)})(1 - p^{-\operatorname{deg}(\beta)}) = 1-p^{2\operatorname{deg}(\beta)}$ and subsequently using \Cref{lem: Zeta} now yields \eqref{eq:LazyKnot}.  

    The event $\cD_p^2$ can be subdivided in $p$ mutually exclusive events depending on $a\bmod p$.   
    Hence, substituting the limiting values from \Cref{lem: CokMp_Trivial,lem: CokMp_Fq,lem: D2} in \Cref{def: ConditionD2}, 
    \begin{align}
        \lim_{n\to \infty}{}&{}\bbP\Bigl(\coker(\bM - x\bI ) \textnormal{ satisfies condition }\cD_p^2 \Bigr)\label{eq:UnripePug}\\
        &= \sum_{a=0}^{p-1} \biggl(\frac{1}{p^2} \Bigl(1 - \frac{1}{p} \Bigr)\prod_{i=0}^\infty \Bigl( 1 - \frac{1}{p^{2i +1}} \Bigr)\biggr)\prod_{\beta \not\equiv x-a}  \Bigl(1 + \frac{1}{p^{\operatorname{deg(\beta)}}} \Bigr) \prod_{i=0}^\infty \Bigl( 1 - \frac{1}{p^{(2i +1) \deg(\beta)}} \Bigr) \nonumber\\
        &=  \frac{1}{p}\frac{1 -1/p}{1+1/p}\prod_{\beta(x)}  \Bigl(1 + \frac{1}{p^{\operatorname{deg(\beta)}}} \Bigr) \prod_{i=0}^\infty \Bigl( 1 - \frac{1}{p^{(2i +1) \deg(\beta)}} \Bigr) \nonumber\\
        &=\frac{1}{p} \frac{1- 1/p}{1 + 1/p}  \lim_{n\to \infty}\bbP\Bigl(\coker(\bM - x\bI ) \textnormal{ satisfies condition }\cD_p^1 \Bigr).\nonumber
    \end{align}
    Here, the final equality used \eqref{eq:CheekyBoy}. 
    Combine \eqref{eq:StormyCity} and \eqref{eq:UnripePug} using that $\cD_p^1$ and $\cD_p^2$ are mutually exclusive, and simplify using that $(1 + p^{-1}(1 -p^{-1})/(1 + p^{-1}))\times (1- p^{-1}) = 1 - p^{-2 }(3p - 1)/(p+1)$  to find \eqref{eq:FuzzyGut}. 

    Finally, \eqref{eq:SillyKey} follows by taking the product of \eqref{eq:FuzzyGut} over all odd primes $p$, and multiplying with the value of \eqref{eq:StormyCity} at $p=2$, using that $1-p^{-1} = (6/7) (1 -p^{-2} (3p-1) /(p+1))$ for $p=2$.   
\end{proof}

\begin{remark}
    Note that while \eqref{eq:FuzzyGut} is also valid for $p=2$, the rephrasing of $p \parallel \Delta_{\bM}$ in \Cref{prop: OddSquaredDividesDiscriminant,prop: ModuleRephrasingDiscriminantConditionTwo} are only valid for odd primes.
    Indeed, $2\mid \Delta_{\phi}$ implies that $2^2 \mid \Delta_{\phi}$ for any monic $\phi \in \bbZ[x]$; see \cite[Proposition 6.7]{ash2007equality}.
    This is why \eqref{eq:BreezyGem} in \Cref{conj: Discriminant} is specific to odd primes. 
\end{remark}

\section{Conclusion}\label{sec: Conclusion}

Prior to this work, the probabilistic understanding of spectral characterization conditions was mostly limited to numerical data, especially in the presence of a symmetry constraint.
We here developed a theoretical framework that involves studying abstract-algebraic objects in analytically tractable profinite random matrix ensembles. 
In particular, this enabled the first specific conjectures on the satisfaction frequency of spectral characterization conditions. 
Further, the rigorous theory of the considered conditions is now reduced to specific but nontrivial technical challenges, such as universality results that would enable extension beyond analytically tractable ensembles.  

The developed framework has potential for application to other sufficient conditions and settings. 
One question in this regard surrounds the exceptional phenomena surrounding the prime $2$, such as the fact that different behavior for $\det(\bW)$ and $\Delta_{\bM}$ then occurs for simple graphs, or if the random vector $\zeta$ is replaced by the all-ones vector. 
Specifically, \Cref{tab: WalkOnesVec,tab: WalkIndicatorVec} suggests that the prediction in \eqref{eq: DetWalkPsquare} for the probability that $p^2$ divides the determinant is universal in terms of the vector $\zeta$, \emph{except} for the all-ones vector.
For the latter, the empirical probabilities agree with those from \Cref{tab: WalkRandVec} and Conjecture \ref{conj: WalkMatrixSquareFree} for odd primes, but a slightly different empirical probability arises when $p=2$. 

In a future work, we intend to pursue extensions of the framework of the present paper to explain such exceptional phenomena. 
This will involve richer algebraic structures and different analytically tractable random matrix models. 
\begin{table}[h!]
    \begin{center}
    \begin{tabular}{ |c|c|c|c|c|c|c|c|c| }
        \hline
        $p^2\nmid \det(\bW)$&$n=8$&$n =10$&$n=12$&$n=15$&$n = 25$&$n=50$&$n=100$& \Cref{conj: WalkMatrixSquareFree}\\
        \hline
        $p=2$&0.414&0.423&0.428&0.430&0.430&0.431&0.431&0.47336955677\ldots\\
        $p=3$&0.556&0.653&0.712&0.748&0.758&0.758&0.757&0.75752129361\ldots\\
        $p=5$&0.608&0.746&0.839&0.898&0.914&0.914&0.914&0.91393033780\ldots \\
        $p=7$&0.622&0.771&0.872&0.939&0.957&0.957&0.957&0.95674525798\ldots\\
        $p=11$&0.630&0.785&0.892&0.963&0.983&0.983&0.983&0.98279431682\ldots\\ 
        \hline
      \end{tabular}
    \end{center}
    \caption{Estimated probability that $p^2\nmid \det(\bW)$ when $\bM$ is the adjacency matrix of a random graph with loops, as in \Cref{conj: WalkMatrixSquareFree}, but $\zeta = (1,\ldots,1)^{\T}$ deterministically.
    For $p=2$, we observe a statistically significant discrepancy with the setting of \Cref{conj: WalkMatrixSquareFree} and \Cref{tab: WalkRandVec} where $\zeta$ was random. 
    These estimates used $10^6$ samples.}
    \label{tab: WalkOnesVec}
\end{table}

\begin{table}[h!]
    \begin{center}
    \begin{tabular}{ |c|c|c|c|c|c|c|c|c| }
        \hline
        $p^2\nmid \det(\bW)$&$n=8$&$n =10$&$n=12$&$n=15$&$n = 25$&$n=50$&$n=100$& \Cref{conj: WalkMatrixSquareFree}\\
        \hline
        $p=2$&0.440&0.459&0.468&0.472&0.473&0.474&0.473&0.47336955677\ldots\\
        $p=3$&0.565&0.657&0.714&0.748&0.757&0.758&0.757&0.75752129361\ldots\\
        $p=5$&0.618&0.657&0.840&0.898&0.913&0.914&0.914&0.91393033780\ldots \\
        $p=7$&0.632&0.773&0.873&0.939&0.957&0.956&0.957&0.95674525798\ldots\\
        $p=11$&0.641&0.788&0.893&0.963&0.983&0.983&0.983&0.98279431682\ldots\\ 
        \hline
      \end{tabular}
    \end{center}
    \caption{Estimated probability that $p^2\nmid \det(\bW)$ when $\bM$ is the adjacency matrix of a random graph with loops but $\zeta = (1,0,\ldots,0)^{\T}$ is the indicator vector of the first coordinate.
    The indicator vector was here arbitrarily chosen as a vector that has different structure from both the all-ones vector and a random vector, thus giving a natural test for what universality could be expected. 
    We observe that the data matches \Cref{conj: WalkMatrixSquareFree}, also for $p=2$. 
    These estimates used $10^6$ samples.}
    \label{tab: WalkIndicatorVec}
\end{table}

\subsection*{Acknowledgements}
Alexander Van Werde is funded by the Deutsche Forschungsgemeinschaft (DFG, German Research Foundation) under Germany's Excellence Strategy EXC 2044/2 –390685587, Mathematics Münster: Dynamics–Geometry–Structure. 

\subsection*{Code}
The code used to produce \Cref{tab: WalkRandVec,tab: Disc,tab: WalkOnesVec,tab: WalkIndicatorVec} can be found at \href{https://github.com/Alexander-Van-Werde/SourceCode-On-the-satisfaction-frequency-of-spectral-characterization-conditions}{\nolinkurl{https://github.com/Alexander-Van-Werde/SourceCode-On-the-satisfaction-frequency-of-spectral-characterization-conditions}}

\bibliographystyle{abbrv}

\newpage 
\appendix

\section{Proof of \texorpdfstring{\Cref{lem: ModuleMpMpbeta}}{Lemma}}\label{apx: proofModuleMpMpbeta}

\begin{proof}
    By the Cayley--Hamilton theorem applied to the finitely generated $\bbZ$-module $\sM$, there exists a non-constant monic polynomial $Q\in \bbZ[x]$ with $Q(x)\sM = 0$ \cite[Proposition 2.4]{atiyah2018introduction}. 
    Consider the factorization $Q \equiv \prod_{i} \beta_i(x)^{e_i} \bmod p$ into powers of coprime irreducible monic polynomials $\beta_i(x)\in \bbF_p[x]$.
    Hensel's lemma \cite[Theorem 4.7.2]{gouvea2020padicintro} then yields monic polynomials $Q_i \in \bbZ_p[x]$ with 
    \begin{align}
        \textstyle Q(x) = \prod_{i} Q_i(x) \qquad \textnormal{ and }\qquad Q_i(x) \equiv \beta_i(x)^{e_i} \bmod p. 
        \label{eqn:pCongruence}
    \end{align}
    The ideals $Q_i\bbZ_{p}[x]$ and $Q_j\bbZ_p[x]$ are coprime for $i\neq j$. 
    Indeed, $\bbF_p[x]/(\beta_i^{e_i}[x] \bbF_p[x] + \beta_j^{e_j}\bbF_p[x]) = 0$ by coprimality of the $\beta_i$ and hence $\bbZ_p[x]/(Q_i\bbZ_p[x] + Q_j\bbZ_p[x]) = 0$ by Nakayama's lemma over $\bbZ_p$  \cite[Proposition 2.6]{atiyah2018introduction}.  
    Hence, $\bbZ_p[x]/Q(x)\bbZ_p[x]\cong \oplus_i \bbZ_p[x]/Q_i(x)\bbZ_p[x]$ by the Chinese remainder theorem.
    Now, using that $Q(x)\sM_p = 0$ by the definition of $Q$,  
    \begin{align}
        \sM_p \cong \sM_p \otimes_{\bbZ_p[x]} \frac{\bbZ_p[x]}{Q(x) \bbZ_p[x]}  \cong \bigoplus_i \Bigl(\sM_{p}\otimes_{\bbZ_p[x]}  \frac{\bbZ_p[x]}{Q_i(x)\bbZ_p[x]} \Bigr).\label{eq:LazyYarn} 
    \end{align}
    We claim that it now remains to prove that 
    \begin{align}
        \bigl(\bbZ_p[x]/Q_i(x)\bbZ_p[x] \bigr)\otimes_{\bbZ_p[x]}R_{p,\beta_j} = \begin{cases}
            \bbZ_p[x]/Q_i(x)\bbZ_p[x] & \textnormal{ if }i=j,\\  
            0 & \textnormal{ else.}
        \end{cases} \label{eq:ZombieWisp}
    \end{align}
    Indeed, using \eqref{eq:ZombieWisp} and $\sM_{p,\beta} = \sM_p\otimes_{\bbZ_p[x]} R_{p,\beta}$ with the associativity in the tensor product in \eqref{eq:LazyYarn} then yields $\sM_p \otimes_{\bbZ_p[x]} \bbZ_p[x]/Q_i \bbZ_p[x] \cong \sM_{p,\beta}$, so that \Cref{lem: ModuleMpMpbeta} indeed follows. 

    We next prove \eqref{eq:ZombieWisp}. 
    Recall the classical fact that $I$-adic completion of a finitely generated module over a Noetherian ring is equivalent to an extension of scalars \cite[Proposition 10.13]{atiyah2018introduction}. 
    That is, if $\hat{\sN}_I$ is the $I$-adic completion of a  finitely generated $R$-module $\sN$ for an ideal $I\subseteq R$ of a Noetherian ring $R$, defined as the inverse limit of the system $\{\sN/I^n\sN:n\geq 1 \}$, then $\hat{\sN}_I \cong \sN \otimes_R \hat{R}_I$ with $\hat{R}_I$ the $I$-adic completion of the ring.   
    Denote $\sQ_i \de \bbZ_p[x]/Q_i(x)\bbZ_p[x]$ and $\mathfrak{m}_{j} = p\bbZ_{p}[x]+ \beta_j(x)\bbZ_{p}[x]$. 
    Then, 
    \begin{align}
        \sQ_i \otimes_{\bbZ_p[x]}R_{p,\beta_j} &\cong \widehat{(\sQ_i)}_{\mathfrak{\mathfrak{m}_j}} =\Bigl\{(q_n)_{n\geq 1}\in \prod_{n\geq 1}\frac{\sQ_i}{\mathfrak{m}_{j}^n \sQ_i}:\forall n,\,  q_{n+1} \equiv q_n \bmod \mathfrak{m}_{j}^n \sQ_i  \Bigr\}. \label{eq:BoldFace}
    \end{align}
    
    Let us start with the case $i\neq j$ in \eqref{eq:ZombieWisp}.
    The assumption that the $\beta_i$ have coprime reductions in $\bbF_p[x]$ implies that that $\beta_i^{e_i}\bbF_p[x] + \beta_j\bbF_p[x] = \bbF_p[x]$. 
    Thus, it follows from $Q_i \equiv \beta_i^{e_i}\bmod p$ that $\sQ_i/\mathfrak{m}_j\sQ_i \cong \bbZ_p[x]/(Q_i\bbZ_p[x] + p\bbZ_p[x] + \beta_j\bbZ_p[x]) \cong \bbF_p[x]/(\beta_i^{e_i}\bbF_p[x] + \beta_j\bbF_p[x]) \cong 0$. 
    This means that $\mathfrak{m}_j \sQ_i = \sQ_i$ and hence $\mathfrak{m}_j^n \sQ_i = \sQ_i$ for all $n\geq 1$.
    Hence, $\sQ_i \otimes_{\bbZ_p[x]}R_{p,\beta_j}\cong 0$ by \eqref{eq:BoldFace}, as desired.

    Now assume that $i=j$.
    The assumption that $Q_i \equiv \beta_i^{e_i}\bmod p$ then implies that $\beta_{i}^{e_i}\sQ_i \subseteq p \sQ_i$. 
    It follows that 
    $
    (\mathfrak{m}_{i})^{e_i} 
    \sQ_i  
    \subseteq
    p  \sQ_i \subseteq
    \mathfrak{m}_{i} \sQ_i
    $
    and hence, for all $n\geq 1$,  
    \begin{align}
        (\mathfrak{m}_{i})^{n e_i} 
    \sQ_i  
    \subseteq
    p^n  \sQ_i \subseteq
    \mathfrak{m}_{i}^n \sQ_i.\label{eq:GhostlyForce}
    \end{align}
    Inclusion of ideal filtrations implies isomorphism of the associated completions \cite[Lemma 7.14]{eisenbud2013commutative}.
    Hence, $\widehat{(\sQ_{i})}_{\mathfrak{m}_i} \cong \widehat{(\sQ_{i})}_{p\bbZ_p[x]}$. 
    Now, using that completion are the same thing as extensions of scalars \cite[Proposition 10.13]{atiyah2018introduction} (recall \eqref{eq:BoldFace}) and using that $\bbZ_p[x]$ is its own $p$-adic completion, 
    \begin{align}
        \sQ_i \otimes_{\bbZ_p[x]}R_{p,\beta_i} \cong \widehat{(\sQ_{i})}_{\mathfrak{m}_i} \cong \widehat{(\sQ_{i})}_{p\bbZ_p[x]} \cong  \sQ_{i}\otimes_{\bbZ_p[x]}\bbZ_p[x]\cong  \sQ_i. 
    \end{align}
    This yields the case $i=j$ in \eqref{eq:ZombieWisp} and hence concludes the proof. 
\end{proof}

\section{Recovering conjectures without symmetry constraint}\label{sec: ConjWithoutSymmetry}
Recall we claimed in \Cref{rem: AlternativeConjectureWithoutSymmetry} that one can also use the profinite Haar method to recover \cite[Conjecture 1.4]{van2025cokernel} concerning walk matrices in a setting without symmetry. 
The conjecture is there stated in a setting with sparse matrices and specifically with odd primes and the all-ones vector, but those complications should not affect the universality class in the setting without symmetries. 
(A universality statement to this effect is given in \cite[Theorem 1.3]{van2025cokernel}.) 
Hence, the key content that we wish to give an alternative justification for is as follows:

\begin{conjecture}\label{conj: WithoutSymmetry} 
    Let $\bM$ be uniformly distributed on $\{0,1 \}^{n\times n}$ and consider an independent uniform random vector $\zeta\in \{0,1 \}^{n\times n}$. 
    Then, $\bW = [\zeta, \bM \zeta, \ldots, \bM^{n-1}\zeta]$ satisfies that for every prime $p$,
    \begin{align}
        \lim_{n\to \infty} \bbP\Bigl(p^2 \nmid \det(\bW) \Bigr) &= \Bigl(1 + \frac{1}{p} \Bigr) \prod_{i=1}^\infty \Bigl(1 -  \frac{1}{p^{i}}\Bigr)
    \end{align}
\end{conjecture}
The difference with the setting of \Cref{conj: WalkMatrixSquareFree} is that the symmetry constraint is now removed.
This makes the sufficient condition \Cref{thm: HaarMainWalk} non-applicable, but the walk matrix itself is naturally still a well-defined object and the rephrasing in \Cref{prop: ModuleRephrasingWalkDet} is still applicable. 

Similar to \Cref{sec: ModelDefinition,sec: ConditionWalk}, we model \Cref{conj: WithoutSymmetry} by studying the satisfaction frequency of condition $\cW_p$ for $\coker(\bM -x\bI, \zeta)$ when $\bM \sim \operatorname{Haar}(\widehat{R}^{n\times n})$ and $\zeta \sim \operatorname{Haar}(\widehat{R}^{n})$.   
By translation invariance and independence, this again reduces to the study of the pieces over the local ring $R_{p,\beta}$.

\subsection{Reduction result}
Let $(\sO,\mathfrak{m})$ be a compact Noetherian local ring and recall that $\operatorname{rank}_q(\cdot)$ refers to the rank of a matrix over $\bbF_q \cong \sO/\mathfrak{m}$. 
It follows from \cite[Theorem 1.1]{fulman2015stein} that a uniform random rectangular matrix $\bU \in \bbF_q^{n\times (n +m)}$ with $m\geq 0$ satisfies that for every $k\leq n$,
\begin{align}
    \biggl\lvert \bbP\bigl(\rank_q(\bU) = n-k \bigr) - \frac{1}{q^{k (m+k)}} \frac{\prod_{i=k+1}^\infty(1-1/q^i) }{\prod_{i=1}^{k+m} (1-1/q^i)} \biggr\rvert \leq \frac{3}{q^n}.\label{eq:GoodOtter} 
\end{align}
The cokernel conditional on the rank is described by the following variant on \Cref{prop: Reduction}:

\begin{proposition}\label{prop: ReductionWithoutSym}
    Suppose that $\bM \sim \operatorname{Haar}(\sO^{n\times n})$. 
    Fix some $k\leq n$ and let $\bK  \sim \operatorname{Haar}(\mathfrak{m}^{k\times k})$. 
    Then, we have an equality in distribution of random $\sO$-modules: 
    \begin{align}
        \bbP\Bigl(\coker\bigl(\bM \bigr)  \in * \mid \rank_{q}\bigl(\bM \bigr) = n-k\Bigr) = \bbP\Bigl(\coker\bigl(\bK\bigr) \in * \Bigr).\label{eq:OldVan}
    \end{align}
\end{proposition}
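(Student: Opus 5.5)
We follow the proof of \Cref{prop: Reduction}, replacing the congruence action $\bM\mapsto \bG^{\T}\bM\bG$ by the two-sided action $\bM\mapsto \bU\bM\bV$ with $\bU,\bV\in\operatorname{GL}_n(\sO)$. Without symmetry this is simpler, since the row and column operations can be chosen independently.

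First we establish a non-symmetric analogue of \Cref{lem: BlockDiagonalDecomposition}. If $\rank_q(\bM)=n-k$, then by Gaussian elimination over $\bbF_q$ there are $\bU_0,\bV_0\in\operatorname{GL}_n(\bbF_q)$ with $\bU_0\bM\bV_0\equiv\operatorname{diag}(\bI_{n-k},0)\bmod\mathfrak{m}$. Lift $\bU_0,\bV_0$ arbitrarily to $\sO$; the lifts are invertible because invertibility is detected modulo $\mathfrak{m}$. Write
\[
\bU_0\bM\bV_0=\begin{pmatrix}\bQ_1&\bD_{1,2}\\ \bD_{2,1}&\bD_{2,2}\end{pmatrix},
\]
where $\bQ_1$ is invertible and $\bD_{1,2},\bD_{2,1},\bD_{2,2}$ have entries in $\mathfrak{m}$. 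Clearing the off-diagonal blocks with a unipotent row operation on the left and a unipotent column operation on the right gives the block diagonal form $\operatorname{diag}(\bQ_1,\bK)$ with Schur complement $\bK=\bD_{2,2}-\bD_{2,1}\bQ_1^{-1}\bD_{1,2}\in\mathfrak{m}^{k\times k}$.

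Second, we need the invariance statement: if $\bDelta\sim\operatorname{Haar}(\mathfrak{m}^{n\times n})$ and $\bU,\bV$ are deterministic and invertible, then $\bU\bDelta\bV$ is again $\operatorname{Haar}(\mathfrak{m}^{n\times n})$. This follows exactly as in \Cref{lem: InvarianceSymHaar}, since translating by $\bS$ corresponds to translating $\bDelta$ by $\bU^{-1}\bS\bV^{-1}\in\mathfrak{m}^{n\times n}$.

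Third, we run the randomization argument. The law of $\bM$ is unchanged by adding an independent $\bDelta\sim\operatorname{Haar}(\mathfrak{m}^{n\times n})$. This addition does not change the rank modulo $\mathfrak{m}$, so it preserves the conditioning event. The matrices $\bU,\bV,\bQ_1,\bK$ are measurable functions of $\bM$. Hence, conditional on $\bM$, the matrix $\bD\de\bU(\bM+\bDelta)\bV-\operatorname{diag}(\bQ_1,\bK)$ is $\operatorname{Haar}(\mathfrak{m}^{n\times n})$ and independent of $\bM$; here we use the invariance lemma conditionally. Clearing blocks again yields the lower-right block $\bK+\bD_{2,2}-\bD_{2,1}(\bQ_1+\bD_{1,1})^{-1}\bD_{1,2}$. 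The block $\bD_{2,2}$ is independent of every other term, so translation invariance of $\operatorname{Haar}(\mathfrak{m}^{k\times k})$ shows this block is $\operatorname{Haar}(\mathfrak{m}^{k\times k})$.

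Finally, $\coker$ is invariant under two-sided multiplication by invertible matrices, and $\coker(\operatorname{diag}(\bQ,\bK))\cong\coker(\bK)$ since $\bQ$ is invertible. This gives \eqref{eq:OldVan}.

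The main point requiring care is the third step. We must check that the choice of $\bU,\bV$ can be made measurably as a function of $\bM$, for instance by fixing a deterministic elimination rule and deterministic lifts. Only then may we apply the invariance conditionally on $\bM$ and conclude that $\bD_{2,2}$ is independent of the remaining terms. This parallels the handling of the same issue in the proof of \Cref{prop: Reduction}.
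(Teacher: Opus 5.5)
Your proposal is correct and follows the paper's own proof: the two-sided analogue of \Cref{lem: BlockDiagonalDecomposition} via the Schur complement, two-sided invariance of $\operatorname{Haar}(\mathfrak{m}^{n\times n})$, and the randomization argument of \Cref{prop: Reduction}, in which the independent block $\bD_{2,2}$ makes the new lower-right block Haar distributed. Your point about choosing $\bU,\bV$ measurably in $\bM$, via a deterministic elimination rule and fixed lifts, is a detail the paper leaves implicit, and you handle it correctly.
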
 
\begin{proof}
    This follows analogously to \Cref{prop: Reduction}. 
    Indeed, similar to \Cref{lem: InvarianceSymHaar} the $\operatorname{Haar}(\mathfrak{a}^{n\times n})$ distribution for any ideal $\mathfrak{a}\subseteq \sO$ is invariant under the map $\bM \mapsto \bG_1 \bM \bG_2$ for any fixed $\bG_1,\bG_2 \in \operatorname{GL}_{n}(\sO)$.
    Further, similar to \Cref{lem: BlockDiagonalDecomposition} a matrix $\bM \in \sO^{n\times n}$ has rank $n-k$ if and only if there exist invertible $\bG_1,\bG_2 \in \operatorname{GL}_n(\sO)$ with $\bG_1 \bM \bG_2 = \operatorname{diag}(\bQ, \bK)$ for some $\bK \in \mathfrak{m}^{k\times k}$ and invertible $\bQ \in \operatorname{GL}_{n-k}(\sO)$.  
    Using the aforementioned invariance similarly to the arguments preceding \eqref{eq:SafeYawn} allows us to assume without loss of generality that $\bK \sim \operatorname{Haar}(\mathfrak{m}^{k\times k})$, which yields \eqref{eq:OldVan}. 
\end{proof}

\begin{corollary}\label{cor: ReductionBoundedWithoutSym}
    Suppose that $\bM_{p,\beta} \sim \operatorname{Haar}(R_{p,\beta}^{n\times n})$. 
    Let $\rank_q(\cdot)$ refer to the rank over $\bbF_q \cong \bbZ[x]/(p\bbZ[x] + \beta(x)\bbZ[x])$. 
    Then, for every $k\leq n$,  as random $R_{p,\beta}$ modules, 
    \begin{align}
        \bbP\Bigl(\coker\bigl(\bM_{p,\beta},\bigr) \in  *  \mid \rank_{q}\bigl(\bM_{p,\beta}\bigr) = n-k\Bigr) = \bbP\Bigl(\coker\bigl(\bK  \bigr) \in  * \Bigr),
    \end{align}
    where $\bK$ is a $k\times k$ matrix with entries Haar distributed on $pR_{p,\beta} + \beta(x) R_{p,\beta}$. 
    That is, 
    \begin{align}
        \bK  \sim  p\operatorname{Haar}(R_{p,\beta}^{k\times k})  + \beta(x) \operatorname{Haar}(R_{p,\beta}^{k\times k}).   
    \end{align}
\end{corollary}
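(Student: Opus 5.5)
This is a direct specialization of \Cref{prop: ReductionWithoutSym}, so I would mirror the proof of \Cref{cor: ReductionBounded}. The plan is to check that $\sO \de R_{p,\beta}$ meets the hypotheses of \Cref{prop: ReductionWithoutSym}, identify its maximal ideal and residue field, and then describe $\operatorname{Haar}(\mathfrak{m}^{k\times k})$ concretely.

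\textbf{Step 1: $R_{p,\beta}$ is a compact Noetherian local ring.} $R_{p,\beta}$ is the $\mathfrak{m}$-adic completion of the Noetherian ring $\bbZ[x]$ at the maximal ideal $\mathfrak{m} = p\bbZ[x]+\beta(x)\bbZ[x]$. Hence it is local by \cite[Proposition 10.16]{atiyah2018introduction} and Noetherian by \cite[Theorem 10.26]{atiyah2018introduction}. Its maximal ideal is $\widehat{\mathfrak{m}} = pR_{p,\beta}+\beta(x)R_{p,\beta}$. Its residue field is $\bbZ[x]/\mathfrak{m}\cong \bbF_p[x]/\beta\bbF_p[x]\cong\bbF_q$ with $q=p^{\deg\beta}$, which is finite. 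Footnote~\ref{foot: Local} then gives compactness. In particular, $\rank_q$ in the corollary agrees with the rank used in \Cref{prop: ReductionWithoutSym}.

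\textbf{Step 2: apply the proposition.} \Cref{prop: ReductionWithoutSym} with $\sO=R_{p,\beta}$ gives the stated equality in distribution, with $\bK\sim\operatorname{Haar}(\widehat{\mathfrak{m}}^{k\times k})$.

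\textbf{Step 3: concrete form of $\bK$.} I would use the remark preceding \Cref{lem: InvarianceSymHaar}: if $a_1,\dots,a_r$ generate an ideal, then $a_1H_1+\dots+a_rH_r$ with independent $H_i\sim\operatorname{Haar}(\sO)$ is Haar on that ideal. Applying this entrywise with generators $p,\beta(x)$ gives $p\bA+\beta(x)\bB$ with $\bA,\bB$ independent $\operatorname{Haar}(R_{p,\beta}^{k\times k})$. Its entries are independent and each is Haar on $\widehat{\mathfrak{m}}$, which is exactly $\operatorname{Haar}(\widehat{\mathfrak{m}}^{k\times k})$.

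To justify the remark, note that the law of $p\bA+\beta\bB$ is invariant under translation by any fixed $p\bS+\beta\bT$, because the laws of $\bA$ and $\bB$ are translation invariant. Uniqueness of the translation-invariant probability measure on the compact group $(\widehat{\mathfrak{m}}^{k\times k},+)$ then identifies the two laws.

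\textbf{Main obstacle.} No step is substantial. The only care point is that $\widehat{\mathfrak{m}}$ is closed in $R_{p,\beta}$, so that it is a compact group and its Haar measure is unique. This holds because every ideal of a Noetherian complete local ring is closed.
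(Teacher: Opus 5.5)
Your proposal is correct and follows the paper's route. The paper's proof is the one-line specialization of \Cref{prop: ReductionWithoutSym} to $\sO = R_{p,\beta}$, resting (as spelled out in the proof of \Cref{cor: ReductionBounded}) on $R_{p,\beta}$ being a compact Noetherian local ring with maximal ideal $pR_{p,\beta}+\beta(x)R_{p,\beta}$. Your Steps 1--3 make those checks explicit, and your translation-invariance argument for $\bK \sim p\operatorname{Haar}(R_{p,\beta}^{k\times k})+\beta(x)\operatorname{Haar}(R_{p,\beta}^{k\times k})$ justifies the remark preceding \Cref{lem: InvarianceSymHaar}, which the paper uses without proof.
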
    
\begin{proof}
    This is immediate from \Cref{prop: ReductionWithoutSym} with $\sO = R_{p,\beta}$. 
\end{proof}

\subsection{Satisfaction frequency of condition \texorpdfstring{$\cW_p$}{Wp}}
Let $\bM_{p,\beta} \sim \operatorname{Haar}(R^{n\times n}_{p,\beta})$ and $\zeta_{p,\beta} \sim \operatorname{Haar}(R^n_{p,\beta})$. 
The following \Cref{lem: TrivialCoker_pbetaWithoutSym,lem: 1D-cokernelWithoutSym} replace the role of \Cref{lem: TrivialCoker_pbeta,lem: 1D-cokernel}.

\begin{lemma}\label{lem: TrivialCoker_pbetaWithoutSym}
    Let $q \de p^{\operatorname{deg}(\beta)}$. 
    Then, for every $n\geq 1$, 
    \begin{align}
       \Bigl\lvert\, \bbP\bigl( \coker(\bM_{p,\beta}, \zeta_{p,\beta}) \cong 0\bigr) -  \prod_{i=2}^\infty\Bigl(1- \frac{1}{q^i}\Bigr) \,  \Bigr\rvert \leq  \frac{3}{q^n}. \label{eq:TediousPop2}
    \end{align}
\end{lemma}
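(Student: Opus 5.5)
The argument mirrors the proof of \Cref{lem: TrivialCoker_pbeta}. The augmented matrix $[\bM_{p,\beta},\zeta_{p,\beta}]$ is now an $n\times(n+1)$ matrix with independent Haar entries, with no symmetry constraint at all. So I will not condition on the rank of $\bM_{p,\beta}$. Instead I will work directly with the rectangular matrix and use the rectangular rank estimate \eqref{eq:GoodOtter}.

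First, apply Nakayama's lemma \cite[Proposition 2.6]{atiyah2018introduction} over the local ring $R_{p,\beta}$. This gives $\coker(\bM_{p,\beta},\zeta_{p,\beta})\cong 0$ if and only if the reduction of $[\bM_{p,\beta},\zeta_{p,\beta}]$ modulo $\widehat{\mathfrak{m}} = pR_{p,\beta}+\beta(x)R_{p,\beta}$ has full rank $n$ over $\bbF_q$.

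Next, recall from \eqref{eq: Def_HaarProperty} and \eqref{eq: HaarElementZpbeta} that Haar measure on $R_{p,\beta}$ pushes forward to the uniform law on $R_{p,\beta}/\widehat{\mathfrak{m}}\cong\bbF_q$. Since the entries are independent, this reduction is a uniform random matrix $\bU\in\bbF_q^{n\times(n+1)}$.

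Finally, apply \eqref{eq:GoodOtter} with $m=1$ and $k=0$. It gives
\begin{align}
    \Bigl\lvert \bbP\bigl(\rank_q(\bU)=n\bigr) - \frac{\prod_{i=1}^\infty(1-q^{-i})}{1-q^{-1}}\Bigr\rvert \leq \frac{3}{q^n}.
\end{align}
The ratio simplifies to $\prod_{i\geq 2}(1-q^{-i})$, which yields \eqref{eq:TediousPop2}.

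There is no real obstacle here. The only point needing care is the identification of the reduction with a uniform matrix over $\bbF_q$ together with the Nakayama step, and both are already used verbatim in \Cref{lem: TrivialCoker_pbeta}. As a sanity check independent of \eqref{eq:GoodOtter}, the exact count is also elementary. The $n$ rows of $\bU$ are independent uniform vectors in $\bbF_q^{n+1}$, so the full-rank probability is $\prod_{j=0}^{n-1}(1-q^{j-(n+1)})=\prod_{i=2}^{n+1}(1-q^{-i})$. This differs from the infinite product by at most $\sum_{i>n+1}q^{-i}\leq 2q^{-(n+1)}$, which is comfortably within $3/q^n$.
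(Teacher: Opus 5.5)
Your proposal is correct and follows the paper's proof: Nakayama's lemma turns the event into full rank $n$ of the reduction of $[\bM_{p,\beta},\zeta_{p,\beta}]$, which is a uniform matrix in $\bbF_q^{n\times(n+1)}$, and then \eqref{eq:GoodOtter} with $m=1$, $k=0$ gives the bound. Your row-by-row count, $\prod_{i=2}^{n+1}(1-q^{-i})$, is also correct and gives a self-contained proof with an even sharper error term, independent of the cited rank estimate.
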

\begin{proof}
    Let $\bbF_q \de \bbF_{p}[x]/\beta(x) \bbF_p[x]$. 
    Then, 
    \begin{align}
        \bbP\bigl( \coker(\bM_{p,\beta}, \zeta_{p,\beta}) \cong 0\bigr) = \bbP\bigl(\operatorname{rank}_q(\bM_{p,\beta}, \zeta_{p,\beta}) = n \bigr).  
    \end{align}
    Applying \eqref{eq:GoodOtter} with $\bU = [\bM_{p,\beta}, \zeta_{p,\beta}] \bmod p,\beta$ by taking $m=1$ and $k=0$ yields \eqref{eq:TediousPop2}.  
\end{proof}

\begin{lemma}\label{lem: 1D-cokernelWithoutSym}
    Let $q \de p^{\deg(\beta)}$.
    Then, every $n\geq 2$, 
    \begin{align}
          \biggl\lvert\, \bbP\biggl(\coker\bigl(\bM_{p,\beta},\zeta_{p,\beta}\bigr)    \cong \frac{\bbF_{p}[x]}{\beta(x)\bbF_{p}[x]}\biggr) - \frac{1}{q^2} \prod_{i=2}^\infty\Bigl(1-\frac{1}{q^i}\Bigr)   \,  \biggr\rvert \leq \frac{6}{q^n}. \label{eq:SafeKee2}
    \end{align}
\end{lemma}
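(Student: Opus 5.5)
I will mirror the proof of \Cref{lem: 1D-cokernel}, with \Cref{cor: ReductionBoundedWithoutSym} in place of \Cref{cor: ReductionBounded} and \eqref{eq:GoodOtter} in place of \eqref{eq:NoisyHut}. Write $\bbF_q \de \bbF_p[x]/\beta(x)\bbF_p[x]$. The event $\coker(\bM_{p,\beta},\zeta_{p,\beta})\cong \bbF_q$ forces $[\bM_{p,\beta},\zeta_{p,\beta}]$ to have rank $n-1$ over $\bbF_q$, hence $\rank_q(\bM_{p,\beta})\in\{n-1,n-2\}$. By the law of total probability, the target probability splits into two terms, each a conditional probability times $\bbP(\rank_q(\bM_{p,\beta})=n-k)$ for $k=1,2$. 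I will estimate the rank probabilities with \eqref{eq:GoodOtter} at $m=0$, so that the limiting values are $q^{-k^2}\prod_{i\geq k+1}(1-q^{-i})/\prod_{i=1}^{k}(1-q^{-i})$ with error at most $3/q^n$.

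\emph{Rank $n-1$.} By \Cref{cor: ReductionBoundedWithoutSym}, the module $\coker(\bM_{p,\beta},\zeta_{p,\beta})$ is distributed as $\coker(k_1,z)$, where $k_1\sim\operatorname{Haar}(\widehat{\mathfrak m})$ and $z\sim\operatorname{Haar}(R_{p,\beta})$ are independent. Exactly as in \eqref{eq:FunBox}--\eqref{eq:DizzyCrow}, we need $z\in\widehat{\mathfrak m}$, which has probability $1/q$. Then $(k_1,z)$ must generate $\widehat{\mathfrak m}$, which by Nakayama has probability $(1-q^{-2})(1-q^{-1})$. The conditional probability is therefore $q^{-1}(1-q^{-1})(1-q^{-2})$.

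\emph{Rank $n-2$.} Here the module is distributed as $\coker(\bK,Z)$, with $\bK\sim\operatorname{Haar}(\widehat{\mathfrak m}^{2\times 2})$ and $Z\sim\operatorname{Haar}(R_{p,\beta}^2)$. We need $Z\not\equiv 0$, which has probability $1-q^{-2}$. On that event, choose $\bG\in\operatorname{GL}_2$ with $\bG Z=(1,0)^{\T}$. The left-right invariance of $\operatorname{Haar}(\widehat{\mathfrak m}^{2\times 2})$ from the proof of \Cref{prop: ReductionWithoutSym} shows that $\tilde\bK=\bG\bK$ is again Haar. Then $\coker(\bK,Z)\cong\coker(\tilde\bK_{2,1},\tilde\bK_{2,2})$, whose entries are two independent Haar elements of $\widehat{\mathfrak m}$. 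This gives probability $(1-q^{-2})(1-q^{-1})$, so the conditional probability is $(1-q^{-2})^2(1-q^{-1})$. In contrast to the symmetric case, only a left multiplication is needed here, so no transpose compatibility has to be checked.

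\emph{Combining.} Write $P_\infty\de\prod_{i\geq 2}(1-q^{-i})$. The $k=1$ limiting rank probability is $q^{-1}P_\infty/(1-q^{-1})$, and the $k=2$ one is $q^{-4}\prod_{i\ge3}(1-q^{-i})/((1-q^{-1})(1-q^{-2}))$. The limit is then
\[
q^{-2}(1-q^{-2})P_\infty+q^{-4}(1-q^{-2})\frac{P_\infty}{1-q^{-2}}=q^{-2}P_\infty\bigl(1-q^{-2}+q^{-2}\bigr)=q^{-2}P_\infty,
\]
matching \eqref{eq:SafeKee2}. The error is at most $3/q^n+3/q^n=6/q^n$, since both conditional probabilities are at most $1$. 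The only genuinely delicate step is the rank-$(n-2)$ reduction, namely justifying that $\bG$ depends only on $Z$, which is independent of $\bK$, so that invariance applies conditionally. Everything else is the algebraic simplification above.
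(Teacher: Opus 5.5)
Your proposal is correct and follows the paper's proof essentially step for step. You use the same split over $\operatorname{rank}_q(\bM_{p,\beta})\in\{n-1,n-2\}$, obtain the same conditional probabilities $q^{-1}(1-q^{-1})(1-q^{-2})$ and $(1-q^{-2})^2(1-q^{-1})$ via \Cref{cor: ReductionBoundedWithoutSym} and Nakayama, and combine them with \eqref{eq:GoodOtter} at $m=0$, $k\in\{1,2\}$, as the paper does; your left multiplication $\tilde{\bK}=\bG\bK$ is the intended one (the paper's $\bQ\bK$ is a typo), and your simplification to $q^{-2}\prod_{i\geq 2}(1-q^{-i})$ with total error $6/q^n$ is right.
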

\begin{proof}
    
    Let $\bbF_q \de \bbF_{p}[x]/\beta(x)\bbF_{p}[x]$. 
    By the law of total probability, 
    \begin{align}
        \bbP\bigl(\coker\bigl(\bM_{p,\beta},{}&{}\zeta_{p,\beta}\bigr)    \cong  \bbF_q\bigr) \label{eq:BlueKey2}\\ 
        & = \bbP\bigl(\coker\bigl(\bM_{p,\beta},\zeta_{p,\beta}\bigr)    \cong\bbF_q\ \Big\vert \   \rank_q(\bM_{p,\beta}) = n-1\bigr) \bbP\bigl( \rank_q(\bM_{p,\beta}) = n-1\bigr) \nonumber\\ 
        & \ \ +  \bbP\bigl(\coker\bigl(\bM_{p,\beta},\zeta_{p,\beta}\bigr)    \cong\bbF_q \ \Big\vert \   \rank_q(\bM_{p,\beta}) = n-2\bigr) \bbP\bigl( \rank_q(\bM_{p,\beta}) = n-2\bigr).\nonumber
    \end{align}
    
   The probability conditional on rank $n-1$ can be estimated as in \eqref{eq:FunBox}--\eqref{eq:DizzyCrow} by using \Cref{cor: ReductionBoundedWithoutSym} instead of \Cref{cor: ReductionBounded}. 
   The arguments are entirely unchanged because symmetry constraints had no effect in the one-dimensional computations involved. 
   In particular, we again find that 
    \begin{align}
         \bbP\bigl(\coker\bigl(\bM_{p,\beta(x)},\zeta_{p,\beta}\bigr)    \cong \bbF_q  \mid   \rank_q(\bM_{p,\beta}) = n-1\bigr) = q^{-1} \bigl(1 - q^{-2}\bigr) \bigl(1-q^{-1} \bigr).\label{eq:DizzyCrow2} 
    \end{align}

   Conditional on rank $n-2$,  \Cref{cor: ReductionBoundedWithoutSym} yields that $\coker(\bM_{p,\beta})$ then has the same distribution as $\coker(\bK)$ with $\bK$ a $2\times 2$ matrix with entries Haar distributed on $ pR_{p,\beta} + \beta(x)R_{p,\beta}$. 
    Then, $\coker(\bM_{p,\beta},\zeta_{p,\beta})$ has the same distribution as $\coker(\bK, Z)$ with $Z\sim \operatorname{Haar}(R_{p,\beta}^2)$. 
    Here, using that $Z$ reduces to zero in $\bbF_q^2$ with probability $1/q^{2}$ as in \eqref{eq:PaleTea},  
    \begin{align}
         \bbP\bigl(\coker\bigl(\bM_{p,\beta},\zeta_{p,\beta}{}&{}\bigr)    \cong\bbF_q  \mid    \rank_q(\bM_{p,\beta}) = n-2\bigr)\label{eq:PaleTea2} \\ 
         &= \bigl( 1 - q^{-2}\bigr) \bbP\bigl( \coker(\bK , Z) \cong \bbF_q  \mid Z\not\equiv 0 \bmod p R_{p,\beta} + \beta(x) R_{p,\beta}  \bigr).\nonumber 
    \end{align}
   That $Z$ has nonzero reduction is equivalent to the existence of  $\bG \in \operatorname{GL}_2(R_{p,\beta})$ with $\bG Z = (1,0)^{\T}$.     
   The invariance of $\operatorname{Haar}(\mathfrak{m}_{p,\beta}^{2\times 2})$ here implies that $\tilde{\bK}=\bQ \bK$ has the same distribution as $\bK$. 
    Further,  
    \begin{equation}
        \coker(\bK, Z) \cong \coker(\bQ \bK , \bQ Z)  
        = \coker\begin{pmatrix}
            \tilde{\bK}_{1,1} & \tilde{\bK}_{1,2} & 1 \\ 
            \tilde{\bK}_{2,1} & \tilde{\bK}_{2,2} & 0
        \end{pmatrix} 
        \cong  \coker(\tilde{\bK}_{2,1},\, \tilde{\bK}_{2,2}).  
    \end{equation}
    From here on, the arguments are again identical to those for \Cref{lem: 1D-cokernel}. 
    We thus again find that 
    \begin{align}
        \bbP\bigl(\coker\bigl(\bM_{p,\beta}, \zeta_{p,\beta}\bigr)    \cong  \bbF_q\bigr) ={}&{}  q^{-1} \bigl(1 - q^{-2}\bigr) \bigl(1-q^{-1} \bigr) \bbP\bigl( \rank_q(\bM_{p,\beta}) = n-1\bigr) \label{eq:OccultDew2}  \\ 
        & \quad +  \bigl( 1 - q^{-2} \bigr)^2 \bigl( 1 - q^{-1}\bigr)\bbP\bigl( \rank_q(\bM_{p,\beta}) = n-2\bigr). \nonumber 
    \end{align}
    The result \eqref{eq:SafeKee2} now follows from \eqref{eq:GoodOtter} with $m=0$ and $k\in \{1,2\}$ by simplifying prefactors, since
    \begin{equation}
         \frac{(1-1/q^{2}) (1-1/q)}{q} \frac{\prod_{i=2}^\infty(1-1/q^i) }{q(1-1/q)} +\frac{(1-1/q)(1-1/q^{2})^2}{q^{4}} \frac{\prod_{i=3}^\infty(1-1/q^i) }{\prod_{i=1}^{2} (1-1/q^i)}  = \frac{1}{q^2} \prod_{i=2}^\infty\Bigl(1-\frac{1}{q^i}\Bigr).  
    \end{equation}
\end{proof}

Recall \Cref{def: ConditionW} concerning condition $\cW_p$. 
Combining \Cref{lem: TrivialCoker_pbetaWithoutSym,lem: 1D-cokernelWithoutSym} yields the satisfaction frequency of this condition in the absence of a symmetry constraint. 
In particular, this provides an alternative justification for \Cref{conj: WithoutSymmetry}. 

\begin{theorem}\label{thm: HaarWalk2}
    Let $\bM\sim \operatorname{Haar}(\widehat{R}^{n\times n})$ and $\zeta \sim \operatorname{Haar}(\widehat{R}^n)$. 
    Then, for every fixed prime $p$, 
    \begin{align}
       \lim_{n\to \infty}\bbP\Bigl(\coker(\bM-x\bI, \zeta) \textnormal{ satisfies condition }\cW_p \Bigr) = \Bigl(1 + \frac{1}{p}\Bigr) \prod_{i=1}^\infty \Bigl(1 - \frac{1}{p^{i}}\Bigr).\label{eq:SlowWisp2} 
    \end{align}
\end{theorem}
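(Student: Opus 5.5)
The argument follows the proof of \Cref{thm: HaarMainWalk} verbatim, with \Cref{lem: TrivialCoker_pbetaWithoutSym,lem: 1D-cokernelWithoutSym} in place of \Cref{lem: TrivialCoker_pbeta,lem: 1D-cokernel}. The entries of $\bM$ are now fully independent Haar elements. Translation invariance and the decomposition $\widehat{R}\cong\prod_{p,\beta}R_{p,\beta}$ therefore still show that the pieces $\coker(\bM-x\bI,\zeta)\otimes_{\widehat{R}}R_{p,\beta}$ are independent across $\beta$, each distributed as $\coker(\bM_{p,\beta},\zeta_{p,\beta})$ with $\bM_{p,\beta}\sim\operatorname{Haar}(R_{p,\beta}^{n\times n})$.

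Set $P_n(p,\beta)$ and $Q_n(p,a)$ as in the proof of \Cref{thm: HaarMainWalk}. The events (W1) and (W2) are disjoint, and (W2) splits according to $a\bmod p$. Hence the probability that $\cW_p$ holds equals
\[
\prod_\beta P_n(p,\beta)+\sum_{a=0}^{p-1}Q_n(p,a)\prod_{\beta\not\equiv x-a}P_n(p,\beta),
\]
exactly as in \eqref{eq:AncientVine}.

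For the formal limit, \Cref{lem: TrivialCoker_pbetaWithoutSym} gives $P_\infty(p,\beta)=\prod_{i\geq2}(1-p^{-i\deg\beta})$. Interchanging the products and applying \Cref{lem: Zeta} with $s=i$ yields
\[
\prod_\beta P_\infty(p,\beta)=\prod_{i\geq2}\bigl(1-p^{-(i-1)}\bigr)=\prod_{i\geq1}\bigl(1-p^{-i}\bigr).
\]
Comparing \Cref{lem: TrivialCoker_pbetaWithoutSym,lem: 1D-cokernelWithoutSym} gives $Q_\infty(p,a)=p^{-2}P_\infty(p,x-a)$. The sum over the $p$ choices of $a$ therefore contributes $p\cdot p^{-2}\prod_\beta P_\infty=p^{-1}\prod_\beta P_\infty$. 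Adding the two contributions gives $(1+p^{-1})\prod_{i\geq1}(1-p^{-i})$, which is \eqref{eq:SlowWisp2}.

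The main point of care is exchanging $\lim_n$ with the infinite product over $\beta$. I would copy the argument of \eqref{eq:HugeLog}--\eqref{eq:SnappyDen}. First, $P_\infty(p,\beta)\geq P_\infty(2,x)=\prod_{i\geq2}(1-2^{-i})>0$ is bounded below uniformly. Then the error bounds $3/q^n$ and $6/q^n$ translate into relative errors $|\mathfrak p_n|\leq c\,p^{-n\deg\beta}$ and $|\mathfrak q_n|\leq c\,p^{-n}$. Since there are at most $p^d$ monic irreducibles of degree $d$, the product of the factors $(1\pm c\,p^{-dn})^{p^d}$ over $d\geq1$ tends to $1$ as $n\to\infty$. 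Because only a single prime is involved, the zeta-function step from that proof is not needed.
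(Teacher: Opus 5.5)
Your proposal is correct and follows the paper's proof. You use the same split into (W1) and the $p$ disjoint (W2) events, take the limiting values from \Cref{lem: TrivialCoker_pbetaWithoutSym,lem: 1D-cokernelWithoutSym} (including $Q_\infty(p,a)=p^{-2}P_\infty(p,x-a)$), and finish with \Cref{lem: Zeta}. The paper omits the exchange of limit and product as routine; you supply it by adapting the estimates from the proof of \Cref{thm: HaarMainWalk}, and you rightly note that the zeta-function step over primes is unnecessary for a single fixed prime.
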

\begin{proof}
    That limits may be exchanged with infinite products here follows similarly to the proof of \Cref{thm: HaarMainWalk}, so we omit these details for brevity.
    Using the translation invariance of the Haar distribution, studying $\coker(\bM-x\bI, \zeta)$ may be reduced to modules of the form $\coker(\bM_{p,\beta}, \zeta_{p,\beta})$.  
    Hence, using that (W1) and (W2) are mutually exclusive in \Cref{def: ConditionW} and subdividing the case (W2) by the $p$ possibilities for $a$ and using independence, it follows from \Cref{lem: TrivialCoker_pbetaWithoutSym,lem: 1D-cokernelWithoutSym} that 
    \begin{align}
        \lim_{n\to \infty}{}&{}\bbP\Bigl(\coker(\bM - x\bI, \zeta ) \textnormal{ satisfies condition }\cW_p \Bigr)  \\ 
        &=\prod_{\beta(x)}  \prod_{i=2}^\infty \Bigl( 1 - \frac{1}{p^{i \deg(\beta)}} \Bigr) + \sum_{a=0}^{p-1}\frac{1}{p^2} \prod_{\beta(x)}  \prod_{i=2}^\infty \Bigl( 1 - \frac{1}{p^{i \deg(\beta)}} \Bigr) = \Bigl( 1 + \frac{1}{p} \Bigr) \prod_{\beta(x)}  \prod_{i=2}^\infty \Bigl( 1 - \frac{1}{p^{i \deg(\beta)}} \Bigr).\nonumber 
    \end{align}
    Use \Cref{lem: Zeta} to conclude. 
\end{proof}

\end{document}